\newcolumntype{d}[1]{D{.}{.}{#1}}
\newtheorem{theorem}{Theorem}[section]
\newtheorem{proposition}[theorem]{Proposition}
\theoremstyle{definition}
\newtheorem{remark}[theorem]{Remark}
\newcommand{\mydef}[1]{{\color{blue} #1}}
\newcommand{\trop}{\textrm{trop}}
\newcommand{\C}{\mathbb{C}}
\newcommand{\R}{\mathbb{R}}
\newcommand{\Z}{\mathbb{Z}}
\newtheorem{example}[theorem]{Example}
\newcommand{\bR}{\mathbb{R}}
\newcommand{\bZ}{\mathbb{Z}}
\newcommand{\bQ}{\mathbb{Q}}
\newcommand{\ostar}{\mathbin{\mathpalette\make@circled\star}}
\newcommand{\make@circled}[2]{%
  \ooalign{$\m@th#1\smallbigcirc{#1}$\cr\hidewidth$\m@th#1#2$\hidewidth\cr}%
}
\newcommand{\smallbigcirc}[1]{%
  \vcenter{\hbox{\scalebox{0.77778}{$\m@th#1\bigcirc$}}}%
}
\title{Max-convolution through numerics and tropical geometry}
\author{Taylor Brysiewicz}
\address{Western University, Department of Mathematics, London, ON, N6A 5B7
Canada}
\email{tbrysiew@uwo.ca}
\author{Jonathan D. Hauenstein}
\address{University of Notre Dame, Department of Applied and Computational Mathematics and Statistics, Notre Dame, IN 46556 USA}
\email{hauenstein@nd.edu}
\author{Caroline Hills}
\address{University of Notre Dame, Department of Applied and Computational Mathematics and Statistics, Notre Dame, IN 46556 USA}
\email{chills1@nd.edu}
\begin{document}

\begin{abstract}
The maximum function, on vectors of real numbers, is not differentiable. Consequently, several differentiable approximations of this function 
are popular substitutes.
We survey three smooth functions which approximate the maximum function and analyze their convergence rates. We interpret these functions through the lens of tropical geometry, where their performance differences are geometrically salient. As an application, we provide an algorithm which computes the max-convolution of two integer vectors in quasi-linear time. We show this algorithm's power in computing adjacent sums within a vector as well as computing service curves in a network analysis application.
\end{abstract}

\maketitle

\section{Introduction}
Given $v=(v_1,\dots,v_n)\in\bR^n$,
although computing
the maximum $\mydef{M} = \max_{1\leq i\leq n} v_i$ is an elementary task, the function $v \mapsto \max(v)$ is not differentiable.
A common technique
used in optimization~\cite{SmoothOpt,SmoothOpt2}
and machine learning~\cite{Softmax1,Softmax2}
is to replace
the precise computation of $M$ with an approximate computation.  This article investigates three standard ways to smoothly approximate
the maximum function.
Equipped with
$$\mydef{F_v(t)} = \sum_{j=1}^n t^{v_j},
~~\mydef{L_v(t)} = \log_t(F_v(t)),~~\mydef{R_v(t)} = \dfrac{tF_v'(t)}{F_v(t)},
~~\hbox{and}~~
\mydef{||v||_p} = \left(\sum_{j=1}^n |v_j|^p\right)^{\frac{1}{p}},$$
we consider the approximations:
\[
\mydef{\text{(LogSumExp)}}:  \quad \quad M = \displaystyle\lim_{t \to \infty} L_v(t) \quad \quad \quad \quad \quad \quad \quad 
\mydef{\text{(Ratio)}}:  \quad \quad M = \displaystyle\lim_{t \to \infty} R_v(t)
\]
and, if each entry of $v$ is non-negative,
\[
\mydef{\text{($p$-norm)}}: \quad \quad M= \displaystyle\lim_{p \to \infty} ||v||_p = ||v||_\infty.
\]
We drop the subscript $v$ when the vector of interest is clear from context.

\begin{figure}[!htpb]
\includegraphics[scale=0.063]{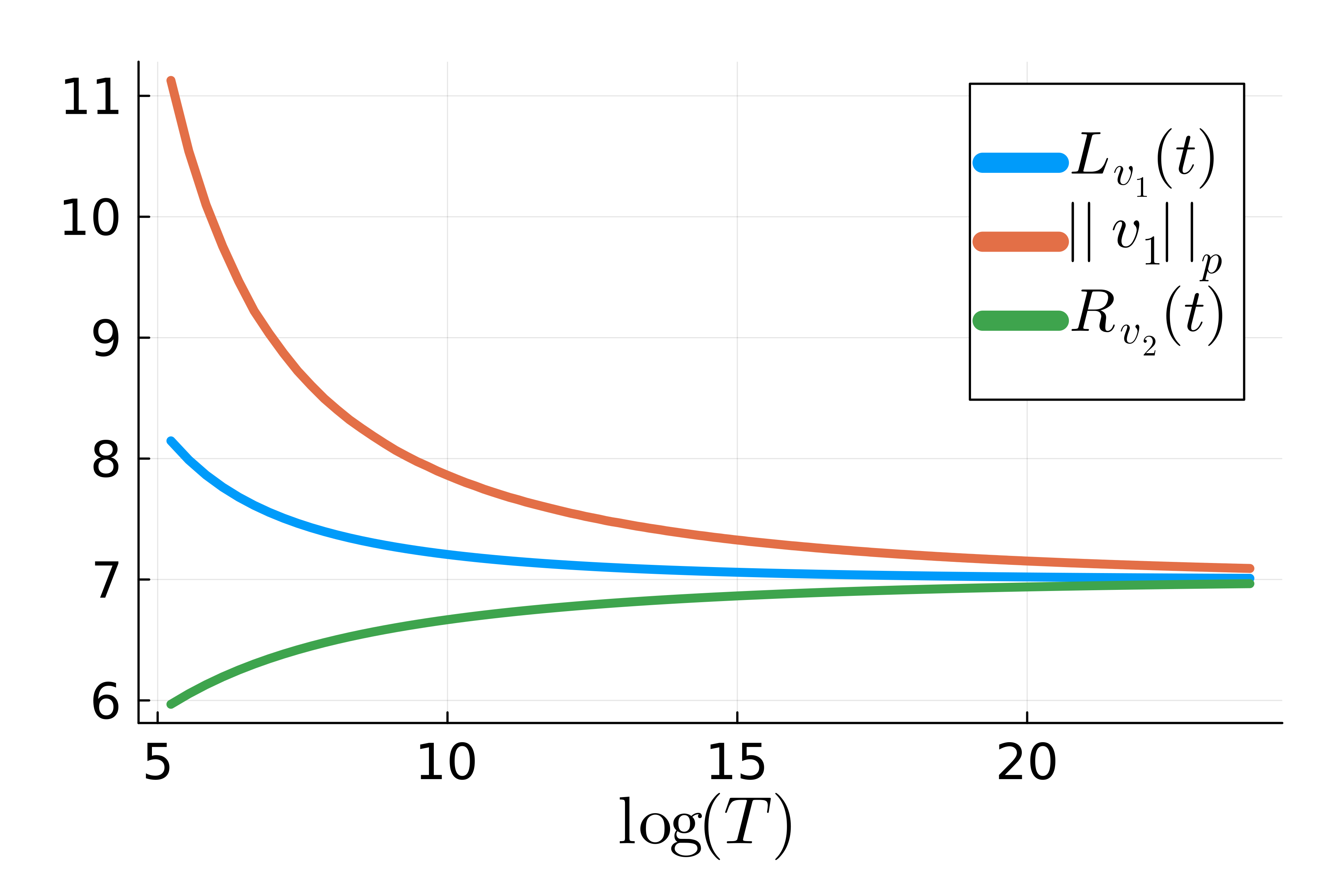}
\includegraphics[scale=0.063]{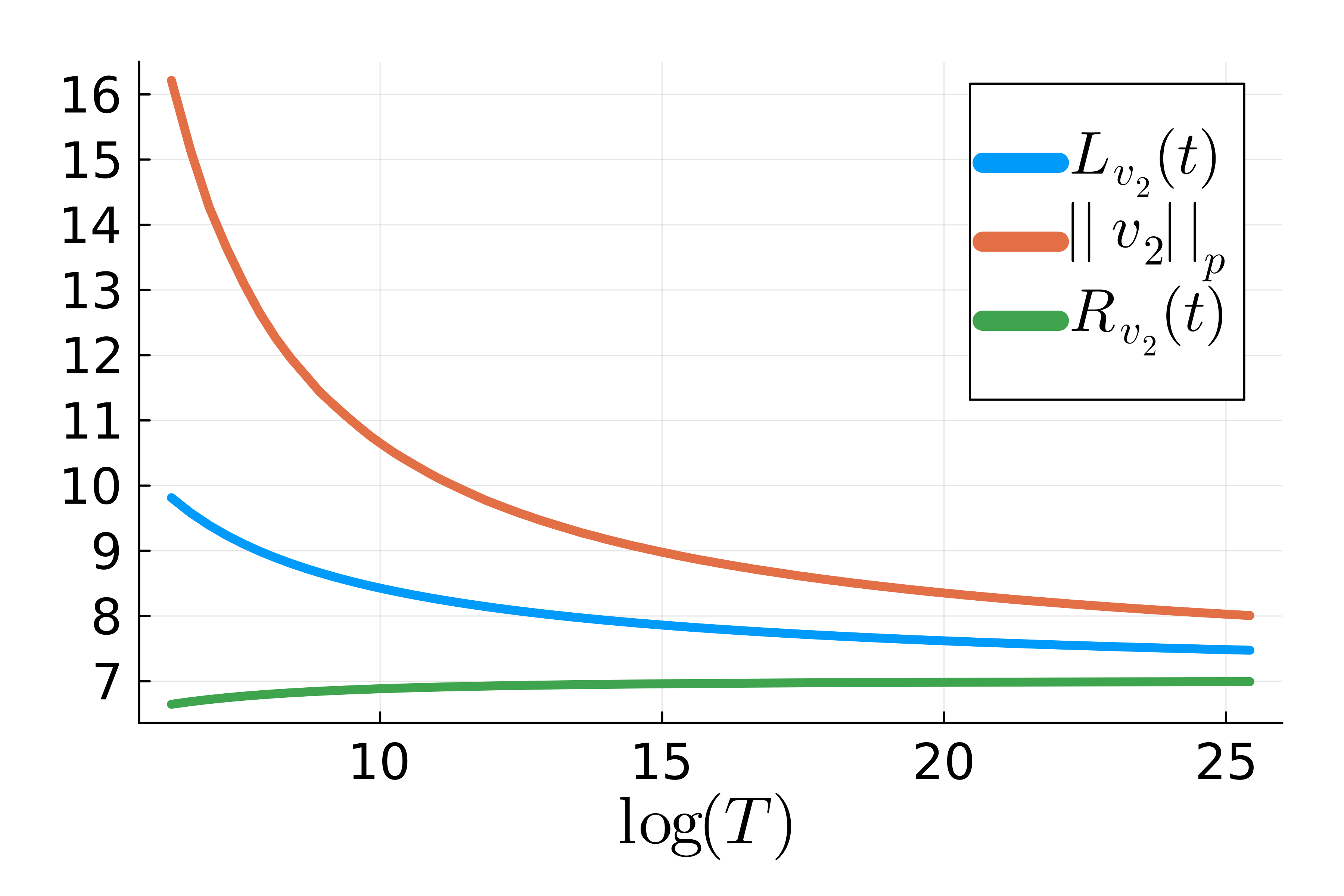}
\caption{Plots of the values of the smooth approximations $L(t),||v||_{p}$, and $R(t)$ for the vectors $v_1=(1,2,3,4,5,6,7)$ and $v_2=(1,2,3,4,5,6,7,7,7,7,7,7)$. These values are plotted against the natural logarithm of the largest absolute value $T$ of a floating point number involved in the numerical evalutation of each function.}
\end{figure}

When $t>0$, the functions
$L_v(t)$ and $R_v(t)$ are smooth
as a function of $v$
and approximate $M$
as shown by the limits above.
For $p \in \mathbb{R}_{>0}$, the function $||v||_p$ smoothly approximates $M$ provided that each element of $v$ is non-negative.
Each of these functions can be
expressed in terms of
\begin{equation}\label{eq:Lv}
\mydef{\mathcal L_v(t)} = \log(F_v(t)).
\end{equation}

\begin{proposition} \label{prop:Lexpressions}
For $v=(v_1,\ldots,v_n) \in \bR^n$, $t \in \bR$
with $t> 1$, we have $\mydef{u} = \log(t)>0$, and
\begin{align}
L_v(t) = L_v(e^u) &= \frac{1}{u}\mathcal L_v(e^u) \\
\label{eq:Rv}
R_v(t) = R_v(e^u) &= \frac{d}{d u} \mathcal L_v(e^u) \\
||v||_{\log(t)} = ||v||_{u} &= e^{L_{\log|v|}(t)}=e^{\frac{1}{u} \mathcal L_{\log|v|}(e^u)}
\end{align}
where $\mydef{\log |\cdot|}:\R^n \to \R^n$ is the componentwise log-absolute value map.
\end{proposition}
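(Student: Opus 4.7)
The plan is to verify each of the three identities by direct unpacking of the definitions of $F_v$, $\mathcal L_v$, $L_v$, $R_v$, and $\|\cdot\|_p$. The content of the proposition is essentially bookkeeping: once one adopts the substitution $t = e^u$, all three smoothing operations become natural transforms of the single function $u \mapsto \mathcal L_v(e^u)$, and the proof amounts to the change-of-base formula for logarithms together with the chain rule.

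First I would establish the LogSumExp identity. Using $\log_t(x) = \log(x)/\log(t)$ with $x = F_v(t)$ and $u = \log(t)$ yields $L_v(t) = \mathcal L_v(t)/u$; substituting $t = e^u$ gives the claimed formula. Next I would handle the Ratio identity by applying the chain rule to $\mathcal L_v(e^u) = \log F_v(e^u)$, producing $\frac{e^u F_v'(e^u)}{F_v(e^u)}$, which is precisely $R_v(t)$ after undoing $t = e^u$.

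For the $p$-norm identity I would take $p = u = \log(t)$ and rewrite each summand as $|v_j|^p = e^{p\log|v_j|} = t^{\log|v_j|}$, so that $\sum_j |v_j|^p = F_{\log|v|}(t)$. Raising to the $1/u$ power then gives $\|v\|_u = e^{\mathcal L_{\log|v|}(e^u)/u}$, and the remaining equality with $e^{L_{\log|v|}(t)}$ is just the previously-established LogSumExp identity applied to the vector $\log|v|$ in place of $v$.

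I do not foresee any substantive obstacle; the proposition is entirely formal. The only mild subtlety is the behavior of the $\log|\cdot|$ map when some $v_j = 0$, but under the standing convention $0^p = 0$ (equivalently $t^{-\infty} = 0$) the identities persist without special treatment.
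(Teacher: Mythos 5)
Your proof is correct and complete. The paper states this proposition without proof (it is treated as an immediate consequence of the definitions), and your argument fills that gap in the natural way: change of base for the $L_v$ identity, chain rule for the $R_v$ identity, and the rewriting $|v_j|^u = t^{\log|v_j|}$ followed by an application of the first identity to $\log|v|$ for the $p$-norm case. Your closing remark about the $v_j = 0$ convention is a sensible clarification that the paper leaves implicit.
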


After providing some basic notation
in Section~\ref{sec:Basic}, we derive convergence rates for each of these functions in
Section~\ref{sec:Approximations} by analyzing~$\mathcal L_v(t)$. Namely, for $\delta>0$, we give bounds on $t$ for which the absolute error of these approximations is smaller than $\delta$. As a consequence, when the vector $v$ is integral, one may use a rounding procedure, such as the floor or ceiling function, to provably compute $M$ via a single evaluation of an approximating function.

In practice, when the entries of $v$ are selected
from a discrete set, such as the integers~$\bZ$,
the maximum $M$ is often obtained
more than once.  The number of times
$M$ is attained is
the \emph{multiplicity} of $M$ in $v$,
namely
$$\mydef{\mu_M} = \# \{i~|~v_i = M\}.$$ When the multiplicity of $M$ is large, the ratio approximation significantly outperforms the other approximations (see Section~\ref{sec:Experiments}).
In Section~\ref{sec:Approximations}, we express $\mu_M$ as a limit of the aforementioned functions and derive analogous convergence rates
result for computing $\mu_M$.

In light of part \eqref{eq:Rv} of Proposition~\ref{prop:Lexpressions},
we generalize the approximation $R_v(t)$
using higher-order derivatives.
In particular, for $k\geq 1$, we define
$$
\mydef{R_v^{(k)}(t)} = -\frac{(-t)^k}{(k-1)!} \frac{d^k}{dt^k} \mathcal L_v(t).
$$
Observe that $R_v^{(1)}(t) = R_v^{(1)}(e^u) = \frac{d}{du} \mathcal L_v(e^u) = R_v(e^u) = R_v(t)$.
We show, in Section~\ref{sec:Approximations},
that every $R_v^{(k)}(t)$ for $k\geq 1$ converges
to $M$ at the same rate.
We discuss how to use these higher-order derivatives to numerically approximate other information about $v$ (see Theorem \ref{thm:HigherRatioOtherInformation}).

In Section \ref{sec:Tropical}, we explore
the geometry of $L_v(t)$ and $R^{(k)}_v(t)$ in terms of objects called amoebas from the world of tropical geometry. We realize the graph of the function $u \mapsto \mathcal L_v(e^u)$ as the upper boundary of a certain amoeba and provide a geometric
interpretation of the performance differences of $R_v(t)$ and $L_v(t)$ when $\mu_M$ is large.

In Section \ref{sec:Experiments}, we conduct a series of experiments showcasing our theoretical results
and the performance differences of the approximation techniques discussed. In particular, we provide empirical evidence showing the extent to which the bounds derived in Section \ref{sec:Approximations} are tight. We illustrate how the ratio approximations perform significantly better than the others when the maximum appears with non-trivial multiplicity and that this feature persists in the presence of a noisy model.

In Section \ref{sec:MaxCon}, we
propose an algorithm
for the max-convolution problem:
\begin{align}
\nonumber\mydef{\textbf{MAXCON:}} \quad &\text{Given }a=(a_0,\dots,a_n)\in\bZ^{n+1} \hbox{~and~} b=(b_0,\dots,b_n) \in \bZ^{n+1}, \\
& \text{compute } c=(c_0,\dots,c_{2n})\in\bZ^{2n+1} \hbox{~where}\\
& \nonumber
c_k = \max_{\max\{0,k-n\}\leq i \leq \min\{k,n\}} (a_i+b_{k-i}).
\end{align}
Since each $c_k$ is a maximum of the integer vector $v^{(k)}=(a_i+b_{k-i})_{i=\max\{0,k-n\}}^{\min\{k,n\}}$ its value may be determined by an (appropriately large) evaluation of an approximation of that maximum. In particular, any algorithm which computes classical convolution coefficients may be used as an oracle for evaluating $L_{v^{(k)}}(t)$. The fast Fourier transform, for example, performs such a computation using $O(n\log(n))$ operations. By combining this fact with our bounds from Section \ref{sec:Approximations}, we obtain a
quasi-linear time algorithm for the max-convolution problem. We end by applying our numerical approach to the {maximum consecutive subsums problem} and and the computation of {service curve constraints}.

\section{Notation and Fundamental Results}\label{sec:Basic}
We begin by fixing the following notation
\begin{align*}
\mydef{v=(v_1,\ldots,v_n):}& \text{ an }n\text{-tuple of real numbers} \\
\mydef{M:}& \,\, \max(v) \\
\mydef{\mu_c:}& \text{ multiplicity of a real number }c\text{ in }v, \text{ i.e., } \#\{i~|~v_i = c\} \\
\mydef{\ell:} & \text{ number of distinct elements in }v \\
\mydef{w=(w_1,\ldots,w_\ell):}& \text{ decreasing list of unique elements in }v\text{ i.e., } M=w_1>\cdots>w_\ell \\
\mydef{g=(g_1,\ldots,g_\ell):}& \, \, g_i = M-w_i
\text{ with } 0 = g_1 < g_2 < \cdots < g_\ell.
\end{align*}
Additionally, $\mydef{t}$ will denote a variable which takes on positive real values whereas $\mydef{u}=\log(t)$ is its image under the natural logarithm.

\begin{example}
To illustrate notation,
consider $v= (7,7,-1,0,1,1,2.5,2.5,7,7)~\in~\bR^{10}$.
$$ M=7, \quad \ell = 5,$$
$$\mu_7 = 4, \quad \mu_{2.5}=2, \quad \mu_1=2, \quad \mu_0=1, \quad \mu_{-1}=1,$$
$$w=(7,2.5,1,0,-1), \quad g=(0,4.5,6,7,8).$$
\end{example}

In principle,
the elements of $v$ can be any real numbers.
However, in practice, they
are usually some rational floating point approximations, that is, $v\in\bQ^n$. Moreover, we may assume for our analyses that $v \in \mathbb{Z}^n$ since evaluating $\mathcal L_v(t)$ at a power $t^k$ of $t$ corresponds to scaling $v$ by $k$:
\begin{equation}
\label{eq:powerscaling}
\mathcal L_v(t^k)=\mathcal L_v(e^{ku})=\mathcal L_{kv}(e^u)=\mathcal L_{kv}(t).
\end{equation}

\begin{remark}
We note that $\mathcal L_v(t) = \mathcal L_{uv}(e)$.  The function $\mydef{\textrm{lse}(v)} = \mathcal L_v(e)$ is a popular activation function in the  field of machine learning traditionally called the \emph{log-sum-exp} function \cite{NielsenSun}.
Numerical methods for accurately evaluating
it may be found in \cite{BHH}.
 \end{remark}

The following expansion of $\mathcal L_v(t)$ near $t=\infty$ is fundamental for our analysis.

\begin{proposition}\label{prop:LLexpansion}

For $v \in \Z^n$, $\mathcal L_v(t)$
near $t=\infty$ has the following expansion:
\begin{align}
\label{eq:LLexpansion}
\mathcal L_v(t) & = \log(t)M+\log(\mu_M) + \log\left(\sum_{i=1}^\ell \frac{\mu_{w_i}}{\mu_M} t^{-g_i} \right)
\end{align}
where the first term inside the logarithm is
$\frac{\mu_{w_1}}{\mu_M}t^{-g_1} = 1$.
In particular, by expanding the logarithmic term, there exists nonnegative real numbers
$\{\alpha_j\}_{j=1}^\infty$ such that
\begin{align}
\label{eq:LLexpansion2}
\mathcal L_v(t)&=\log(t)M+\log(\mu_M) + \sum_{j=1}^\infty \alpha_j t^{-j} \\
&= uM+\log(\mu_M)+\sum_{j=1}^\infty \alpha_je^{-ju}
\end{align}
where $\alpha_1 = \cdots = \alpha_{g_2-1} = 0$ and $\alpha_{g_2} = \dfrac{\mu_{w_2}}{\mu_M}$.
\end{proposition}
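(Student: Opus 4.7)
The plan is to manipulate $F_v(t)$ directly before taking the logarithm, and then Taylor-expand the residual in powers of $t^{-1}$.

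First, I regroup the defining sum by distinct values. Since $v$ takes values $w_1>\cdots>w_\ell$ with multiplicities $\mu_{w_i}$, we have $F_v(t)=\sum_{i=1}^\ell \mu_{w_i}t^{w_i}$. Using $M=w_1$, $\mu_M=\mu_{w_1}$, and $g_i=M-w_i$, I factor out $t^M\mu_M$ to get
$$
F_v(t)=t^M\,\mu_M\sum_{i=1}^\ell\frac{\mu_{w_i}}{\mu_M}t^{-g_i}.
$$
Taking logarithms then yields \eqref{eq:LLexpansion}; the $i=1$ summand inside the bracketed sum equals $1$ because $g_1=0$ and $\mu_{w_1}=\mu_M$, exactly as claimed.

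For the series form \eqref{eq:LLexpansion2}, I set
$$
x(t)=\sum_{i=2}^\ell\frac{\mu_{w_i}}{\mu_M}t^{-g_i},
$$
and note that $|x(t)|\to 0$ as $t\to\infty$, so for $t$ sufficiently large the Mercator series $\log(1+x)=\sum_{k\geq 1}\frac{(-1)^{k+1}}{k}x^k$ converges absolutely. Because each $g_i$ with $i\geq 2$ is a positive integer, every $x(t)^k$ is a finite rational combination of monomials $t^{-(g_{i_1}+\cdots+g_{i_k})}$ supported in integer powers of $t^{-1}$; absolute convergence lets me reorder and collect by the total exponent, producing real coefficients $\alpha_j$ with $\log(1+x(t))=\sum_{j=1}^\infty \alpha_j t^{-j}$. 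Substituting $t=e^u$ gives the second stated form. The identification of the leading coefficients then falls out of a minimum-degree count: the smallest exponent appearing in $x(t)$ is $g_2$ (with coefficient $\mu_{w_2}/\mu_M$), while the smallest exponent appearing in $x(t)^k$ for $k\geq 2$ is $kg_2\geq 2g_2>g_2$. Thus only the $k=1$ term of the Mercator series contributes in degrees $j<2g_2$, forcing $\alpha_1=\cdots=\alpha_{g_2-1}=0$ and $\alpha_{g_2}=\mu_{w_2}/\mu_M$.

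The delicate point, and what I would flag as the main obstacle, is the blanket assertion that every $\alpha_j$ is nonnegative, because the alternating Mercator signs allow the term $-\tfrac12 x(t)^2$ to dominate the linear contribution at degrees $\geq 2g_2$. A small counterexample is $v=(0,-1,-1)$, where $F_v(t)=1+2t^{-1}$ and $\log(1+2t^{-1})=2t^{-1}-2t^{-2}+\tfrac{8}{3}t^{-3}-\cdots$, so $\alpha_2=-2$. What is genuinely true, and what my argument above actually establishes, is that each $\alpha_j$ is a $\Q$-linear combination of products of the ratios $\mu_{w_i}/\mu_M$; the vanishing through $j=g_2-1$ and the explicit value $\alpha_{g_2}=\mu_{w_2}/\mu_M$ are the only facts that the later convergence analyses require, so the proof is complete modulo weakening the global sign claim to this description.
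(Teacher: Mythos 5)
The paper supplies no proof for this proposition, so there is nothing to compare against directly; your derivation is the natural one, and it is correct in every respect except where the statement itself overreaches. Factoring $F_v(t)=\mu_M t^M\bigl(1+x(t)\bigr)$ with $x(t)=\sum_{i=2}^\ell(\mu_{w_i}/\mu_M)t^{-g_i}$, applying $\log$, and then expanding $\log(1+x)$ as a power series in $t^{-1}$ (using that the $g_i$ are positive integers) gives exactly \eqref{eq:LLexpansion} and the series form; the minimum-degree count cleanly yields $\alpha_1=\cdots=\alpha_{g_2-1}=0$ and $\alpha_{g_2}=\mu_{w_2}/\mu_M$. Your objection to the nonnegativity of all $\alpha_j$ is correct, and the counterexample is valid: for $v=(0,-1,-1)$ one has $\mathcal L_v(t)=\log(1+2t^{-1})=2t^{-1}-2t^{-2}+\tfrac{8}{3}t^{-3}-\cdots$, so $\alpha_2<0$. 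Even $v=(1,0)$ gives $\alpha_2=-\tfrac12$. The Mercator signs do not wash out under composition with a positive-coefficient series, so the blanket sign claim in \eqref{eq:LLexpansion2} is simply false.

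It is worth noting, as you partly do, that the paper's downstream results survive this correction. The inequality $L_v(t)\ge M$ follows directly from $F_v(t)\ge\mu_M t^M$, the inequality $R_v(t)\le M$ follows from $R_v(t)$ being a weighted average of the $v_j$ with positive weights, and the quantitative bounds in Theorems~\ref{thm:Ltbounds} and~\ref{thm:Rtbounds} are obtained by worst-case estimates on $F_v$ itself rather than by term-by-term positivity of the $\alpha_j$. The only facts from the proposition that are actually invoked later (in Proposition~\ref{prop:Expansions} and Theorem~\ref{thm:HigherRatioOtherInformation}) are the existence of the integer-power expansion, the vanishing $\alpha_1=\cdots=\alpha_{g_2-1}=0$, and the value $\alpha_{g_2}=\mu_{w_2}/\mu_M$, all of which your proof establishes. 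So the right fix is to delete the word ``nonnegative'' from the statement; your argument then proves exactly what is needed.
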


We obtain similar expressions for $L_v(t)$, $R_v(t)$, and $R_v^{(k)}(t)$ by combining Propositions~\ref{prop:Lexpressions} and~\ref{prop:LLexpansion}.
\begin{proposition}
\label{prop:Expansions}
For $v \in \Z^n$,
let $\{\alpha_j\}_{j=1}^\infty$ be as
in Proposition~\ref{prop:LLexpansion}.
Then, near $t=\infty$:
\begin{align}
\label{eq:Lexpansions}
L_v(t) &= M + \log_t(\mu_M) + \log_t\left(\sum_{i=1}^\ell \frac{\mu_{w_i}}{\mu_M} t^{-g_i} \right) \\ &= M + \frac{1}{\log(t)}\left(\log(\mu_M) + \sum_{j=1}^\infty \alpha_jt^{-j}\right) \notag \\
&= M+\frac{1}{u}\left(\log(\mu_M) + \sum_{j=1}^\infty \alpha_j e^{-ju}\right),\notag \\
\label{eq:RExpansion}
R_v(t)&=  M-\sum_{j=1}^\infty j\alpha_j t^{-j} \\
\notag &=  M-\sum_{j=1}^\infty j\alpha_je^{-ju}, \\
\label{eq:RkExpansions}
R_v^{(k)}(t)&=M-\sum_{j=1}^\infty j \binom{j+k-1}{k-1} \alpha_j t^{-j} \\
&=M-\sum_{j=1}^\infty j \binom{j+k-1}{k-1} \alpha_je^{-ju}.\notag
\end{align}
In particular, $R_v(t)$ and, more generally, $R_v^{(k)}(t)$ are analytic at $t=\infty$.
Additionally, if each $v_i\geq 0$,
\begin{align}
\label{eq:Pexpansion}
||v||_{\log(t)} &= e^{L_{\log|v|}(t)} = M \cdot e^{L_{\log|v|}(t)-\log(M)}.
\end{align}
\end{proposition}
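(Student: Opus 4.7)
The plan is to derive each expansion by combining the identities in Proposition~\ref{prop:Lexpressions} with the expansion of $\mathcal L_v(t)$ given in Proposition~\ref{prop:LLexpansion}. A preliminary remark I would record first: because $v \in \Z^n$, the function $F_v(t)$ is a Laurent polynomial equal to $\mu_M t^M(1 + O(t^{-g_2}))$ as $t \to \infty$, so $\mathcal L_v(t) - M\log(t) - \log(\mu_M)$ is an analytic function of $1/t$ at $1/t = 0$ and the series $\sum_{j\geq 1}\alpha_j t^{-j}$ has positive radius of convergence. In particular, it may be differentiated termwise to any order in either $t$ or $u = \log(t)$ for all sufficiently large $t$.

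The expansion for $L_v(t)$ is then immediate from $L_v(t) = \frac{1}{\log(t)}\mathcal L_v(t)$: the three displayed forms are obtained by dividing the two forms \eqref{eq:LLexpansion} and \eqref{eq:LLexpansion2} by $\log(t)$ and then substituting $t^{-j} = e^{-ju}$. The expansion for $R_v(t)$ follows from $R_v(e^u) = \frac{d}{du}\mathcal L_v(e^u)$ by differentiating $uM + \log(\mu_M) + \sum_j \alpha_j e^{-ju}$ termwise in $u$: the constant $\log(\mu_M)$ dies, $uM$ contributes $M$, and each $\alpha_j e^{-ju}$ becomes $-j\alpha_j e^{-ju}$, yielding \eqref{eq:RExpansion}. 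The $p$-norm equation is exactly the third identity in Proposition~\ref{prop:Lexpressions} after writing $M = e^{\log(M)}$ and pulling it out of the exponential, which is justified because $\max(\log|v|) = \log(M)$ when all $v_i \geq 0$.

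The only computation requiring genuine care is the $R_v^{(k)}$ case. Differentiating $\mathcal L_v(t) = M\log(t) + \log(\mu_M) + \sum_j \alpha_j t^{-j}$ a total of $k$ times with respect to $t$ gives
\[
\frac{d^k}{dt^k}\mathcal L_v(t) = M(-1)^{k-1}(k-1)!\,t^{-k} + \sum_{j\geq 1}(-1)^k j(j+1)\cdots(j+k-1)\,\alpha_j\,t^{-j-k}.
\]
Multiplying by $-\frac{(-t)^k}{(k-1)!}$ cancels the signs and the factorial on the leading term, recovering $M$; on the $j$-th summand it leaves $-\frac{1}{(k-1)!}\,j(j+1)\cdots(j+k-1)\,\alpha_j\, t^{-j}$, and the identification $j(j+1)\cdots(j+k-1) = (k-1)!\,j\binom{j+k-1}{k-1}$ produces the claimed formula \eqref{eq:RkExpansions}; the $e^{-ju}$ form follows by the substitution $t^{-j} = e^{-ju}$. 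Analyticity at $t=\infty$ is then visible from this final expression, since the correction to $M$ is a convergent power series in $1/t$ with no constant or positive-degree terms. I do not expect any substantial obstacle: the only subtleties are recording convergence of the series before differentiating termwise, and tracking signs together with the rising-factorial identity in the $R_v^{(k)}$ calculation.
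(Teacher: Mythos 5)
Your proof is correct and follows exactly the route the paper indicates (but does not spell out): combine Proposition~\ref{prop:Lexpressions} with the expansion in Proposition~\ref{prop:LLexpansion}, differentiate termwise, and rewrite the rising factorial $j(j+1)\cdots(j+k-1)=(k-1)!\,j\binom{j+k-1}{k-1}$ for the $R_v^{(k)}$ case. Your preliminary remark justifying termwise differentiation via analyticity of $\mathcal L_v(t)-M\log t-\log\mu_M$ in $1/t$ is a welcome addition that the paper leaves implicit.
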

Clearly, $L_v(t)\geq M$ and the $\log_t(\mu_M)$ term
from \eqref{eq:Lexpansions} is responsible for a slow convergence rate of $L_v(t)$ to~$M$.
This logarithmic term is eliminated in $R_v(t)$
and, more generally, in  $R_v^{(k)}(t)$. See
Section~\ref{sec:Tropical} for a geometric explanation of this fact.

\begin{remark}
\label{rem:Cauchy}
Since $R_v(t)$ and, more generally,
$R_v^{(k)}(t)$ are analytic at $t=\infty$
when $v\in\bZ^n$,
Cauchy's integral formula yields that
for each $k\geq 1$ there exists $r > 0$ such that
\begin{equation}
\label{eq:CauchyIntegral}
\frac{1}{2\pi \sqrt{-1}} \oint_{|t|=r} t^{-1}\cdot R_v^{(k)}(t^{-1})\cdot dt = M.
\end{equation}
Numerically, one can use the trapezoid rule~\cite{TrapezoidRule}
to approximate $M$ from this integral.
\end{remark}

Since $R_v(t)$ depends on $F_v'(t)$ which may be difficult to evaluate in practice (see Section~\ref{sec:MaxCon}), we show below how to approximate
$R_v(t)$ from evaluations of $L_v(t)$.

\begin{proposition}\label{prop:ApproxR}
For $v\in\bR^n$, $t>1$, and $\alpha>0$ with $\alpha\neq 1$,
define
\begin{equation}\label{eq:DifferenceApprox}
\mydef{D_v(t,\alpha)} = \log_\alpha\left(\frac{F_v(\alpha\cdot t)}{F_v(t)}\right)
= \frac{\mathcal L_v(\alpha\cdot t) - \mathcal L_v(t)}{\log(\alpha)}.
\end{equation}
Then,
$$\lim_{\alpha\rightarrow 1} D_v(t,\alpha) = R_v(t).$$
\end{proposition}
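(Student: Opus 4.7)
The proof is really a change of variables that turns $D_v(t,\alpha)$ into a standard difference quotient. Set $u = \log t$ and $h = \log \alpha$, and define $f(u) = \mathcal L_v(e^u)$. Then by the second identity in \eqref{eq:DifferenceApprox},
\[
D_v(t,\alpha) \;=\; \frac{\mathcal L_v(e^{u+h}) - \mathcal L_v(e^u)}{h} \;=\; \frac{f(u+h) - f(u)}{h}.
\]
Since $\alpha>0$ and $\alpha\neq 1$, $h$ ranges over nonzero reals, and the condition $\alpha\to 1$ is equivalent to $h\to 0$.

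Next, I would invoke Proposition~\ref{prop:Lexpressions}, part~\eqref{eq:Rv}, which states that $\frac{d}{du}\mathcal L_v(e^u) = R_v(e^u)$; equivalently $f'(u) = R_v(e^u) = R_v(t)$. This derivative exists because $F_v(t)=\sum_j t^{v_j}>0$ for $t>0$, so $\mathcal L_v(t) = \log F_v(t)$ is smooth in $t$, and hence $f(u) = \mathcal L_v(e^u)$ is smooth in $u$. Taking the limit $h\to 0$ in the difference quotient above therefore yields
\[
\lim_{\alpha \to 1} D_v(t,\alpha) \;=\; \lim_{h\to 0} \frac{f(u+h)-f(u)}{h} \;=\; f'(u) \;=\; R_v(t).
\]

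There is no real obstacle here; the statement is essentially the definition of a derivative once one recognizes the correct change of variables. The only small thing to point out is that the two equal expressions for $D_v(t,\alpha)$ given in \eqref{eq:DifferenceApprox} match: the identity $\log_\alpha(F_v(\alpha t)/F_v(t)) = (\mathcal L_v(\alpha t) - \mathcal L_v(t))/\log(\alpha)$ is immediate from $\mathcal L_v = \log F_v$ and the change-of-base formula for logarithms. The second form is what makes the derivative interpretation transparent.
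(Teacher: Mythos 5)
Your proof is correct. The paper's own proof is a one-liner that applies l'H\^opital's rule in the variable $\alpha$ to the ratio $\frac{\mathcal L_v(\alpha t)-\mathcal L_v(t)}{\log \alpha}$, obtaining $\lim_{\alpha\to 1} R_v(\alpha t)=R_v(t)$. You instead substitute $u=\log t$, $h=\log\alpha$, and recognize $D_v(t,\alpha)=\frac{f(u+h)-f(u)}{h}$ with $f(u)=\mathcal L_v(e^u)$ as a difference quotient whose $h\to 0$ limit is $f'(u)$, then invoke the identity $\frac{d}{du}\mathcal L_v(e^u)=R_v(e^u)$ from Proposition~\ref{prop:Lexpressions}. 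These two arguments are computing the same derivative, so they are not deeply different; but your version is arguably cleaner in that it needs no l'H\^opital (only the definition of derivative) and makes explicit that the limit is exactly $\frac{d}{du}\mathcal L_v(e^u)$, tying directly back to the identity the paper already established. Your remark that smoothness of $f$ follows from $F_v>0$ on $t>0$ correctly supplies the hypothesis needed for either argument.
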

\begin{proof}
Applying l'H\^opital's rule yields $\lim_{\alpha\rightarrow 1} D_v(t,\alpha) = \lim_{\alpha\rightarrow 1} R_v(\alpha\cdot t) = R_v(t).$
\end{proof}

\section{Approximating quantities associated to \texorpdfstring{$v$}{v}}
\label{sec:Approximations}
Equipped with the expansions \eqref{eq:Lexpansions}--\eqref{eq:Pexpansion} and \eqref{eq:DifferenceApprox},
the functions $L_v(t)$,  $R_v^{(k)}(t)$,
$D_v(t,\alpha)$, and $||v||_u$ may each be used to approximate certain information about $v$, such as $M$, $\mu_M$, and $g_2$.

For each such approximation
of $M$, we derive a lower bound
on~$t$ so that the absolute error is less
than a given value $\delta>0$.
For integer vectors, we pay particular attention to the case where $\delta=1$, since one
can use the floor $\mydef{\lfloor \cdot \rfloor}$ and ceiling~$\mydef{\lceil \cdot \rceil}$ functions to \emph{provably} compute these values from their approximations.

\subsection{Computing the maximum}
\label{secsec:ComputingMaximum}
We derive bounds on the absolute errors of $L_v(t)$, $R_v(t)$, $D_v(t,\alpha)$, and~$||v||_u$ in Theorems \ref{thm:Ltbounds}, \ref{thm:Rtbounds}, \ref{thm:Dtbounds}, and \ref{thm:MaxFromPnormBound} respectively.
\begin{theorem}
\label{thm:Ltbounds}
Fix $v \in \mathbb Q^n$ and $\delta>0$. Then
$0\leq L_v(t)-M<\delta$ whenever $t>1$ and
$$t^{\delta+g_2}-t^{g_2}\mu_M-(n-\mu_M)>0.$$
If $v \in \Z^n$ and $\delta=1$, this bound is obtained when
$$t > \frac{\mu_M+\sqrt{\mu_M^2+4(n-\mu_M)}}{2}.$$
If additionally $\mu_M=1$ then this bound simplifies to $t>\dfrac 1 2 + \sqrt{n}$.
\end{theorem}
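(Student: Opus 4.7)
The plan is to use the explicit expansion from \eqref{eq:Lexpansions} to bound $L_v(t)-M$ and then solve the resulting inequality for $t$.

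First, I would factor the maximum out of the sum, writing
$$F_v(t) \;=\; t^M\sum_{i=1}^\ell \mu_{w_i}\, t^{-g_i} \;=\; t^M\Bigl(\mu_M + \sum_{i=2}^\ell \mu_{w_i}\, t^{-g_i}\Bigr),$$
so that $L_v(t)-M = \log_t\!\bigl(\mu_M + \sum_{i=2}^\ell \mu_{w_i}\, t^{-g_i}\bigr)$. The nonnegativity $L_v(t)-M\ge 0$ is immediate from $\mu_M\ge 1$ and $t>1$. For the upper bound, since $g_i\ge g_2$ for $i\ge 2$ and $\sum_{i\ge 2}\mu_{w_i}=n-\mu_M$, the sum inside is at most $\mu_M+(n-\mu_M)t^{-g_2}$. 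Requiring this quantity to be $<t^\delta$ — which is equivalent to $L_v(t)-M<\delta$ — rearranges exactly to
$$t^{\delta+g_2}-t^{g_2}\mu_M-(n-\mu_M)>0,$$
giving the first claim.

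Next, for $v\in\Z^n$ and $\delta=1$, I would reduce to the worst case $g_2=1$. Since $v$ is integral and $w_2<w_1=M$, we have $g_2\in\Z_{\ge1}$. Rewriting the inequality as $t^{g_2}(t-\mu_M)>n-\mu_M$, one checks that whenever $t>\mu_M$ (so the left side is positive), increasing $g_2$ only makes the inequality easier. Thus the critical case is $g_2=1$, where the inequality becomes the quadratic $t^2-\mu_M t-(n-\mu_M)>0$, whose positive root is $\tfrac12(\mu_M+\sqrt{\mu_M^2+4(n-\mu_M)})$. One still needs to verify that this root is $\ge\mu_M$ (so the assumption $t>\mu_M$ used above is in force), but this is clear since $\mu_M^2+4(n-\mu_M)\ge\mu_M^2$ with equality iff $n=\mu_M$.

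Finally, specializing to $\mu_M=1$ gives the root $\tfrac12\bigl(1+\sqrt{4n-3}\bigr)$, and I would bound this above by $\tfrac12+\sqrt{n}$ using $\sqrt{4n-3}\le2\sqrt n$, which yields the claimed simpler bound. The main obstacle in this proof is the monotonicity argument in $g_2$: one has to be careful to justify that $g_2=1$ really is the worst case before invoking the quadratic formula, since the dependence of the inequality on $g_2$ is not monotone unless one knows a priori that $t$ exceeds $\mu_M$.
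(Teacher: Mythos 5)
Your proof is correct and follows essentially the same route as the paper: factor out $t^M$, bound the remaining sum by $\mu_M+(n-\mu_M)t^{-g_2}$, and reduce the integer case to $g_2=1$. Your extra care about the monotonicity in $g_2$ (noting one must have $t>\mu_M$ so that increasing $g_2$ helps) is a genuine subtlety the paper leaves implicit, though it is silently ensured since the paper's quadratic bound already forces $t>\mu_M$.
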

\begin{proof}
Assume, after reindexing, that $v_1=\cdots=v_{\mu_M}=M$. Thus,
$$L_v(t) - M = \log_t\left(\mu_M + \sum_{j=\mu_M+1}^{n} t^{v_j-M}\right).$$
Hence, $L_v(t)-M<\delta$ provided that the expression within the logarithm is smaller than $t^{\delta}$. Since the function $t^x$ is monotonic for $t>1$,
$$\mu_M + \sum_{j=\mu_M+1}^{n} t^{v_j-M} \leq \mu_M + (n-\mu_M)t^{-g_2},$$
completing the proof of the first statement
since this value is less than $t^{\delta}$ when
$$t^{\delta} > \mu_M + (n-\mu_M)t^{-g_2}.$$
For $v\in\bZ^n$, we have that $g_2\geq 1$ so that
a sufficient condition
when $\delta =1$ is
$$t^2-\mu_M t - (n-\mu_M)>0$$
yielding the second statement.
The third statement follows immediately.
\end{proof}
When $v$ consists of integers and $\mu_M$ is known, Theorem \ref{thm:Ltbounds} suggests an algorithm which provably computes~$M$ using one evaluation of $L_v(t)$:
\begin{center}
\emph{Return $\lfloor L_v(t) \rfloor$ for $t$ satisfying the inequality $2t>\mu_M+\sqrt{\mu_M^2+4(n-\mu_M)}$. }
\end{center}
The largest $t$ value required is when $\mu_M = n$
for which one can take $t = n+1$.
In particular, for any $v\in\bZ^n$,
one always has $\lfloor L_v(n+1) \rfloor = M$.

\begin{figure}[b!]
\includegraphics[scale=0.065]{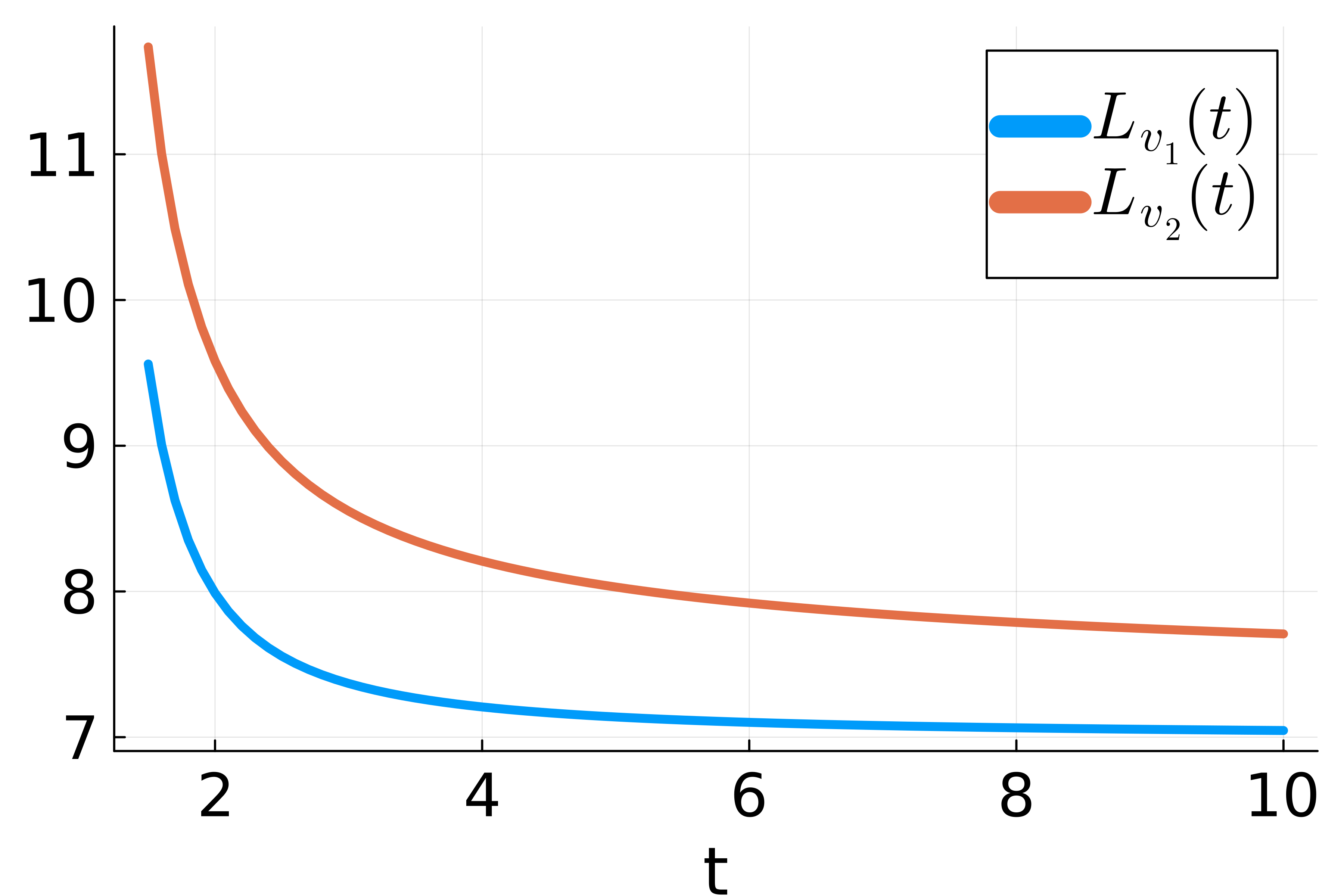}
\caption{The graphs of $L_{v_1}(t)$ and $L_{v_2}(t)$ as in Example~\ref{ex:simple_integer_multiplicity_difference}.}
\label{fig:simple_integer_multiplicity_difference}
\end{figure}
The following example illustrates Theorem \ref{thm:Ltbounds} on qualitatively different input.
\begin{example}
\label{ex:simple_integer_multiplicity_difference}
Consider the following integer vectors:
\[v_1 = (1,2,3,4,5,6,7)\hspace{0.25 in} \text{and} \hspace{0.25 in} v_2=(1,2,3,4,5,6,7,7,7,7,7).
\]
The maximum of both vectors is $7$
which has multiplicity $1$ and $5$ in
$v_1$ and $v_2$, respectively.
By Theorem~\ref{thm:Ltbounds},
$L_{v_1}(t) \in [7,8)$ when $t>3$ and $L_{v_2}(t) \in [7,8)$ when $t>6$. Figure \ref{fig:simple_integer_multiplicity_difference} displays a verification of these bounds and illustrates the reduced convergence rate for $v_2$ due to the increased multiplicity of the maximum.
\end{example}

The worst-case scenario analysis for $R_v(t)$ is qualitatively distinct from that of~$L_v(t)$. The fact which distinguishes these cases is that
for a fixed $t>1$, the function $x \mapsto xt^{-x}$ is decreasing only after reaching its maximum on $\mathbb{R}_{>0}$ at $x=\log(t)^{-1}$.

\begin{theorem}
\label{thm:Rtbounds}
Fix $v \in \mathbb Q^n$ and $\delta>0$. Then $0 \leq M - R_v(t) <\delta$ {when $t>e^{1/g_2}$ and} \[
t > \left(\frac{(n-\mu_M)g_2}{\delta \cdot \mu_M}\right)^{\frac{1}{g_2}}.
\]
If $v \in \Z^n$ and $\delta=1$, this bound is obtained when
\[
t > \max\left(e,\frac{n-\mu_M}{\mu_M} \right).
\]
\end{theorem}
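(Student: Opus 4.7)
The plan is to start from the direct formula for $R_v(t)$ in terms of $F_v$ rather than the series expansion \eqref{eq:RExpansion}, because the bound should come out cleanly in one step if we control one dominant term. Reindexing so that $v_1 = \cdots = v_{\mu_M} = M$, I would write
\[
M - R_v(t) = \frac{\sum_{j=1}^n (M-v_j)\, t^{v_j - M}}{\sum_{j=1}^n t^{v_j - M}} = \frac{\sum_{j=\mu_M+1}^n s_j\, t^{-s_j}}{\mu_M + \sum_{j=\mu_M+1}^n t^{-s_j}},
\]
where $s_j := M - v_j \geq g_2$ for $j > \mu_M$. The denominator is at least $\mu_M$. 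The numerator is a sum of $n-\mu_M$ terms of the form $\phi(s_j)$, where $\phi(x) = x t^{-x}$.

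The key analytic step is to control $\phi$. Its derivative $\phi'(x) = t^{-x}(1 - x\log t)$ is negative exactly when $x > 1/\log(t)$, so $\phi$ is decreasing on $[g_2,\infty)$ precisely when $1/\log(t) \leq g_2$, i.e., when $t > e^{1/g_2}$; this is the source of the first hypothesis in the theorem. Under that hypothesis, each numerator term satisfies $s_j t^{-s_j} \leq g_2 t^{-g_2}$, giving
\[
0 \leq M - R_v(t) \leq \frac{(n-\mu_M)\, g_2\, t^{-g_2}}{\mu_M}.
\]
Setting the right-hand side strictly less than $\delta$ and solving for $t$ yields the stated bound $t > \bigl((n-\mu_M)g_2/(\delta\mu_M)\bigr)^{1/g_2}$, and the nonnegativity $M - R_v(t) \geq 0$ is immediate either from the numerator being a sum of nonnegative terms or directly from the series \eqref{eq:RExpansion}.

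For the integer case with $\delta = 1$, I would use $g_2 \geq 1$: the first condition $t > e^{1/g_2}$ is implied by $t > e$, so it suffices to check that $\bigl((n-\mu_M)g_2/\mu_M\bigr)^{1/g_2} \leq \max\bigl(e, (n-\mu_M)/\mu_M\bigr)$. Letting $A := (n-\mu_M)/\mu_M$, this reduces to the elementary inequality $(Ag_2)^{1/g_2} \leq \max(e,A)$, which I would verify by a short case split on whether $A \leq e$ or $A > e$, in both cases invoking the auxiliary bound $g_2 \leq e^{g_2-1}$ valid for $g_2 \geq 1$ (proved by noting that $x \mapsto e^{x-1}/x$ is increasing on $[1,\infty)$ with value $1$ at $x=1$). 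The main obstacle is the non-monotonicity of $\phi(x) = xt^{-x}$ near the origin, but the explicit cut-off $t > e^{1/g_2}$ isolates exactly the regime where monotonicity kicks in at the first non-maximal exponent $g_2$, so once that is acknowledged the rest of the argument is routine.
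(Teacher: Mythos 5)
Your proof is correct and follows essentially the same approach as the paper: both hinge on the observation, stated in the paper just before the theorem, that $x \mapsto xt^{-x}$ decreases on $[g_2,\infty)$ once $t > e^{1/g_2}$, and both arrive at the same worst-case bound $M - R_v(t) < (n-\mu_M)g_2 t^{-g_2}/\mu_M$ by lower-bounding the denominator by $\mu_M$. You work directly from the quotient $R_v(t) = tF_v'(t)/F_v(t)$ rather than the power series \eqref{eq:RExpansion}, which makes the worst-case analysis more explicit, and your case split in the integer step spells out what the paper leaves implicit (the inequality $t^{g_2} = t\cdot t^{g_2-1} > A\cdot e^{g_2-1} \geq A g_2$), but the underlying argument is identical.
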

\begin{proof}
From~\eqref{eq:RExpansion}, a worst-case analysis
with $t>e^{\frac{1}{g_2}}$ shows that
$$M-R_v(t) =  \sum_{j=1}^\infty j \alpha_j t^{-j} < \frac{n-\mu_M}{\mu_M} g_2 t^{-g_2}.$$
Therefore, the main result follows from
\[M-R_v(t)<\delta \quad \quad \text{whenever} \quad \quad t^{g_2}>\frac{(n-\mu_M)g_2}{\delta \cdot \mu_M}.\]
When $v \in \mathbb{Z}^n$ and $\delta=1$, this simplifies to $t^{g_2}>\frac{(n-\mu_M)g_2}{\mu_M}$.  Since $t>e$ and $g_2\geq 1$, this holds
if additionally
$$t>\frac{n-\mu_M}{\mu_M}.$$
\end{proof}

\begin{example}
For $v_1$ and $v_2$ as in Example~\ref{ex:simple_integer_multiplicity_difference},
Figure \ref{fig:multiplicityDifference}
compares the graphs of $L(t)$ and $R(t)$.  Note that Theorems \ref{thm:Ltbounds} and \ref{thm:Rtbounds} guarantee that $L_{v_2}(t) \in[7,8)$ when $t>6$ and $R_{v_2}(t)\in (6,7]$ when $t>e\approx 2.718$.

\end{example}

\begin{figure}[htpb!]
\includegraphics[scale=0.065]{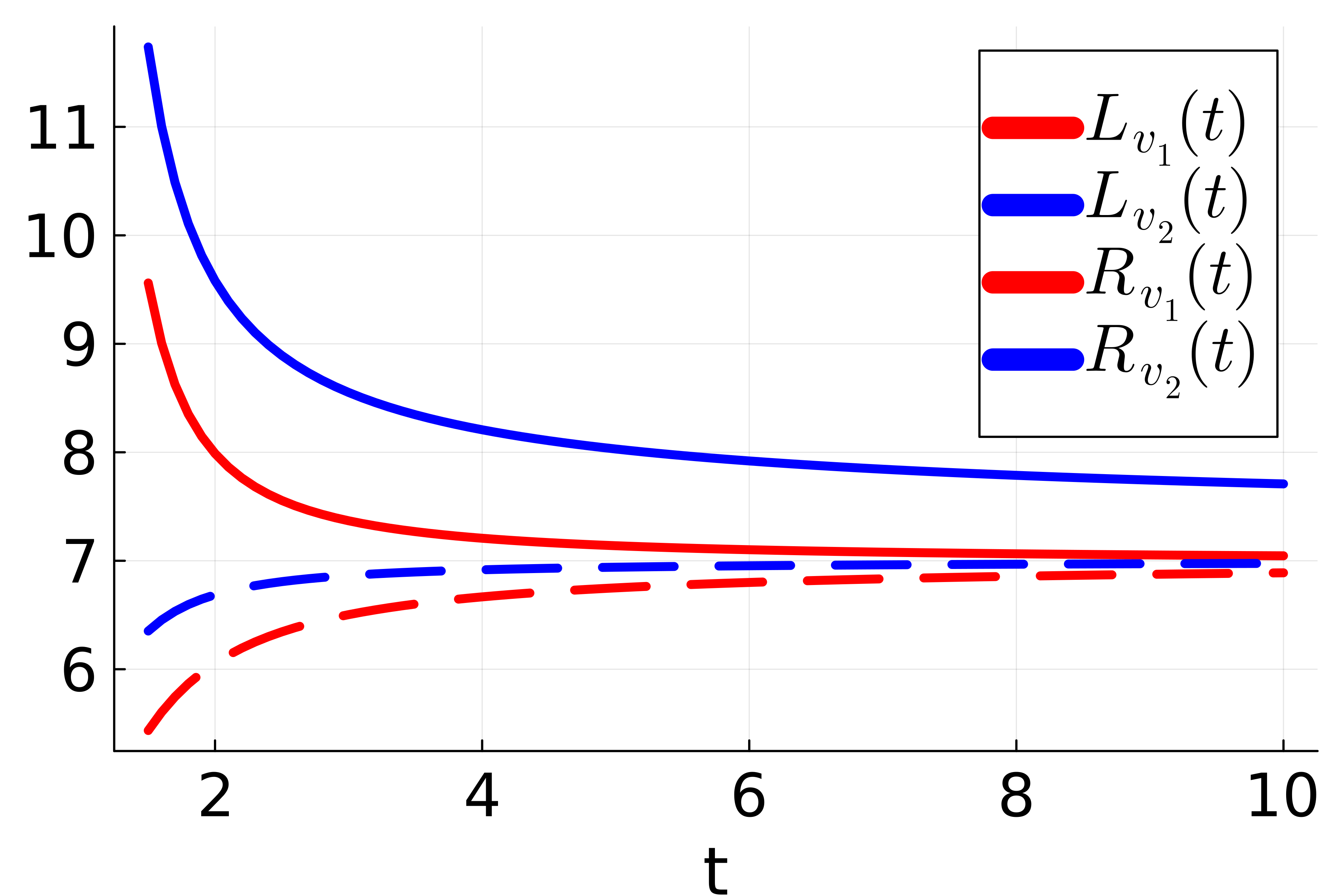}
\caption{The graphs of $L(t)$ and $R(t)$ applied to $v_1$ and $v_2$ from  Example~\ref{ex:simple_integer_multiplicity_difference}.
}
\label{fig:multiplicityDifference}
\end{figure}

\begin{remark}
For $v\in\bZ^n$, one has
$$L_v(t)-M = \left\{\begin{array}{ll}
O(1/\log(t)) & \hbox{~~if~} \mu_M > 1, \\
O(t^{-g_2}/\log(t)) & \hbox{~~if~} \mu_M = 1,
\end{array}\right.
\hbox{~~~~~~~~and~~~~~~~~}
M-R_v(t) = O(t^{-g_2}).
$$
When $\mu_M=1$, Thereom~\ref{thm:Ltbounds}
requires $t=O(\sqrt{n})$
while Theorem~\ref{thm:Rtbounds}
requires $t=O(n)$.
However, the bound $\frac{n-\mu_M}{\mu_M}$
from Theorem~\ref{thm:Rtbounds} is smaller than the bound $\frac{\mu_M + \sqrt{\mu_M^2 + 4(n-\mu_M)}}{2}$ from Theorem \ref{thm:Ltbounds} whenever $\mu_M \geq \frac{1}{4}(\sqrt{8n+1}-1)$. For reference, this means that
$$(n,\mu_M)\in\{(10,2),(105,7),(1081,23),(10153,71),(100576,224),\ldots\}$$ are afforded equal $t$-bounds for $L_v(t)$ or $R_v(t)$ via
Theorems~\ref{thm:Ltbounds} and~\ref{thm:Rtbounds}, respectively. These bounds are derived from the worst-case scenarios where $M-1$ appears with multiplicity $n-\mu_M$. However, on input vectors~$v$ sampled from the uniform distribution on $\{0,\ldots,M\}$ with varying multiplicities $\mu_M$, $L_v(t)$ consistently  performs worse than $R_v(t)$. For more details, see the experiments in Section \ref{sec:Experiments}.
\end{remark}

Based on the relationship between $D_v(t,\alpha)$
and $R_v(t)$ summarized in Proposition~\ref{prop:ApproxR},
the error is similar to Theorem~\ref{thm:Rtbounds}.  Here, the worst case analysis
yields the function $x\mapsto \dfrac{t^{-x}(1-\alpha^{-x})}{\log(\alpha)}$ which is decreasing after reaching its maximum on~$\bR_{>0}$ at $x = \dfrac{\log(\log(\alpha t))-\log(\log(t))}{\log(\alpha)}$ which limits to $\log(t)^{-1}$ as $\alpha\rightarrow1$.

\begin{theorem}\label{thm:Dtbounds}
Fix $v\in\bQ^n$, $\delta > 0$, and $\alpha>1$. Then, $0 \leq M - D_v(t,\alpha) < \delta$
when $t > e^{1/g_2}$,
$$
\begin{array}{l}
t > \alpha^{\dfrac{\alpha^{-g_2}}{1-\alpha^{-g_2}}}
\xrightarrow{\alpha\rightarrow1} e^{1/g_2},
\hbox{~~and~~} \\[0.1in]
t > \left(\dfrac{n-\mu_M}{\delta\cdot \mu_M}\cdot \dfrac{1-\alpha^{-g_2}}{\log(\alpha)}\right)^{\frac{1}{g_2}}\xrightarrow{\alpha\rightarrow1} \left(\dfrac{(n-\mu_M)g_2}{\delta\cdot \mu_M}\right)^{\frac{1}{g_2}}.
\end{array}$$
If $v\in\bZ^n$ and $\delta = 1$, this bound is obtained when $\alpha > 1$ and
$$t > \max\left(e,\frac{n-\mu_M}{\mu_M}\right).
$$
\end{theorem}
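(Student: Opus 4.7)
The plan is to mirror the direct derivation used in Theorem \ref{thm:Rtbounds}, but starting from the closed-form expression $D_v(t,\alpha)=\frac{1}{\log\alpha}\log\bigl(F_v(\alpha t)/F_v(t)\bigr)$ from~\eqref{eq:DifferenceApprox}. Factoring $(\alpha t)^M$ from the numerator and $t^M$ from the denominator of $F_v(\alpha t)/F_v(t)$ cancels the $t^M$'s and isolates the overall factor $\alpha^M$. Writing $c_i=M-v_i$, so that $c_i=0$ for exactly $\mu_M$ indices and $c_i\geq g_2$ otherwise, one arrives at
\[
M-D_v(t,\alpha) \;=\; \frac{1}{\log\alpha}\log\frac{\mu_M+\sum_{c_i>0} t^{-c_i}}{\mu_M+\sum_{c_i>0}(\alpha t)^{-c_i}}.
\]
Since $\alpha>1$ forces $(\alpha t)^{-c_i}<t^{-c_i}$ whenever $c_i>0$, the argument of the logarithm exceeds $1$, proving $M-D_v(t,\alpha)\geq 0$.

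For the upper bound, I would apply $\log(1+x)\leq x$ and drop the positive $\sum(\alpha t)^{-c_i}$ in the denominator, obtaining
\[
M-D_v(t,\alpha)\;\leq\;\frac{1}{\mu_M}\sum_{c_i>0}\frac{(1-\alpha^{-c_i})t^{-c_i}}{\log\alpha}.
\]
This is the direct analogue of the sum handled in Theorem \ref{thm:Rtbounds}, so the next step is to study $h(x)=(1-\alpha^{-x})t^{-x}/\log\alpha$ and determine when it is decreasing at $x=g_2$. A short calculation locates its unique positive maximizer at $x^\ast=(\log\log(\alpha t)-\log\log t)/\log\alpha$, and the condition $g_2\geq x^\ast$ rearranges (using $e^y\geq 1+y$ with $y=g_2\log\alpha$) exactly to $t\geq \alpha^{\alpha^{-g_2}/(1-\alpha^{-g_2})}$, whose $\alpha\to 1$ limit is $e^{1/g_2}$ by L'H\^opital. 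Under this threshold each of the $n-\mu_M$ summands is bounded by $h(g_2)$, yielding
\[
M-D_v(t,\alpha)\;\leq\;\frac{n-\mu_M}{\mu_M}\cdot\frac{(1-\alpha^{-g_2})t^{-g_2}}{\log\alpha},
\]
and forcing the right-hand side below $\delta$ gives the second displayed inequality, whose $\alpha\to 1$ limit $((n-\mu_M)g_2/(\delta\mu_M))^{1/g_2}$ again follows from $\lim_{\alpha\to 1}(1-\alpha^{-g_2})/\log\alpha=g_2$.

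For the integer case with $\delta=1$, I would use $g_2\geq 1$ to absorb both $\alpha$-dependent bounds into the single condition $t>\max(e,(n-\mu_M)/\mu_M)$. The inequality $e^y\geq 1+y$ gives $\alpha^{g_2}-1\geq g_2\log\alpha$, hence $\alpha^{1/(\alpha^{g_2}-1)}\leq e^{1/g_2}\leq e$, dispatching the first threshold; the same inequality yields $(1-\alpha^{-g_2})/\log\alpha\leq g_2$, after which the second threshold reduces to $t^{g_2}>(n-\mu_M)g_2/\mu_M$, handled by the same $t\cdot e^{g_2-1}\geq (n-\mu_M)g_2/\mu_M$ trick (valid because $e^{g_2-1}\geq g_2$ for integer $g_2\geq 1$) used at the end of Theorem \ref{thm:Rtbounds}. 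The main obstacle I expect is purely bookkeeping: tracking the $\alpha$-dependence of both thresholds and verifying cleanly that they collapse in the limit to the bounds of Theorem \ref{thm:Rtbounds}; the underlying calculus is routine once the role played by $x\mapsto xt^{-x}$ in the proof of Theorem \ref{thm:Rtbounds} is replaced by $h$.
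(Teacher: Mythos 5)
Your proof is correct and takes essentially the same route as the paper: a term-by-term worst-case analysis via the auxiliary function $h(x) = t^{-x}(1-\alpha^{-x})/\log(\alpha)$, whose unique maximizer $x^\ast$ yields the $\alpha$-dependent threshold on $t$, followed by bounding the residual sum and specializing $g_2 \geq 1$ to collapse the integer case to $t > \max(e, (n-\mu_M)/\mu_M)$. One small slip: the rearrangement of $g_2 \geq x^\ast$ to $t \geq \alpha^{\alpha^{-g_2}/(1-\alpha^{-g_2})}$ is an exact algebraic equivalence and uses no inequality at all -- the estimate $e^y \geq 1+y$ is only needed to check that this $\alpha$-dependent threshold is dominated by $e^{1/g_2}$ (so the first hypothesis subsumes the second) and to compute the $\alpha \to 1$ limits, not to carry out the rearrangement itself.
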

\begin{proof}
The worst case analysis using the three assumptions
on $\alpha$ and $t$ that are independent of $\delta$
show that
\[M-D_v(t,\alpha)<\delta \quad \quad \text{whenever} \quad \quad t^{g_2}>\dfrac{n-\mu_M}{\delta\cdot \mu_M}\dfrac{1-\alpha^{-g_2}}{\log(\alpha)}.\]
When $v \in \mathbb{Z}^n$ and $\delta=1$, this simplifies to $t^{g_2}>\frac{n-\mu_M}{\mu_M}\frac{1-\alpha^{-g_2}}{\log(\alpha)}$.  Since $t>e$, $\alpha>1$, and $g_2\geq 1$, this holds
if additionally $t>\frac{n-\mu_M}{\mu_M}$.
\end{proof}

To analyze the $p$-norm case, following \cite{PfeufferSerang2016,Serang2015},
we assume the vector $v$ has undergone a linear transformation so that each $v_j \in [0,1]$.

\begin{theorem}
\label{thm:MaxFromPnormBound}
If $v \in [0,1]^n$ and $\delta >0$, then $0\leq ||v||_u - M<\delta$ when $u>1$ and
$$e^{u(\Delta+g_2)}-e^{ug_2}\mu_M-(n-\mu_M)>0 \quad \quad \text{where} \quad \Delta= \log\left(1+\frac{\delta}{M}\right).$$
\end{theorem}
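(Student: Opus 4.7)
The plan is to work directly at the level of $||v||_u^u = \sum_j v_j^u$ and use the hypothesis $v \in [0,1]^n$ to replace a factor of $(1 - g_2/M)^u$ by $e^{-u g_2}$, converting the sufficient condition into the inequality in $e^u$ that the theorem states.

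First I would establish the easy lower bound: since $v_j \geq 0$, the $u$-th power of the norm satisfies $||v||_u^u \geq M^u$, so $||v||_u \geq M$.

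For the upper bound $||v||_u < M + \delta$, I would reindex so that $v_1 = \cdots = v_{\mu_M} = M$ and split $\sum_j v_j^u$ into max and non-max contributions. By definition of $g_2$, every off-max entry is at most $w_2 = M - g_2$, yielding
$$||v||_u^u \leq \mu_M M^u + (n - \mu_M)(M - g_2)^u.$$
Using $M \leq 1$ together with the standard inequality $1 - x \leq e^{-x}$ I would bound
$$(M - g_2)^u = M^u (1 - g_2/M)^u \leq M^u(1 - g_2)^u \leq M^u e^{-u g_2},$$
so that $||v||_u^u \leq M^u[\mu_M + (n - \mu_M)e^{-u g_2}]$. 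The desired inequality $||v||_u^u < (M + \delta)^u = M^u e^{u \Delta}$ is then implied by $\mu_M + (n - \mu_M)e^{-u g_2} < e^{u \Delta}$, and multiplying through by $e^{u g_2}$ and rearranging produces precisely the inequality in the theorem statement.

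The one step that requires real attention is the double inequality $(1 - g_2/M)^u \leq e^{-u g_2}$: both halves rely on $M \leq 1$ (the first via $g_2/M \geq g_2$, the second via $1 - g_2 \leq e^{-g_2}$), and this is the only place where the hypothesis $v \in [0,1]^n$ enters, so it is the main obstacle to adapting the result to more general ranges. Minor edge cases worth noting are $\ell = 1$ (where $g_2$ is undefined but the conclusion is immediate since $||v||_u = n^{1/u} M$) and $M = 0$ (where $v$ is identically zero and the claim is trivial).
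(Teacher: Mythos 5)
Your proof is correct, and it takes a genuinely different route from the paper's. The paper proves this result by reduction: it invokes the identity $||v||_u = M\cdot e^{\epsilon(u)}$ with $\epsilon(u)=L_{\log|v|}(e^u)-\log(M)$ from \eqref{eq:Pexpansion}, observes that $||v||_u - M < \delta$ is equivalent to $\epsilon(u) < \Delta$, and then cites Theorem~\ref{thm:Ltbounds} applied to the vector $\log|v|$. You instead argue directly at the level of $||v||_u^u = \sum_j v_j^u$, splitting off the max contributions and bounding the rest by $(n-\mu_M)(M-g_2)^u$, then using $M \le 1$ and $1-x \le e^{-x}$ to convert $(M-g_2)^u$ into $M^u e^{-ug_2}$.

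The trade-off is illuminating. The paper's route is shorter on the page but glosses over a genuine subtlety: when Theorem~\ref{thm:Ltbounds} is applied to $\log|v|$, the gap that naturally appears is $\log(M)-\log(w_2)$, not the $g_2 = M-w_2$ fixed in Section~\ref{sec:Basic} and printed in the theorem statement. Making the paper's reduction watertight requires the extra observation that Theorem~\ref{thm:Ltbounds}'s bound remains valid whenever the true gap is replaced by any smaller positive quantity, together with the fact that $\log(M)-\log(w_2) \ge M - w_2$ on $(0,1]$. Your direct argument sidesteps this entirely: it produces the stated inequality with $g_2 = M - w_2$ from the outset, and it makes crisply visible that the hypothesis $v\in[0,1]^n$ is used exactly once, in the two-step bound $(1-g_2/M)^u \le (1-g_2)^u \le e^{-ug_2}$. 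Your handling of the edge cases $\ell = 1$ and $M = 0$ is also appropriate (the latter is in fact necessary, since $\Delta$ is undefined when $M=0$). In short, the paper's proof is a reduction and yours is self-contained, and yours is arguably the more careful of the two.
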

\begin{proof}
By \eqref{eq:Pexpansion}, $||v||_{u} = M\cdot e^{\epsilon(u)}$ where $\epsilon(u)=L_{\log|v|}(e^u)-\log(M)$ which is the
error when using $L_{\log|v|}(e^u)$
to approximate $\log(M)$. Hence, $||v||_{u} - M < \delta$ if and only if $\epsilon(u) < \log \left(1+\frac{\delta}{M}\right)=:\Delta$. By Theorem \ref{thm:Ltbounds}, this occurs whenever
$t>1$ and
 $$t^{\Delta+g_2}-t^{g_2}\mu_M - (n-\mu_M) >0.$$
Since $u=\log(t)$, changing coordinates gives the result.
\end{proof}

\begin{figure}
\includegraphics[scale=0.065]{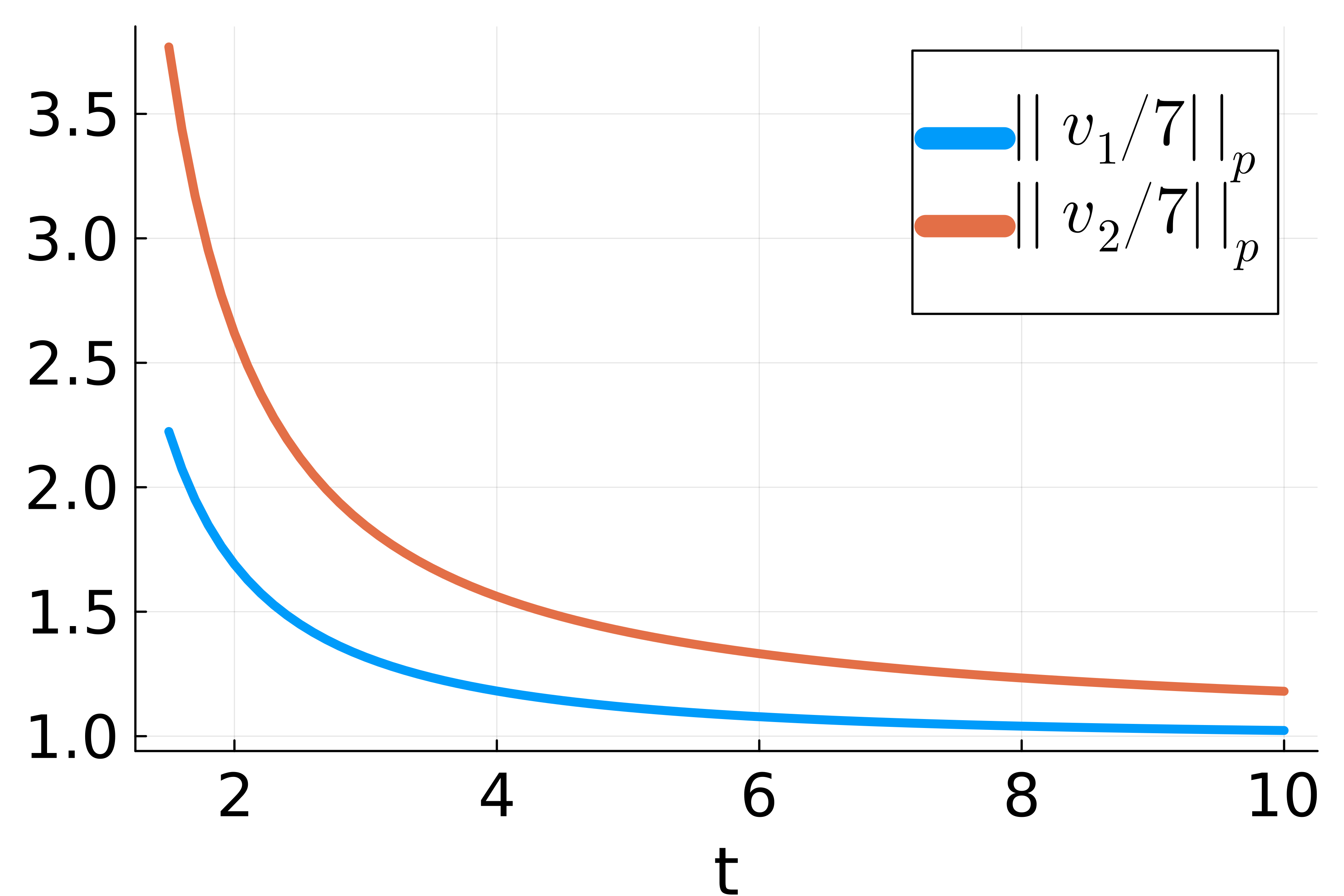}
\caption{The graphs of the $p$-norms of the vectors $v_1/7$ and $v_2/7$ with entries in $[0,1]$, where $v_1$ and $v_2$ are from  Example~\ref{ex:simple_integer_multiplicity_difference}.
}
\label{fig:pnormMultiplicityDifference}
\end{figure}

\begin{figure}[!htpb]
\begin{center}
\includegraphics[scale=0.055]{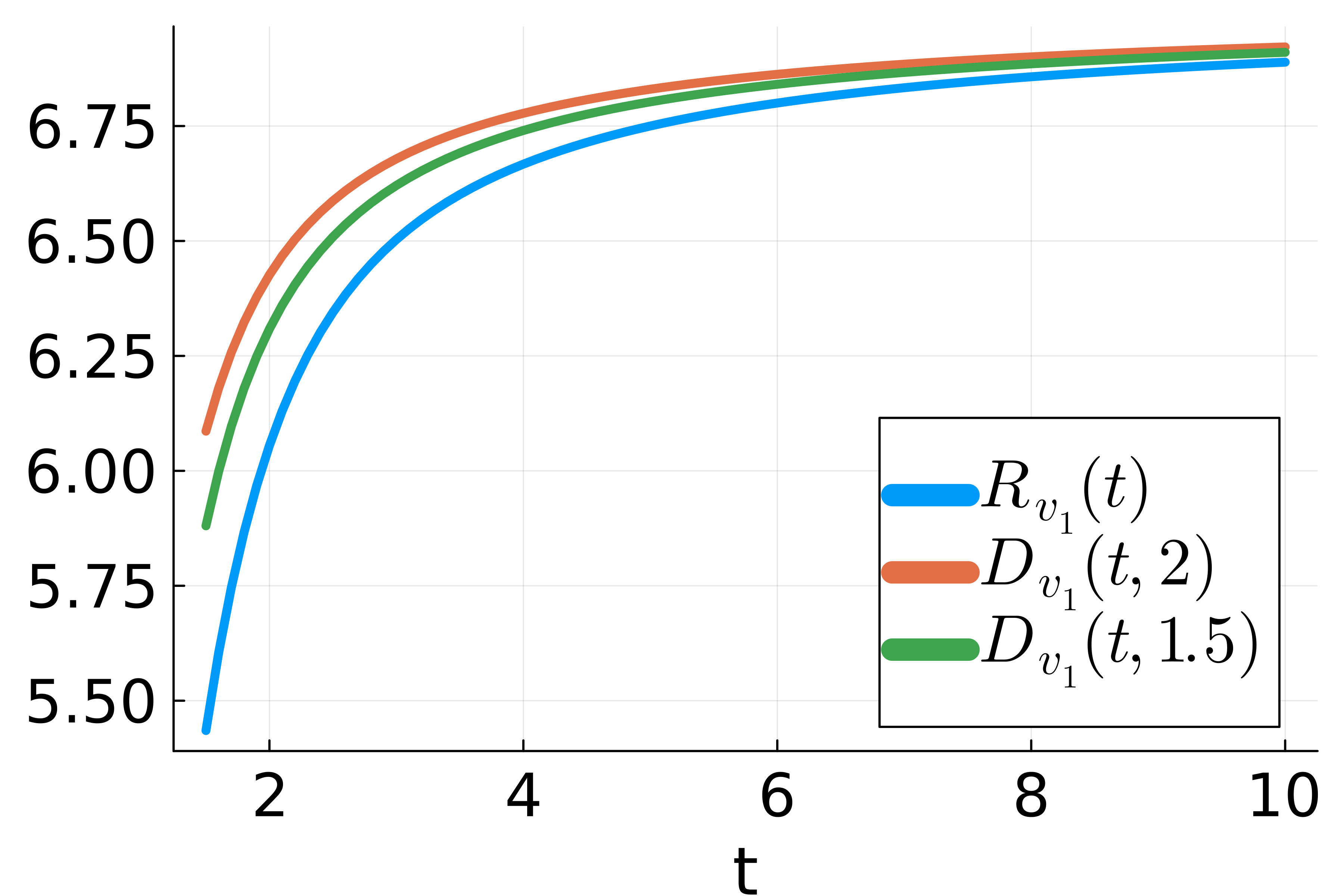} \quad \quad 
\includegraphics[scale=0.055]{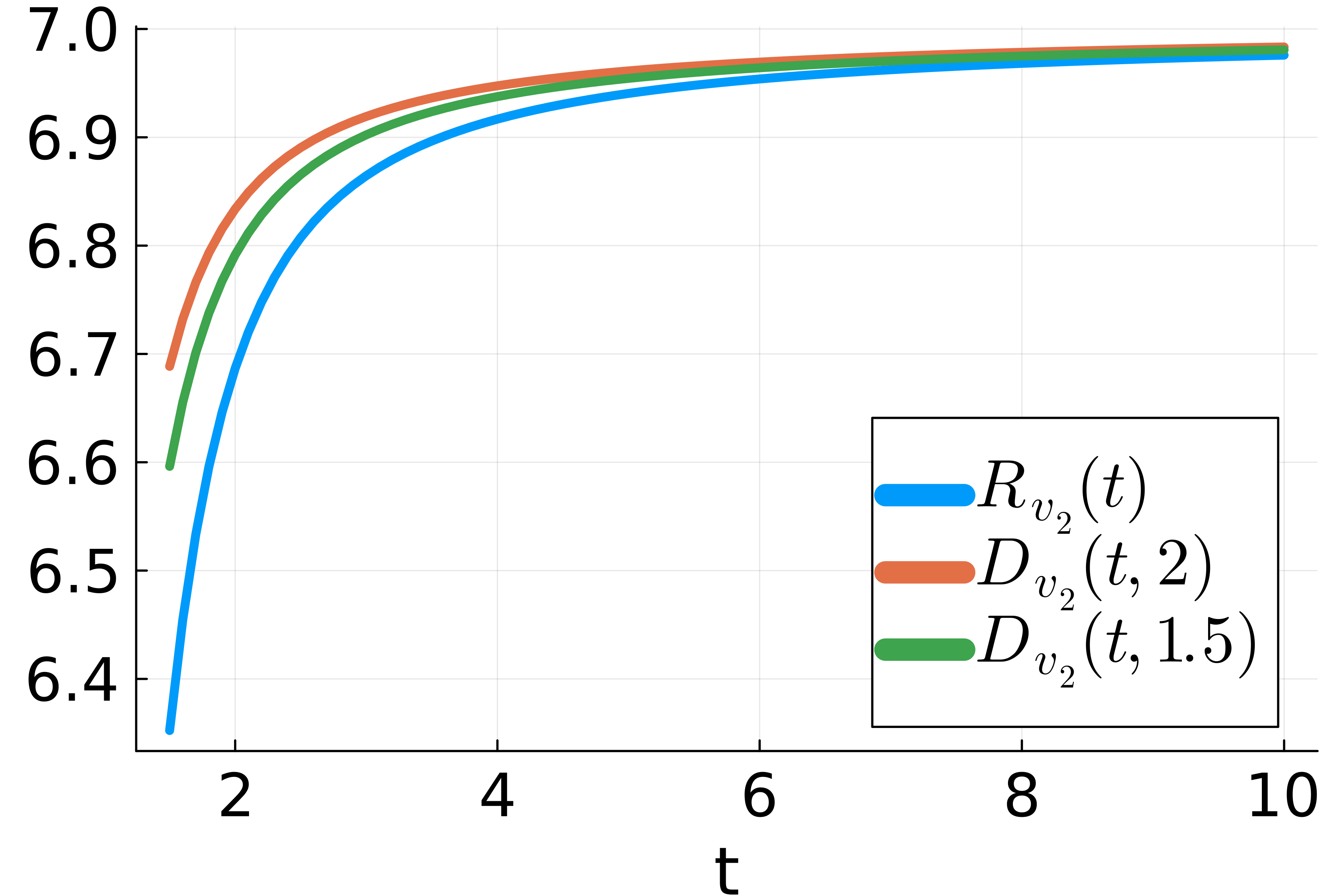}
\end{center}
\caption{Comparison of $D_{v}(t,\alpha)$ for $v_1$ and $v_2$ from Example \ref{ex:simple_integer_multiplicity_difference} with various values of $\alpha$.}
\label{fig:DplotComparison}
\end{figure}

\begin{example}
The following illustrates the differences between the approximations $L(t), R(t), D(t,\alpha),$ and~\mbox{$||\cdot ||_p$} of $M$ on our running examples of $v_1$ and $v_2$
from Example~\ref{ex:simple_integer_multiplicity_difference}.
First, similar to previous plots,
Figure~\ref{fig:pnormMultiplicityDifference}
shows the difference of convergence
rate for the $p$-norm approximation
due to higher multiplicity.
Next, Figure~\ref{fig:DplotComparison}
compares $R_v(t)$ with $D_v(t,2)$
and $D_v(t,1.5)$ for $v=v_1$ and $v=v_2$
showing comparable convergence rates.
Finally, we compare the values of
$||v||_p, L_v(t)$, $R_v(t)$, and $D_v(t,\alpha)$ when they require comparably large (in absolute value) floating point number for evaluation.
Setting $\mydef{T}$ to be the largest floating point number required, we plot these functions against $\log(T)$ in Figures~\ref{fig:Full_M_Comparison_v1} and~\ref{fig:Full_M_Comparison_v2}
for $v=v_1$ and $v=v_2$, respectively.

\begin{figure}
\includegraphics[scale=0.065]{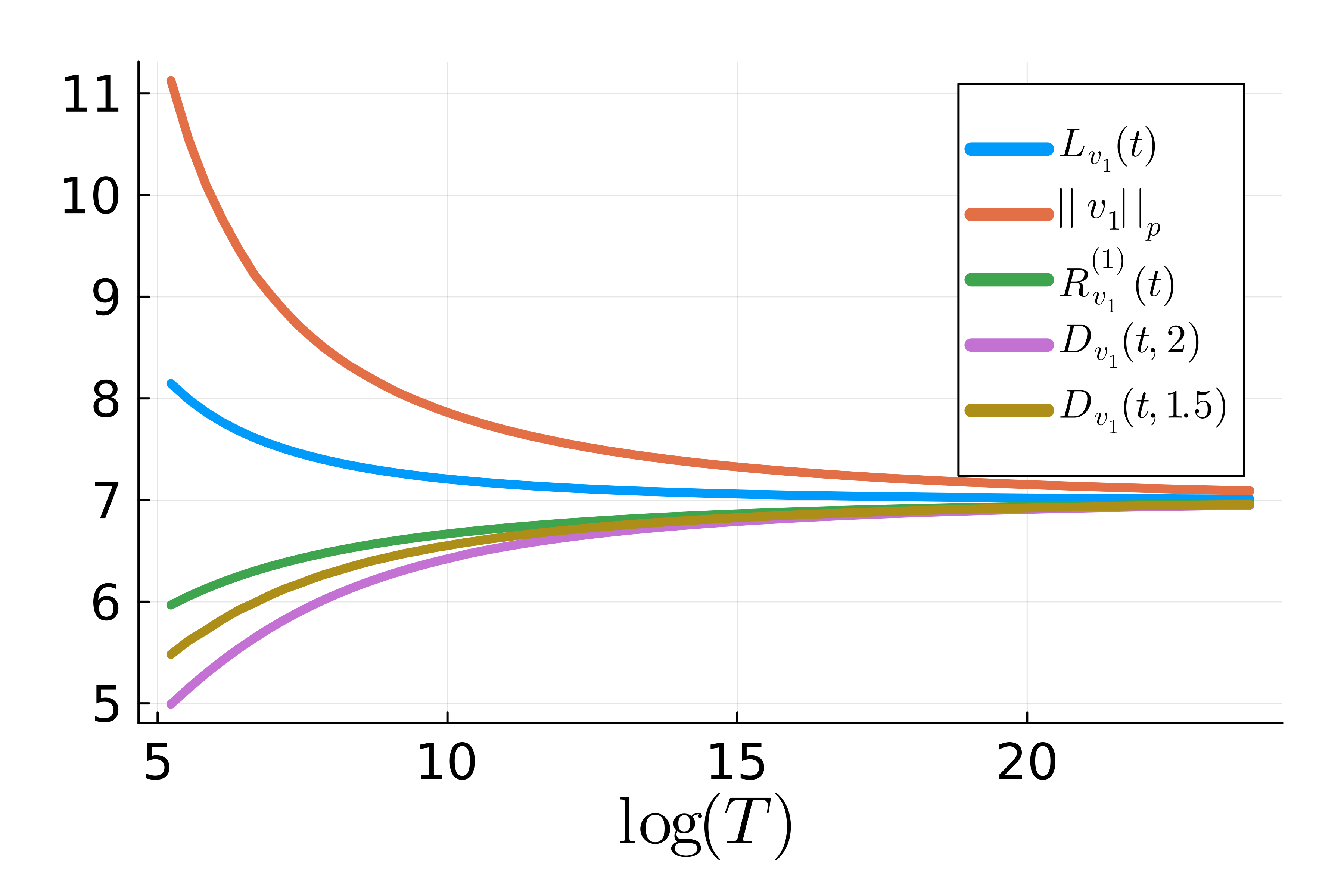}
\caption{A comparison of the smooth approximations $L_{v_1}(t),R_{v_1}(t),||v_1||_p, D_{v_1}(t,2)$, and $D_{v_1}(t,1.5)$ of the maximum of $v_1$, plotted against the natural logarithm of $T$, the required absolute value of floating point numbers for evaluation.}
\label{fig:Full_M_Comparison_v1}
\end{figure}
\begin{figure}
\includegraphics[scale=0.065]{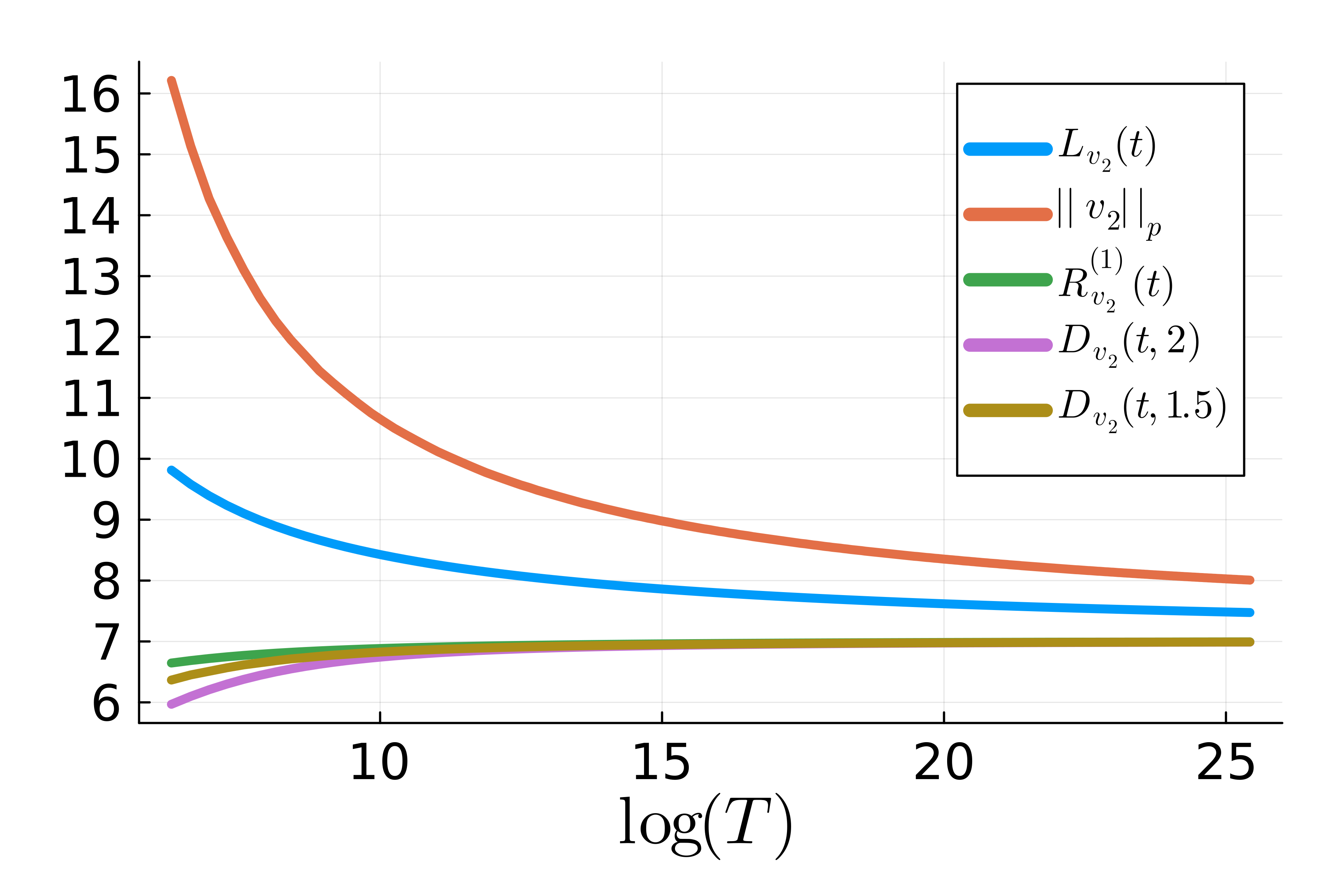}
\caption{A comparison of the smooth approximations $L_{v_1}(t),R_{v_1}(t),||v_1||_p, D_{v_1}(t,2)$, and $D_{v_1}(t,1.5)$ of the maximum of $v_2$, plotted against the natural logarithm of $T$, the required absolute value of floating point numbers for evaluation.}
\label{fig:Full_M_Comparison_v2}
\end{figure}

\end{example}

\subsection{Computing the multiplicity}
\label{secsec:ComputingMultiplicity}

Due to the simplistic nature of the expansion
in \eqref{eq:Lexpansions}
for $L_v(t)$, we consider
computing the multiplicity
$\mu_M$ for the maximum $M$.
In particular, it is easy to see from~\eqref{eq:Lexpansions}
that
\begin{equation}
\label{eq:MultiplicityAsLimit}
\mu_M = \lim_{t \to \infty} t^{L_v(t)-M} \end{equation}
Of course, using this expression requires \emph{a priori} knowledge of $M$ which can be
attained, for example, in the integer case by applying Theorem~\ref{thm:Ltbounds}.

\begin{theorem}
\label{thm:LtMultBounds}
Given $v \in \Z^n$, $\lfloor t^{L_v(t)-\lfloor L_v(t)\rfloor}\rfloor = \mu_M$
whenever
\[
t > \max\left\{n-\mu_M, \frac{\mu_M+\sqrt{\mu_M^2+4(n-\mu_M)}}{2} \right\}
\]
\end{theorem}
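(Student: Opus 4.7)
The plan is to decompose the two bounds in the $\max$ according to their roles. The second bound $t > \tfrac{\mu_M + \sqrt{\mu_M^2 + 4(n-\mu_M)}}{2}$ is exactly the $\delta = 1$ case of Theorem~\ref{thm:Ltbounds}, which guarantees that $L_v(t) \in [M, M+1)$. For $v \in \Z^n$ this immediately forces $\lfloor L_v(t)\rfloor = M$, so the quantity we are analyzing simplifies to $t^{L_v(t) - M}$. The first bound $t > n - \mu_M$ will then be used to sandwich $t^{L_v(t) - M}$ between $\mu_M$ and $\mu_M + 1$.

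Next I would expand using Proposition~\ref{prop:Expansions}, line~\eqref{eq:Lexpansions}. After reindexing so $v_1 = \cdots = v_{\mu_M} = M$, we get
\[
t^{L_v(t) - M} \;=\; \mu_M \;+\; \sum_{j = \mu_M + 1}^{n} t^{v_j - M}.
\]
The lower inequality $t^{L_v(t) - M} \geq \mu_M$ is immediate since each summand is positive. For the upper inequality, since $v \in \Z^n$ one has $v_j - M \leq -g_2 \leq -1$ for every $j > \mu_M$, hence
\[
\sum_{j = \mu_M + 1}^n t^{v_j - M} \;\leq\; (n - \mu_M)\, t^{-1}.
\]
Under the assumption $t > n - \mu_M$ this sum is strictly less than $1$, so $t^{L_v(t) - M} < \mu_M + 1$. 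Combining the two inequalities gives $\mu_M \leq t^{L_v(t) - M} < \mu_M + 1$, and taking the floor yields the claim.

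No step is really an obstacle; the only thing to be careful about is ensuring both bounds in the $\max$ are invoked correctly, since the argument breaks into two independent conditions (one controlling $\lfloor L_v(t)\rfloor$, the other controlling the tail sum). The case $\mu_M = n$ should be mentioned briefly as trivial, since then the sum is empty and $t^{L_v(t) - M} = n$ exactly, consistent with the bound.
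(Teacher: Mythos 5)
Your proposal is correct and follows essentially the same route as the paper: invoke Theorem~\ref{thm:Ltbounds} with $\delta=1$ to force $\lfloor L_v(t)\rfloor = M$, then bound $t^{L_v(t)-M}-\mu_M$ above by $(n-\mu_M)t^{-1}<1$ via the worst case $v_j-M\leq -1$. The only difference is that you spell out the expansion $t^{L_v(t)-M}=\mu_M+\sum_{j>\mu_M}t^{v_j-M}$ and the $\mu_M=n$ edge case explicitly, which the paper leaves implicit.
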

\begin{proof}
When $t>\frac{\mu_M+\sqrt{\mu_M^2+4(n-\mu_M)}}{2}$, {Theorem~\ref{thm:Ltbounds}} provides that $\lfloor L(t) \rfloor = M$.
Using a worst-case analysis, one has
$$0\leq t^{L_v(t)-M} - \mu_M \leq (n-\mu_M) t^{-1}$$
with the worst-case upper bound below $1$
when $t > n - \mu_M$.
\end{proof}

\begin{example}
Continuing with $v_1$ and $v_2$ from
Example~\ref{ex:simple_integer_multiplicity_difference},
Theorem~\ref{thm:LtMultBounds} provides
$t^{L_{v_1}(t)-\lfloor L_{v_1}(t)\rfloor} \in [1,2)$ for $t> 6$
and
$t^{L_{v_2}(t)-\lfloor L_{v_2}(t)\rfloor} \in [5,6)$ for $t> 6$
with
Figure~\ref{fig:multiplicityComputationCompare}
showing convergence in advance of such
worst-case~bounds.
\end{example}

\begin{figure}[!hb]
\includegraphics[scale=0.065]{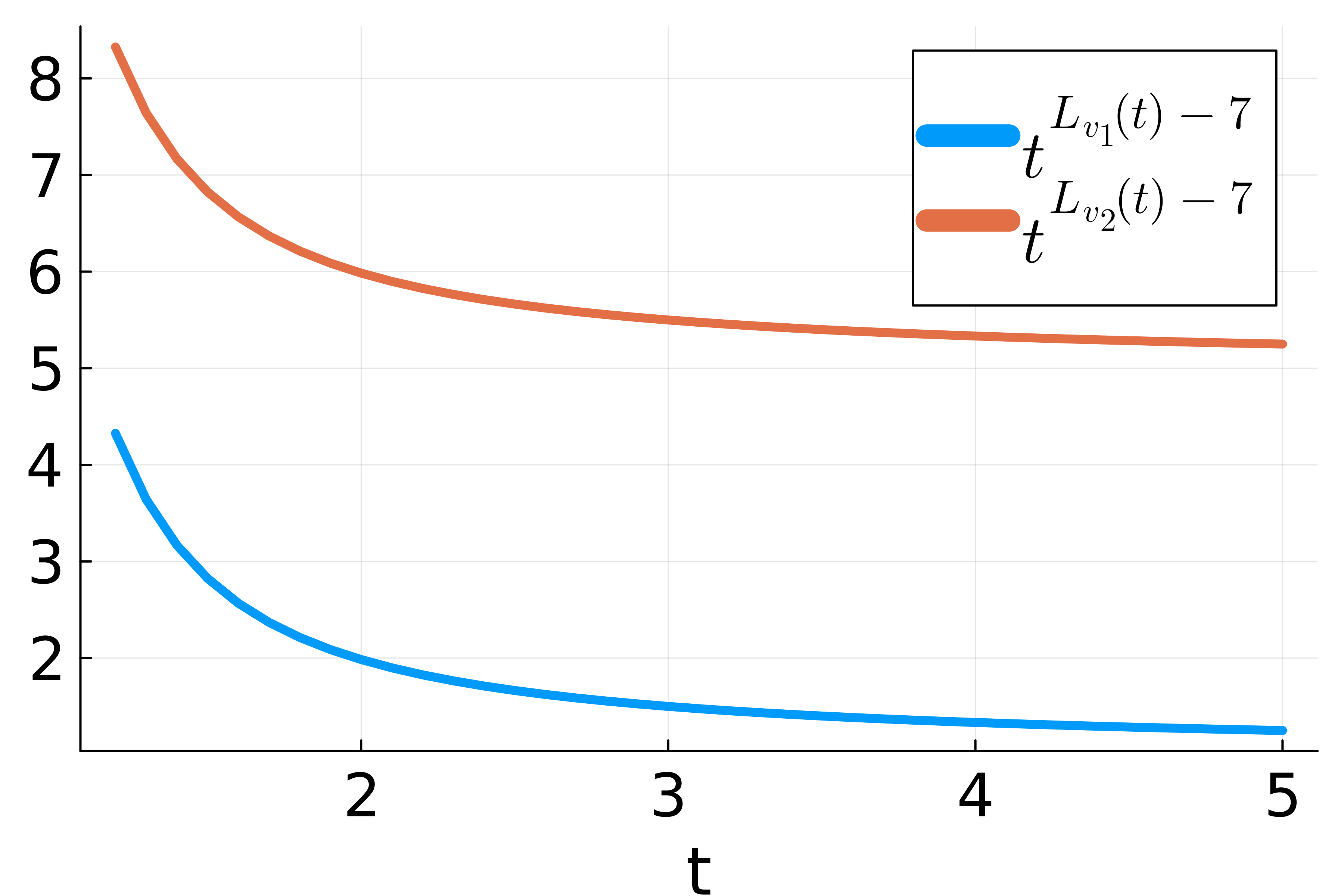}
\caption{The graphs of $t^{L(t)-M}$ applied to $v_1$ and $v_2$ from  Example~\ref{ex:simple_integer_multiplicity_difference}.
}
\label{fig:multiplicityComputationCompare}
\end{figure}

\subsection{Combining \texorpdfstring{$R^{(k)}_v(t)$}{Rkv(t)} to improve convergence and compute \texorpdfstring{$g_2$}{g2}}
\label{secsec:ComputingOther}
Since all of the higher-order derivatives $R_v^{(k)}(t)$ have the same convergence
rate, one can combine them in various
ways to increase the convergence rate
as well as extract other information
about $v$.  The following demonstrates
a higher-order approximation of~$M$
along with approximating $g_2$. The computation of $g_2$ and $M$ produces, as a byproduct, the second largest element of $v$, namely~\mbox{$w_2 = M-g_2$}.

\begin{theorem}
\label{thm:HigherRatioOtherInformation}
For $v \in \Z^n$, we have
\begin{equation}\label{eq:CombinedRM}
\frac{2R_v^{(1)}(t)R_v^{(3)}(t) -
R_v^{(2)}(t)\left(R_v^{(1)}(t) + R_v^{(2)}(t)\right)
}{
R_v^{(1)}(t) -
3R_v^{(2)}(t) +
2R_v^{(3)}(t)} = M + O(t^{-g_2-1})
\end{equation}
and
\begin{equation}\label{eq:CombinedRG2}
\frac{R_v^{(1)}(t) - 3R_v^{(2)}(t) +
2R_v^{(3)}(t)}{R_v^{(2)}(t) - R_v^{(1)}(t)}
= g_2 + O(t^{-1}).
\end{equation}
\end{theorem}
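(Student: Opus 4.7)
The plan is to substitute the series expansion \eqref{eq:RkExpansions} into the left-hand side of each identity, track only the leading orders in $t^{-1}$, and verify two combinatorial coincidences. Writing $R_v^{(k)}(t) = M - E_k(t)$ with $E_k(t) = \sum_{j\geq g_2} j\binom{j+k-1}{k-1}\alpha_j t^{-j}$, let $A_k$ denote the coefficient of $t^{-g_2}$ in $E_k$, so that $A_1 = g_2\alpha_{g_2}$, $A_2 = g_2(g_2+1)\alpha_{g_2}$, and $A_3 = \tfrac{1}{2}g_2(g_2+1)(g_2+2)\alpha_{g_2}$. Note that $\alpha_{g_2}=\mu_{w_2}/\mu_M$ is nonzero whenever $\ell\geq 2$ (the statement being vacuous otherwise).

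For identity \eqref{eq:CombinedRG2}, the denominator $R_v^{(2)}(t) - R_v^{(1)}(t) = E_1 - E_2$ has leading term $(A_1-A_2)t^{-g_2} = -g_2^2\alpha_{g_2}t^{-g_2}$. The numerator $R_v^{(1)}(t) - 3R_v^{(2)}(t) + 2R_v^{(3)}(t) = -E_1 + 3E_2 - 2E_3$ has constant term $(1-3+2)M = 0$ and leading $t^{-g_2}$ coefficient
\[
-A_1 + 3A_2 - 2A_3 = -g_2\alpha_{g_2}\bigl[1 - 3(g_2+1) + (g_2+1)(g_2+2)\bigr] = -g_2^3\alpha_{g_2},
\]
where the bracket simplifies to $g_2^2$. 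Dividing and incorporating the next-order terms (which contribute relative errors of size $t^{-1}$) yields $g_2 + O(t^{-1})$.

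For identity \eqref{eq:CombinedRM}, denote the denominator by $D$ and the numerator by $N$. Algebraic rearrangement gives
\[
N = 2(M-E_1)(M-E_3) - (M-E_2)\bigl((M-E_1)+(M-E_2)\bigr) = M\cdot D + Q,
\]
where $Q = 2E_1E_3 - E_1E_2 - E_2^2$. Thus $N/D = M + Q/D$, and since $D$ has leading order $-g_2^3\alpha_{g_2}t^{-g_2}$ as above, the conclusion $N/D = M + O(t^{-g_2-1})$ is equivalent to $Q = O(t^{-2g_2-1})$. Here is where the key cancellation occurs: the coefficient of $t^{-2g_2}$ in $Q$ equals
\[
2A_1A_3 - A_1A_2 - A_2^2 = g_2^2\alpha_{g_2}^2(g_2+1)\bigl[(g_2+2) - 1 - (g_2+1)\bigr] = 0.
\]
This identity, rather than any delicate estimate, is the main obstacle, and it is what forces the improved order of approximation.

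Thus the bracket vanishes identically in $g_2$, upgrading $Q$ from $O(t^{-2g_2})$ to $O(t^{-2g_2-1})$. Since $2g_2+1 \geq g_2+2 > g_2+1$ for $g_2\geq 1$, and combined with $D^{-1} = O(t^{g_2})$, we obtain $Q/D = O(t^{-g_2-1})$, completing the proof of \eqref{eq:CombinedRM}. The remaining work is purely mechanical: the bounded coefficients $\alpha_j$ (from Proposition~\ref{prop:LLexpansion}) ensure uniform convergence of the tails near $t=\infty$, so the $O$-notation is legitimate.
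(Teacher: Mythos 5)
Your proof is correct and follows the same route as the paper: expand each $R_v^{(k)}(t)$ to leading order in $t^{-g_2}$ via Proposition~\ref{prop:LLexpansion} and \eqref{eq:RkExpansions}, then verify the algebra. The paper compresses this to ``direct symbolic elimination,'' whereas you helpfully make the mechanism explicit via the decomposition $N = M\cdot D + Q$ and the vanishing of $2A_1A_3 - A_1A_2 - A_2^2$, which is exactly the cancellation the paper's one-line remark is hiding.
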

\begin{proof}
From Proposition~\ref{prop:LLexpansion} and \eqref{eq:RkExpansions},
$$\begin{array}{rcl}
R_v^{(1)}(t) &=& M - g_2\dfrac{\mu_{w_2}}{\mu_M}t^{-g_2} + O(t^{-g_2-1}), \\[0.15in]
R_v^{(2)}(t) &=& M - g_2(g_2+1)\dfrac{\mu_{w_2}}{\mu_M}t^{-g_2}+O(t^{-g_2-1}), \\[0.15in]
R_v^{(3)}(t) &=& M-\dfrac{g_2(g_2+1)(g_2+2)}{2} \dfrac{\mu_{w_2}}{\mu_M}t^{-g_2}+O(t^{-g_2-1})
\end{array}$$
and so the result follows by direct symbolic elimination.
\end{proof}

\begin{example}
We illustrate Theorem~\ref{thm:HigherRatioOtherInformation}
using $v_1$ and $v_2$ from Example~\ref{ex:simple_integer_multiplicity_difference}.
Figure~\ref{fig:RCombinedV1V2}
compares the convergence of
$R_v^{(1)}(t)$, $R_v^{(2)}(t)$, $R_v^{(3)}(t)$, and the combined formula in \eqref{eq:CombinedRM}
for $v=v_1$ and $v=v_2$ to $M=7$ for both.
For both cases, one sees faster convergence
as expected from \eqref{eq:CombinedRM}.
Additionally, Figure~\ref{fig:G2Approximations}
shows the convergence of the combined formula
in \eqref{eq:CombinedRG2}
for $v_1$ and $v_2$ to $g_2=1$ for both.

\begin{figure}[!htpb]
\includegraphics[scale=0.055]{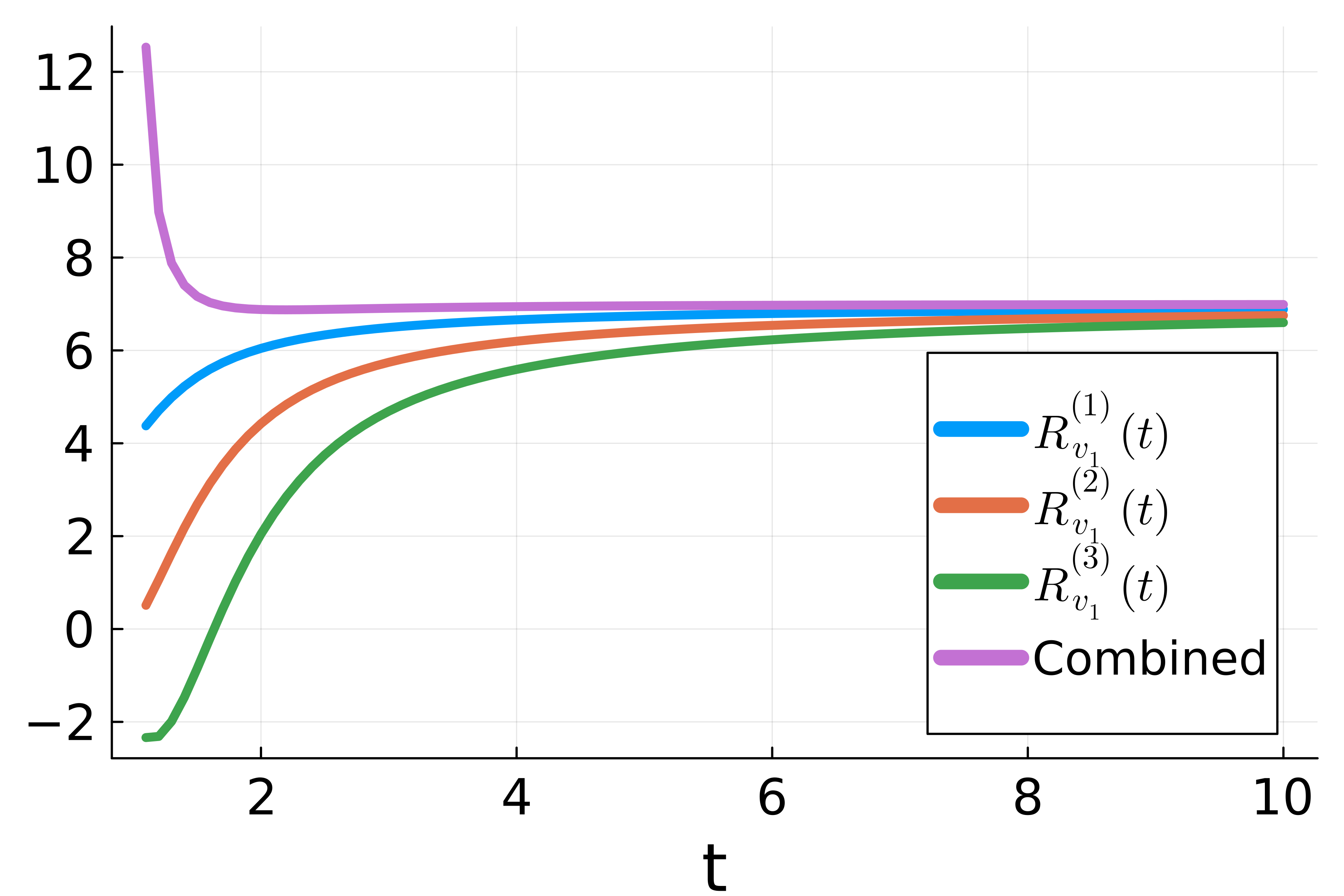} \quad \quad 
\includegraphics[scale=0.055]{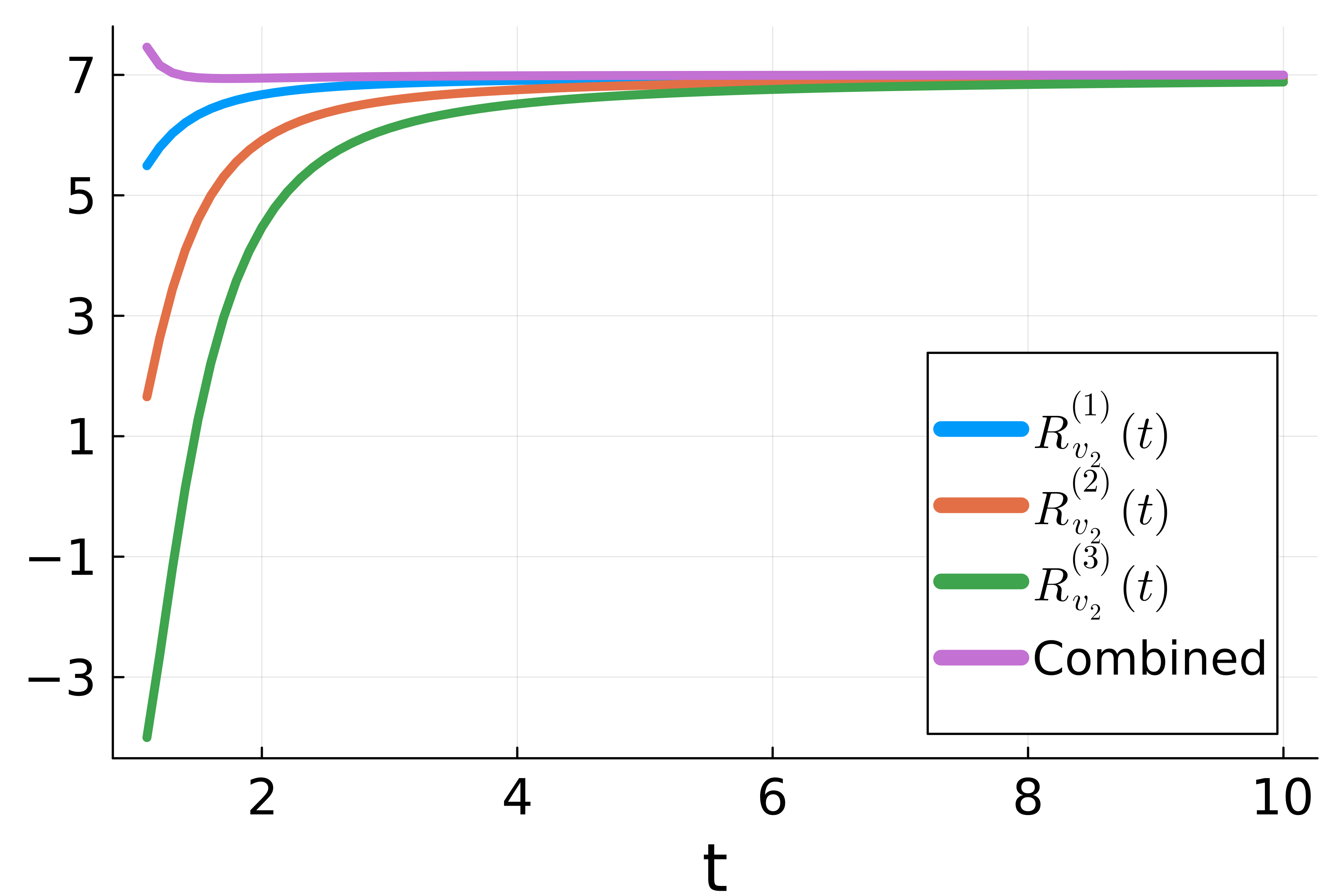}
\caption{Comparison of various methods
to approximate $M$ for $v_1$ and $v_2$.}
\label{fig:RCombinedV1V2}
\end{figure}

\begin{figure}[!htpb]
\includegraphics[scale=0.07]{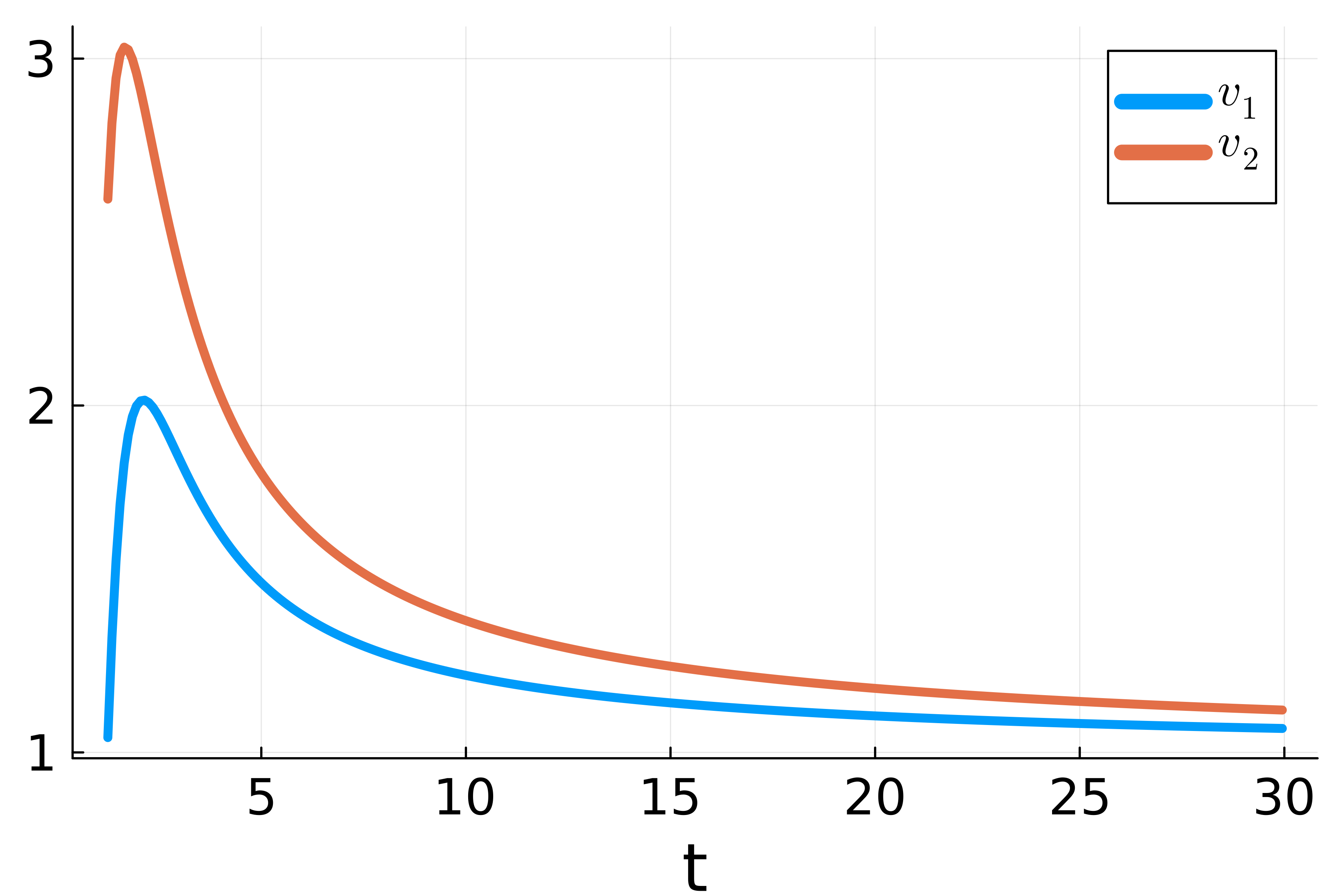}
\caption{The graphs of \eqref{eq:CombinedRG2}
for $v_1$ and $v_2$ which converge
to $g_2=1$ for both cases.}
\label{fig:G2Approximations}
\end{figure}

\end{example}

\begin{remark}
\label{rem:RintermsofDD}
The functions $R_v^{(k)}(t)$ are linear combinations of the derivatives $\mathcal D^{(k)}(t) = \frac{d^k}{du^k} \mathcal L_v(e^u)$, e.g.,
\begin{align*}
R^{(1)}(t) &= \mathcal D^{(1)}(t), \\
R^{(2)}(t) &= -\mathcal D^{(1)}(t) + \mathcal D^{(2)}(t), \\
R^{(3)}(t) &= \frac{2 \mathcal D^{(1)}(t) - 3 \mathcal D^{(2)}(t) + \mathcal D^{(3)}(t)}{2}, \\
R^{(4)}(t) &= \frac{-6\mathcal D^{(1)}(t) + 11 \mathcal D^{(2)}(t) -6 \mathcal D^{(3)}(t) + \mathcal D^{(4)}(t)}{6}.
\end{align*}
In particular, the linear transformation
that maps the first $r$ values of $\mathcal D^{(k)}(t)$ to the first $r$ values of $R^{(k)}(t)$ is represented by an $r \times r$ lower triangular matrix $A^{(r)}$.
For $j\leq i$, the $(i,j)$-entry of
$A^{(r)}$ is
$$\frac{(-1)^{i+1}}{(i-1)!}\mathcal S_{ij}$$ where $\mydef{\mathcal S_{ij}}$ is a Stirling number of the first kind. For example,
\[
A^{(4)} = \begin{bmatrix} (-1)^2  1 & 0 & 0 & 0 \\ (-1)^3 1 & (-1)^3 (-1) & 0 & 0 \\ \left(\frac{(-1)^4}{2}\right)  2 & \left(\frac{(-1)^4}{2}\right)  (-3) & \left(\frac{(-1)^4}{2}\right) 1 & 0  \\
\left(\frac{(-1)^5}{6} \right) 6 &\left(\frac{(-1)^5}{6} \right) (-11)&\left(\frac{(-1)^5}{6}  \right)6&\left(\frac{(-1)^5}{6} \right) (-1)
\end{bmatrix}.
\]
\end{remark}

\section{The tropical viewpoint}
\label{sec:Tropical}

We interpret
our previous results geometrically using tools from \emph{tropical geometry}.
We stress that throughout this section, we consider specific families of tropical constructions which exist more generally. Namely, the varieties we consider are graphs of univariate Laurent polynomials with positive coefficients.  Such varieties are quite special and so the results of this section may not hold in the more general setting. For an introduction to tropical geometry, we invite the interested reader to consult the standard reference \cite{MacSturm}.

To utilize the tropical geometry framework,
we assume throughout this section
that $v \in \mathbb{Z}^n$,
define $\mydef{\C^\times}=\C\backslash\{0\}$, and consider the function
\begin{align*}
\varphi: \C^\times &\to \mathbb{C}^2\\
t &\mapsto F_v(t).
\end{align*}
Let $\mydef{X_v}$ be the graph of $\varphi$ intersected with $(\C^\times)^2$. We note that $X_v$ is the set of zeros of the polynomial $\mydef{\mathcal F_v(t,y)} = y-F_v(t)$: $$X_v = \{(t,y) \in (\C^\times)^2 \mid \mathcal F_v(t,y) = 0\} \subset (\C^\times)^2.$$

The \emph{Newton polygon} of $\mathcal F_v(t,y)$ is the convex hull $\mydef{\mathcal N(\mathcal F_v(t,y))}$ of the exponent vectors of $\mathcal F_v(t,y)$. In this case, $\mathcal N(\mathcal F_v(t,y))$ is simply the triangle $\mydef{\Delta_v}$ with vertices $(M,0)$, $(\min(v),0)$, and $(0,1)$
as illustrated in Figure~\ref{fig:NewtonAmoebaTrop}(a).
The union of the outer normal rays of $\Delta_v$ along with the origin form a \emph{polyhedral fan} called the \mydef{tropicalization} of $X_v$,  denoted $\mydef{\textrm{trop}(X_v)}$
and illustrated in Figure~\ref{fig:NewtonAmoebaTrop}(c).
The fan $\textrm{trop}(X_v)$ is a \emph{tropical curve} which encodes the asymptotic behavior of $X_v$ near the coordinate axes $\mathbb{C}^2 \backslash (\mathbb{C}^\times)^2$. Note that in our specific situation, the Newton polytope $\mathcal N(\mathcal F_v(t,y))$, and hence the tropical curve $\textrm{trop}(X_v)$, depends only on $\min(v)$ and $\max(v)$.

\begin{figure}[!htpb]
\includegraphics[scale=0.08]{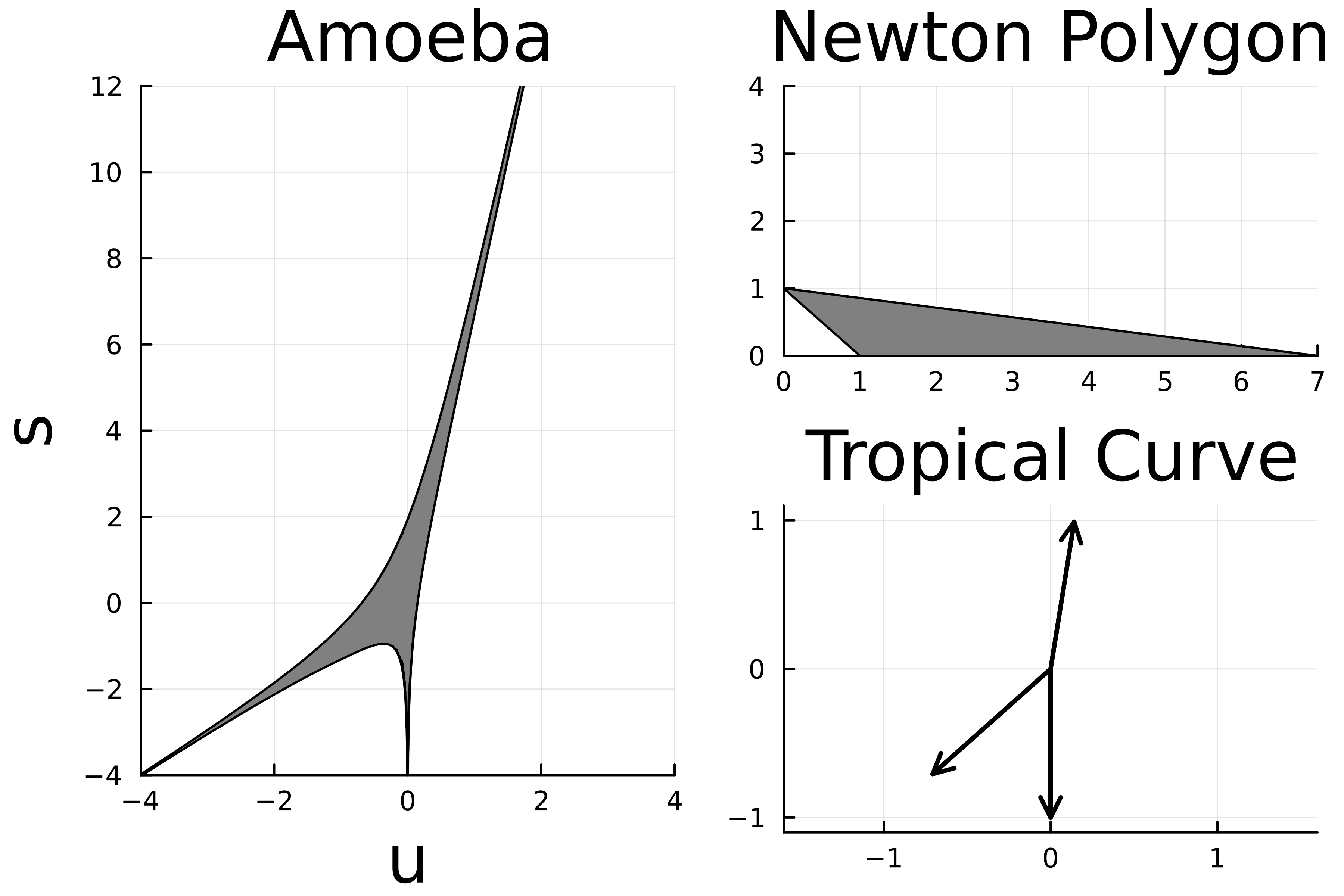}
\caption{(a) The Newton polygon of $\mathcal F_{v_1}(t,y)$ where $v_1$ is as in Example \ref{ex:simple_integer_multiplicity_difference} (b) The amoeba $\mathcal A(X_{v_1})$ (c) The tropical variety $\trop(X_{v_1})$.}
\label{fig:NewtonAmoebaTrop}
\end{figure}

An alternative construction of $\textrm{trop}(X_v)$, due to Bergman \cite{Bergman}, involves the image $\mydef{\mathcal A_\tau(X_v)}$ of $X_v$ under the log-absolute value map:
\begin{align*}
\mydef{\textrm{Log}_\tau|\cdot |}: (\C^\times)^2 &\to \mathbb{R}_{u,s}^2 \\
(t,y) &\mapsto (\log_\tau(|t|),\log_\tau(|y|)).
\end{align*}
The set $\mathcal A_\tau(X_v)$ is called the \mydef{$\tau$-amoeba} of $X_v$. We remark that we use $\mydef{u}$ and $\mydef{s}$ for coordinates of the codomain and that the overlap of the symbol $u$ with previous sections is intentional.  Undecorated, the notation $\mathcal A(X_v) \subseteq \mathbb{R}^2_{u,s}$ refers to the $e$-amoeba of $X_v$
as illustrated in Figure~\ref{fig:NewtonAmoebaTrop}(b).  Since $\mathcal A_\tau(X_v) = \frac{1}{\log(\tau)}\mathcal A(X_v)$, the set $\mathcal A(X_v)$ contains all of the information about all of the amoebas of $X_v$. Since the absolute values of coordinates of points in $X_v$ may be arbitrarily large or small, the set $\mathcal A(X_v)$ is unbounded. The portions which approach infinity are loosely referred to as the \emph{tentacles} of the amoeba.
As $\tau \to \infty$, these tentacles limit to the rays of $\textrm{trop}(X_v)$. In this sense, $\textrm{trop}(X_v)$ contains asymptotic information about $X_v$ and is sometimes referred to as the \emph{logarithmic limit set} of the
amoeba of~$X_v$.

We call lines which intersect an amoeba $\mathcal A(X_v)$ in a ray \mydef{tentacle lines}. Up to translation, these rays are exactly those in $\textrm{trop}(X_v)$. Note that many tentacle lines may be associated to the same tropical ray (see the vertical rays of Figure \ref{fig:tentacleLines}).
The following elementary facts relate the Newton polygon $\mathcal N(\mathcal F_v(t,y))$, amoeba $\mathcal A(X_v)$, and tropical curve $\textrm{trop}(X_v)$. We encourage the reader to refer to Figure \ref{fig:tentacleLines}. These facts are specializations of a more general relationship between these three objects (see \cite{MacSturm} for more details).
\begin{enumerate}
\item[(1)] The tentacle lines of $\mathcal A(X_v)$ corresponding to the ray in $\textrm{trop}(X_v)$ spanned by $(0,-1)$ correspond to distinct moduli of complex roots of $F_v(t)$.
\item[(2)] When the lowest order term of  the Laurent polynomial $F_v(t)$ is a constant $c$, $\trop(X_v)$ contains the ray spanned by $(-1,0)$. There is one tentacle line of $\mathcal A(X_v)$ associated to that ray, which occurs at height $\log(c)$.
\end{enumerate}

Additionally, the following facts relate $\mathcal A(X_v)$ and the function $u \mapsto \mathcal L_{v}(e^u)$.
\begin{enumerate}
\item[(3)] The upper boundary $\mydef{\mathcal U}$ of $\mathcal A(X_v)$ is the graph of $\mathcal L_v(e^u)$.
\item[(4)] $L_v(t)=L_v(e^u) = \frac{\mathcal L_v(e^u)}{u} $ is the slope of the ray from the origin to the point $(u,\mathcal L_v(e^u)) \in \mathcal A(X_v)$.
\item[(5)] $\mathcal D^{(k)}_v(e^u) = \frac{d^k}{du^k} \mathcal L_v(e^u)$ is the $k^{\rm th}$ derivative of the function $u \mapsto \mathcal L_v(e^u)$.
\item[(6)] $\mathcal D_v^{(1)}(e^u) = R^{(1)}_v(e^u)$ is the slope of the tangent line to the boundary of the amoeba at $(u,\mathcal L_v(e^u))$.
\end{enumerate}
We point out that $(3)$
follows from the fact that all of the coefficients of $F_v(t)$ are non-negative. In general, describing the boundaries of amoebas is challenging \cite{DeWolff13}.

\begin{proposition}
\label{prop:tentacleLines}
Let $v \in \mathbb{Z}^n$. The tentacle lines of $\mathcal A(X_v)$ are given by
\begin{enumerate}
\item[(a)] $s = \log(\mu_M) + M \cdot u$
\item[(b)] $s = \log(\mu_{m})+m \cdot u$ where $m=\min(v)$.
\item[(c)] $u = \log(|\xi_i|)$ where $\{\xi_i\}_{i=1}^d\subset \mathbb{C}$ are the roots of $F(t)$.
\end{enumerate}
\end{proposition}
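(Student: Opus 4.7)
The plan is to identify the tentacles of $\mathcal A(X_v)$ with the three asymptotic regimes in which a point $(t,F_v(t))\in X_v$ can escape to infinity in $(\C^\times)^2$: either $|t|\to\infty$, $|t|\to 0$, or $|F_v(t)|\to 0$. These three regimes correspond to the three rays of $\trop(X_v)$, which are spanned by the outer normals $(1,M)$, $(-1,-m)$, and $(0,-1)$ to the three edges of the Newton polygon $\Delta_v$. Part (a) should come from the first regime, part (b) from the second, and part (c) from the third.

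For (a), I would factor out the leading power in $F_v(t)$ for large $|t|$:
\[
F_v(t) \;=\; t^M\left(\mu_M + \sum_{j:\,v_j<M} t^{v_j-M}\right),
\]
whose bracketed error is bounded in modulus by $\sum_{j:\,v_j<M} |t|^{v_j-M}\to 0$ as $|t|\to\infty$, uniformly in $\arg(t)$. Taking logarithms yields $\log|F_v(t)| = M\log|t| + \log(\mu_M) + o(1)$, so the tentacle in the $(1,M)$-direction is asymptotic to the line $s=\log(\mu_M) + Mu$. Case (b) follows identically by factoring out $t^m$ and letting $|t|\to 0$, producing the asymptote $s=\log(\mu_m) + mu$ in the $(-1,-m)$-direction.

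For (c), let $\xi\in\C^\times$ be any root of $F_v$. Writing $F_v(t) = (t-\xi)^k H(t)$ near $\xi$ with $H(\xi)\neq 0$, as $t\to\xi$ we have $|F_v(t)|\to 0$ while $|t|\to |\xi|$; hence $(\log|t|,\log|F_v(t)|)$ escapes downward along the vertical line $u=\log|\xi|$, giving a tentacle in the $(0,-1)$-direction. Two roots of equal modulus simply contribute the same vertical line, so the distinct lines that arise are exactly those listed in (c).

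The main obstacle is establishing completeness: that no other tentacle lines appear. Since $\mathcal A(X_v)$ is closed and its logarithmic limit set equals $\trop(X_v)$, every tentacle must be parallel to one of the three tropical rays above. In directions $(1,M)$ and $(-1,-m)$, the asymptotic analyses single out unique limiting lines, handling (a) and (b). In direction $(0,-1)$, fact~(1) of the preceding discussion states that vertical tentacles correspond precisely to the moduli of complex roots of $F_v$, matching (c) and closing the argument.
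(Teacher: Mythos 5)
Your proof is correct and takes a genuinely different, more direct route than the paper's. The paper handles (c) by citing its earlier fact~(1), then for (a) and (b) reduces to the normalized case $\min(v)=0$, where the constant term $\mu_m$ of $F_v$ yields $\lim_{t\to 0}\log F_v(t)=\log(\mu_m)$ and hence a horizontal tentacle at height $\log(\mu_m)$; it then observes that translating $v$ by $a\in\Z$ induces a shear on both the Newton polygon and the amoeba that preserves $s$-intercepts, which recovers the general slopes $m$ and $M$ while keeping the intercepts $\log(\mu_m)$ and $\log(\mu_M)$. Your argument instead factors $F_v$ and computes the asymptotics of $\log|F_v(t)|$ directly in each of the three escape regimes ($|t|\to\infty$, $|t|\to 0$, $t\to\xi_i$); this is more self-contained, since it re-derives what the paper cites as facts~(1) and~(2) rather than invoking them, and it makes the uniformity-in-$\arg(t)$ of the error term explicit. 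You also spell out the completeness step — that Bergman's logarithmic-limit-set theorem forces every tentacle into one of the three tropical directions, each of which you then account for — which the paper leaves implicit. Both approaches are valid; the paper's shear reduction is shorter once the $\min(v)=0$ case is in hand, while your direct factorization is more transparent and would transfer more readily to settings where the clean translation symmetry is unavailable.
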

\begin{proof}
As observed already, the tentacles in the $(0,-1)$ direction correspond to roots $\xi_i$ of $F_v(t)$ and they occur at $u=\log(|\xi_i|)$ which establishes (c). To see (a) and (b), suppose $m=\min(v)=0$. Then, $F(t)$ has a constant term of $\mu_m$ and hence
$\lim_{t\to 0} \log(F(t)) = \log(\mu_m)$. This limit indicates that $\mathcal A(X_v)$ has a horizontal tentacle occurring at $s = \log(\mu_m)$. However, translating $v$ by $a \in \Z$ amounts to sheering the Newton polygon space by $(\alpha,\beta) \mapsto (\alpha, a\alpha+\beta)$ and the amoeba space by $(u,s) \mapsto (u-as,s)$. In particular, this transformation does not change the $s$-intercepts of the tentacle lines. Hence, the tentacle line associated to the minimum of $v$ has the equation $s=\log(\mu_m)+m \cdot u$ whereas the line associated to the maximum $M$ has the equation $s=\log(\mu_M)+M \cdot u$.
\end{proof}

\begin{figure}[!b]
\includegraphics[width=12cm,height=8cm]{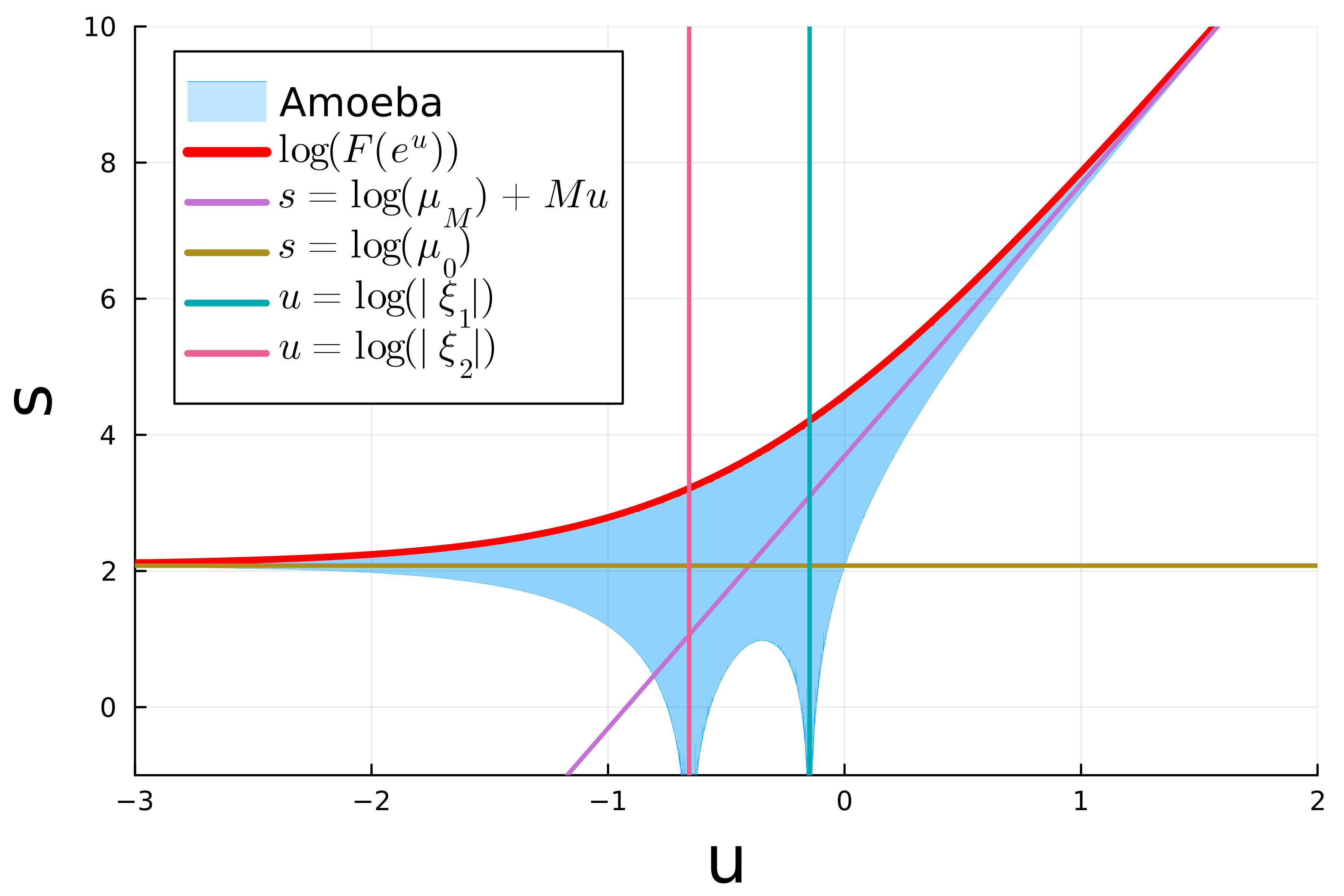}
\caption{The amoeba $\mathcal A(X_v)$ of the graph $X_v$ where $v$ is as in Example \ref{ex:TropicalExample} along with its tentacle lines.  }
\label{fig:tentacleLines}
\end{figure}
\begin{figure}[!htpb]
\includegraphics[width=12cm,height=8cm]{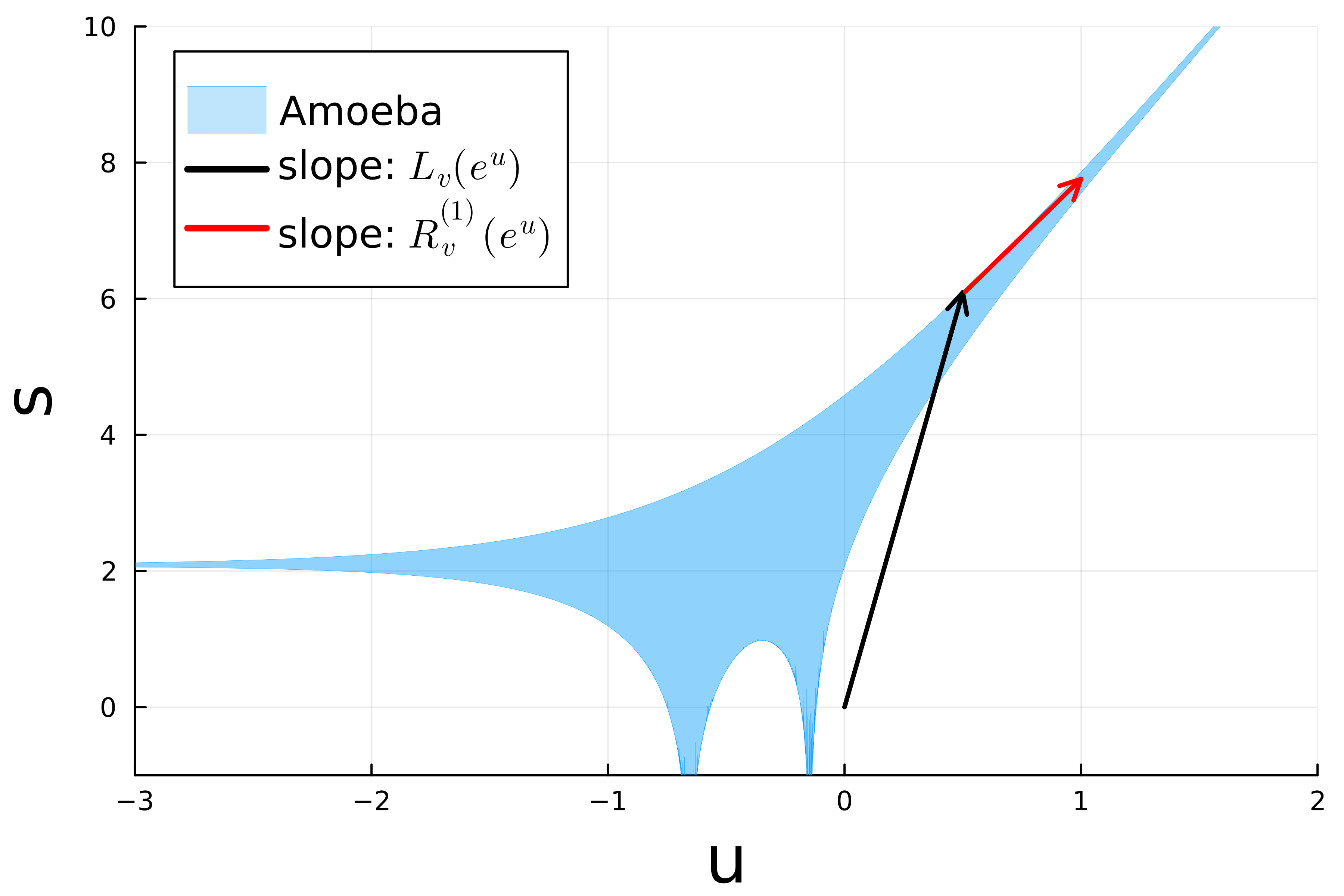}
\caption{The amoeba $\mathcal A(X_v)$ of the graph $X_v$ for $v$ as in Example \ref{ex:TropicalExample} along with the ray from the $(0,0)$ to $(u,L_v(e^u))$ and the ray from $(u,L_v(e^u))$ with slope $R^{(1)}_v(e^u) = \mathcal D^{(1)}_v(e^u)$.  }
\label{fig:geometricLvsR}
\end{figure}

Proposition \ref{prop:tentacleLines} gives a geometric interpretation of how the
multiplicity of $M$ in $v$ contributes to a slower convergence rate of $L_v(t)$ but not for $R_v(t)$: multiplicity corresponds to a translation of the tentacle line of~$\mathcal A(X_v)$ associated to the tropical ray spanned by $(1,M)$. This is geometrically displayed in Figure \ref{fig:geometricLvsR}.
\begin{example}
\label{ex:TropicalExample}
Let $v=[0_8,1_5,2_{40},3_5,4_{40}]$, where a subscript indicates multiplicity. The amoeba $\mathcal A(X_v)$ is shown in Figure \ref{fig:tentacleLines}. The polynomial $$F_v(t) = 8+5t+40t^2+5t^3+40t^4$$ has two pairs $\xi_1,\bar{\xi_1}$ and $\xi_2,\bar{\xi_2}$ of conjugate roots. Hence, there are two vertical tentacle lines of $\mathcal A(X_v)$. The tentacle line corresponding to the minimum multiplicity $\mu_m=\mu_0=8$ is horizontal at height $\log(8)$ and the equation of the remaining tentacle line is $s=\log(40)+4u$. The upper boundary of the amoeba is the image of the positive part of $X_v$ under the log-absolute value map.

Figure \ref{fig:tentacleLines} illustrates a geometric interpretation of the values of $L_v(e^u)$ and $R^{(1)}_v(e^u)$ as slopes of rays.

\end{example}

One may also interpret the Cauchy integral in Remark \ref{rem:Cauchy}:
$$
\frac{1}{2\pi \sqrt{-1}} \oint_{|t|=r} t^{-1}\cdot R_v^{(k)}(t^{-1})\cdot dt = M,$$
 in terms of tropical geometry when $k=1$. This is done via the \mydef{order map}:
\begin{align*}
\mydef{\textrm{ord}}:\mathbb{R}^2 &\to \mathbb{R}^2 \\
(u,s) &{\tiny{ \mapsto \left(\frac{1}{(2\pi\sqrt{-1})^2} \int_{\substack{\textrm{Log}|t|=u\\ \textrm{Log}|y|=s}} \frac{tF_v'(t)}{y-F_v(t)}\cdot \frac{dt dy}{t y}, \frac{1}{(2\pi\sqrt{-1})^2} \int_{\substack{\textrm{Log}|t|=u\\ \textrm{Log}|y|=s}} \frac{y}{y-F_v(t)}\cdot\frac{dt dy}{t y} \right).}}
\end{align*}
The function $\textrm{ord}$ is constant and $\mathbb{Z}^2$-valued on connected components of the complement $\mathbb{R}^2 \backslash \mathcal A(X_v)$ of the amoeba \cite{FPT2000}. In fact, $\textrm{ord}$ maps these components to distinct integer points in the Newton polygon $\Delta_v$. In particular, for any point $(u,s)$ in the bottom right complement component, the first integral
$$\frac{1}{(2\pi\sqrt{-1})^2} \int_{\substack{\textrm{Log}|t|=u\\ \textrm{Log}|y|=s}} \frac{tF_v'(t)}{y-F_v(t)}\cdot \frac{dt dy}{t y}$$
degenerates as $|y| \to 0$ (or $s \to -\infty$) to the integral
$$\frac{1}{2\pi\sqrt{-1}} \oint_{\substack{\textrm{Log}|t|=u}} \frac{tF_v'(t)}{F_v(t)} \cdot \frac{dt}{t} = \frac{1}{2\pi\sqrt{-1}} \oint_{\substack{\textrm{Log}|t|=u}} t^{-1} R_v^{(1)}(t)\cdot  dt = M$$
for sufficiently large $u$
as in \eqref{eq:CauchyIntegral}. The second integral, on the other hand, evaluates to zero. Since this value of $\textrm{ord}$ on $(u,s)$ is constant on connected components of the complement of the amoeba, this shows that $\textrm{ord}(u,s)$ evaluates to the vertex $(M,0)$ of $\Delta_v$ when $(u,s)$ is in the bottom-right component of $\mathbb{R}^2 \backslash \mathcal A(X_v)$,
in agreement with Remark \ref{rem:Cauchy}.

\section{Experiments}
\label{sec:Experiments}
\begin{figure}
\includegraphics[scale=0.421]{./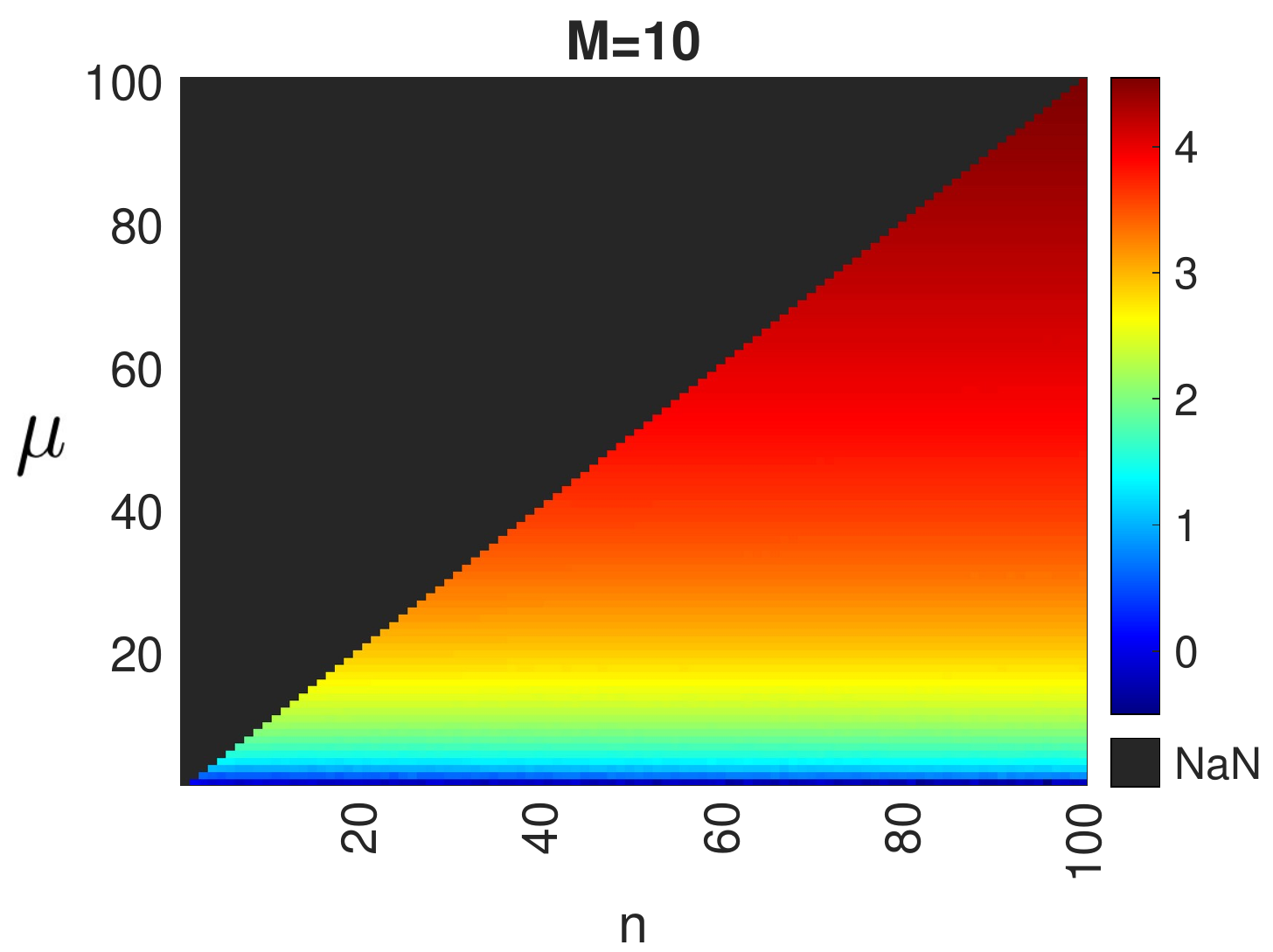}
\includegraphics[scale=0.421]{./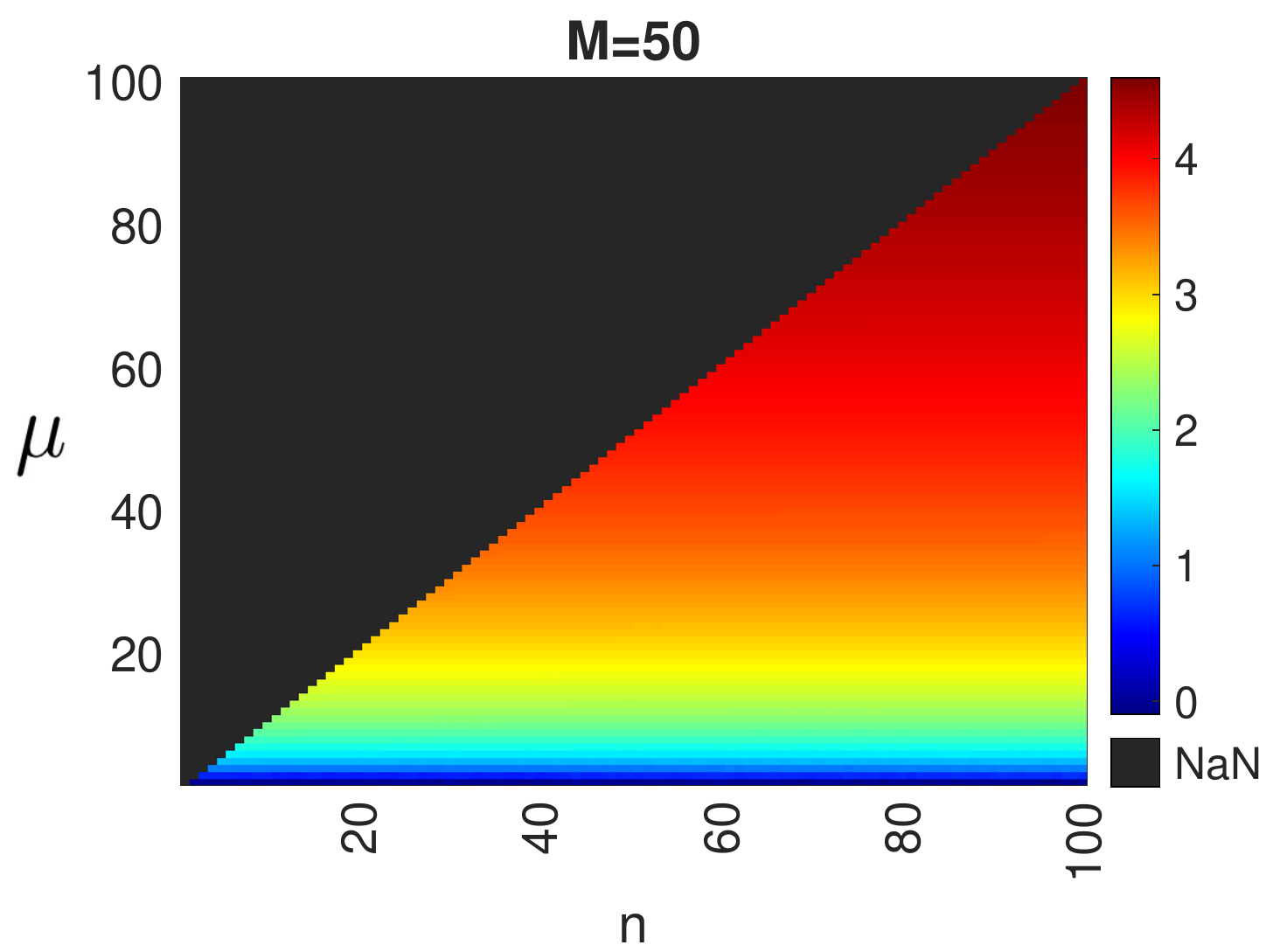}
\includegraphics[scale=0.421]{./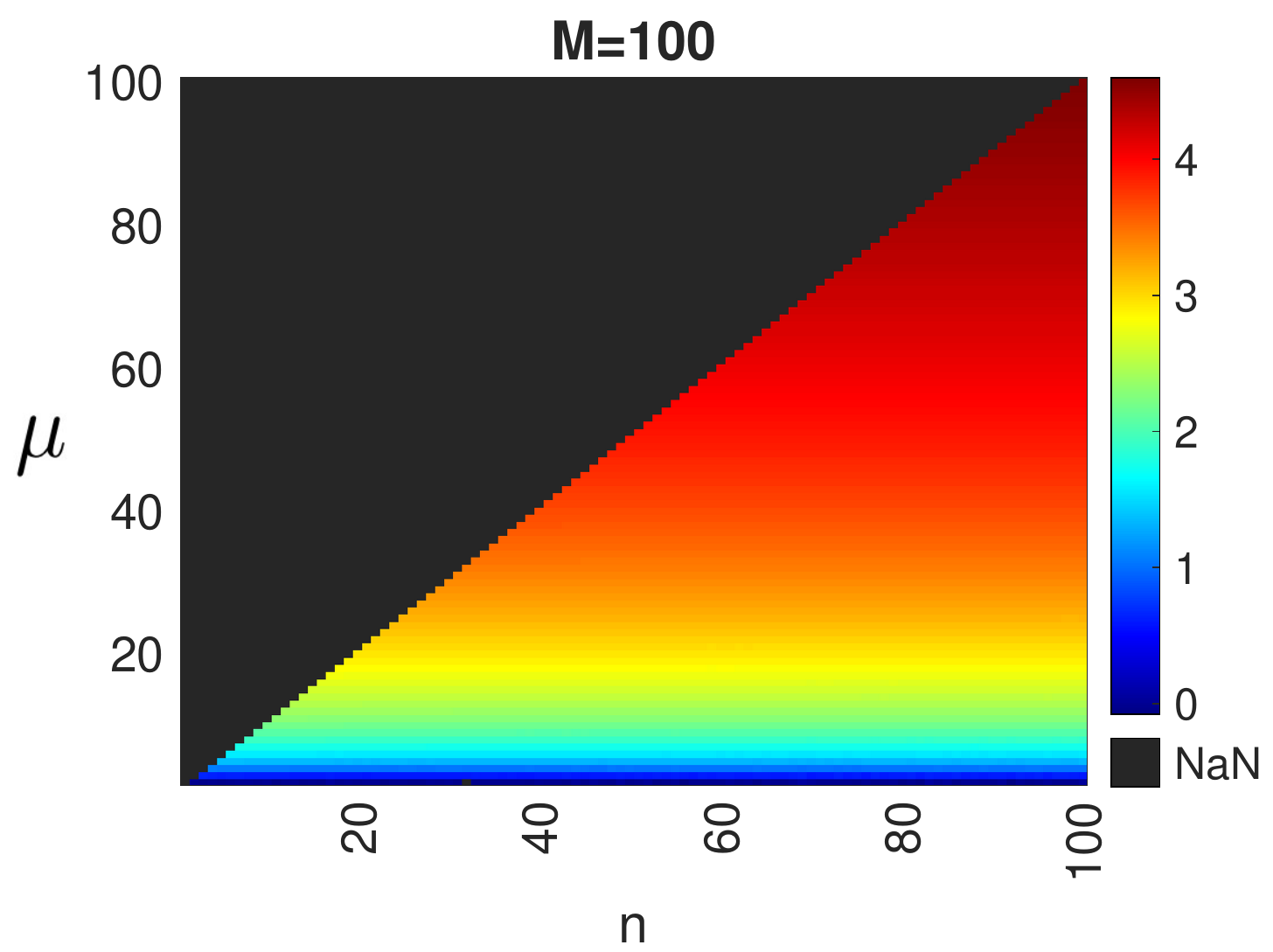}
\includegraphics[scale=0.421]{./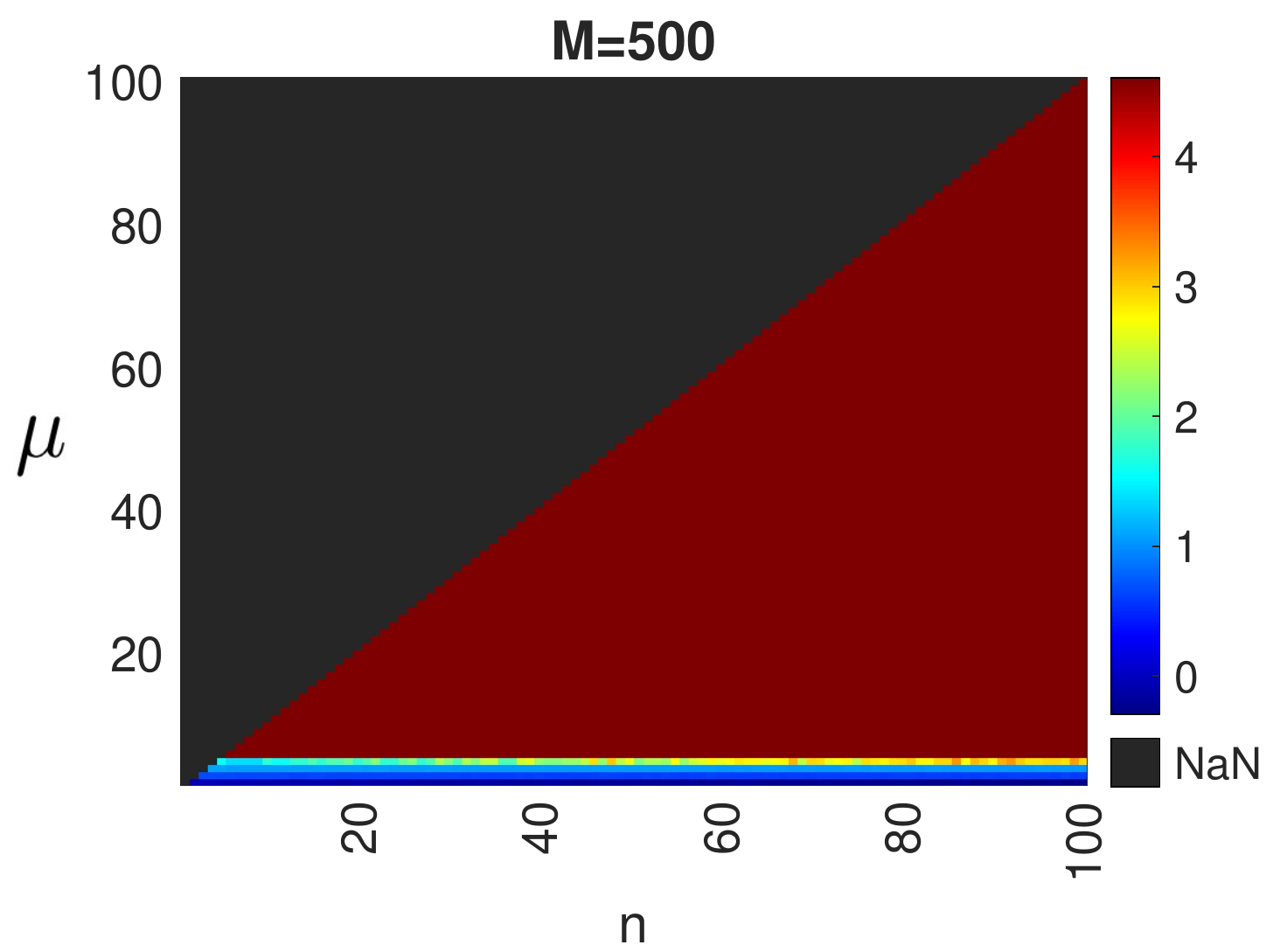}
\caption{Average results of the integer-valued vector $v$ experiments. The $x$-axis and $y$-axis correspond to $n=1,\ldots,100$ and $\mu=1,\ldots,n$ respectively. Each pixel is the average of $100$ subexperiments measuring $\log(1+t^*_{L_v}-t^*_{R_v})$ where $t^*_{L_v}$ and $t^*_{R_v}$ are the $t$-values such that the absolute error of the corresponding function is less than $1$. The maximum, $M$, are: (a) 10 (b) 50 (c) 100 (d) 500. Black pixels above the diagonal
are when $\mu > n$.}
\label{fig:experiment_integers}
\end{figure}
We compare each of the approximations discussed on a gallery of qualitatively different inputs $v$. In each of the following sections, we sample vectors $v$ from some prescribed distribution. We then compare the approximations of $M$ on these samples on average.

\subsection{Integer numbers}
We compared $L_v(t)$ and $R_v(t)$ in the integer case by defining a maximum $M$ with multiplicity $\mu$ for an integer-valued vector $v\in [1,M]^n$. That is, we defined $v$ of varying length $n=1,\dots,100$ with $M$ appearing $\mu$ times, where $\mu\leq n$ and the remaining $n-\mu$ values of $v$ were random integers sampled from $[1, M-1]$. Figure \ref{fig:experiment_integers} displays the four experiments comparing the performance of $L_v(t)$ and $R_v(t)$ for approximating a given maximum $M$, namely $M=10, 50, 100,$ and $500$, respectively. Performance is measured by $\mydef{t^*}$, the first value of $t$ to approximate $M$ up to an absolute error of $1$, so that the maximum is obtained from $L_v(t^*)$ or $R_v(t^*)$ through use of the floor or ceiling function, respectively.  Each experiment consists of $100$ subexperiments averaged over $\log(1+t^*_{L_v}-t^*_{R_v})$. The plotted values are logarithmic and offset by $1$ since, if $t^*_{L_v}=t^*_{R_v}$, then $\log(1+t^*_{L_v}-t^*_{R_v}) = \log(1) = 0$.

\begin{figure}[!htpb]
\includegraphics[scale=0.425]{./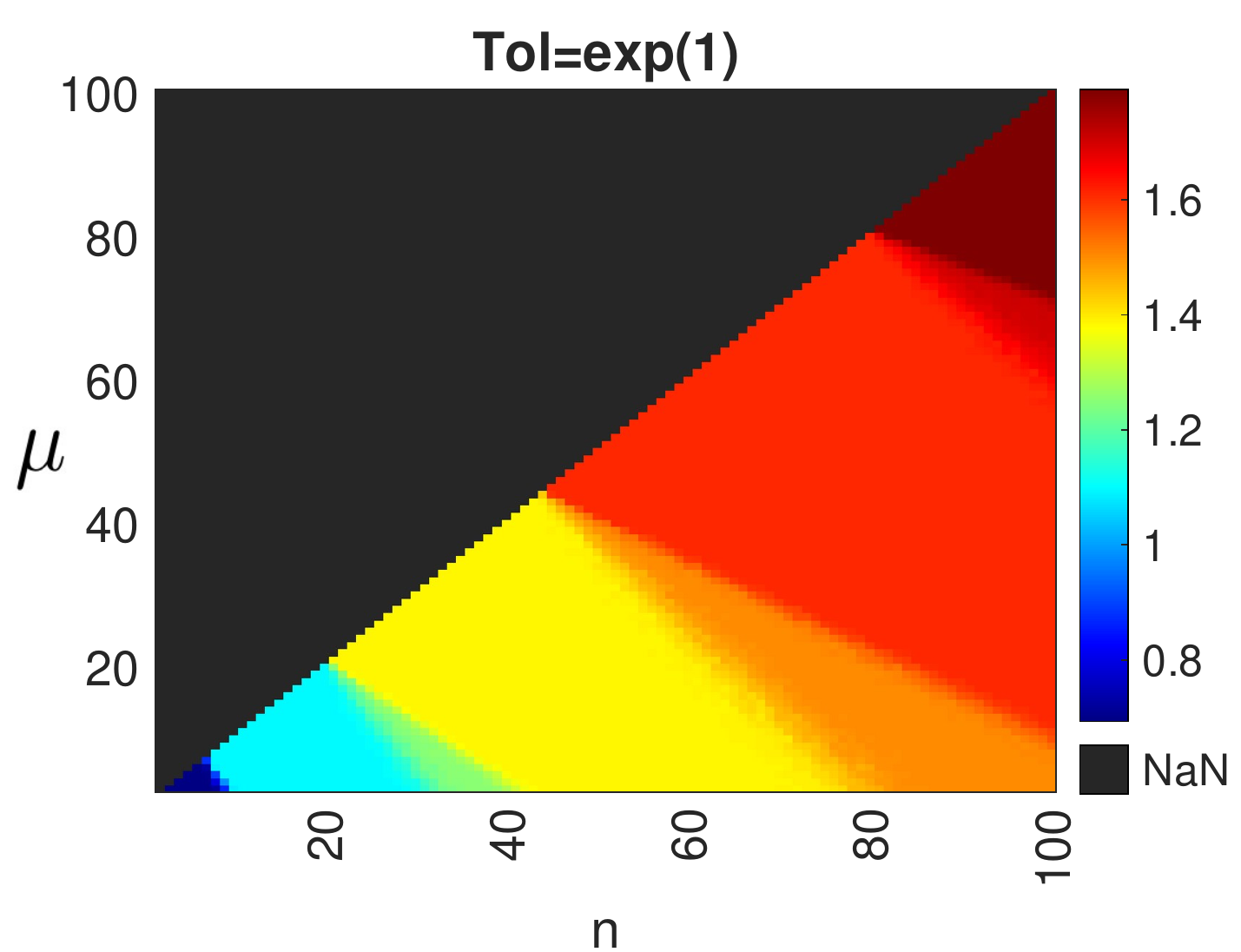}
\includegraphics[scale=0.425]{./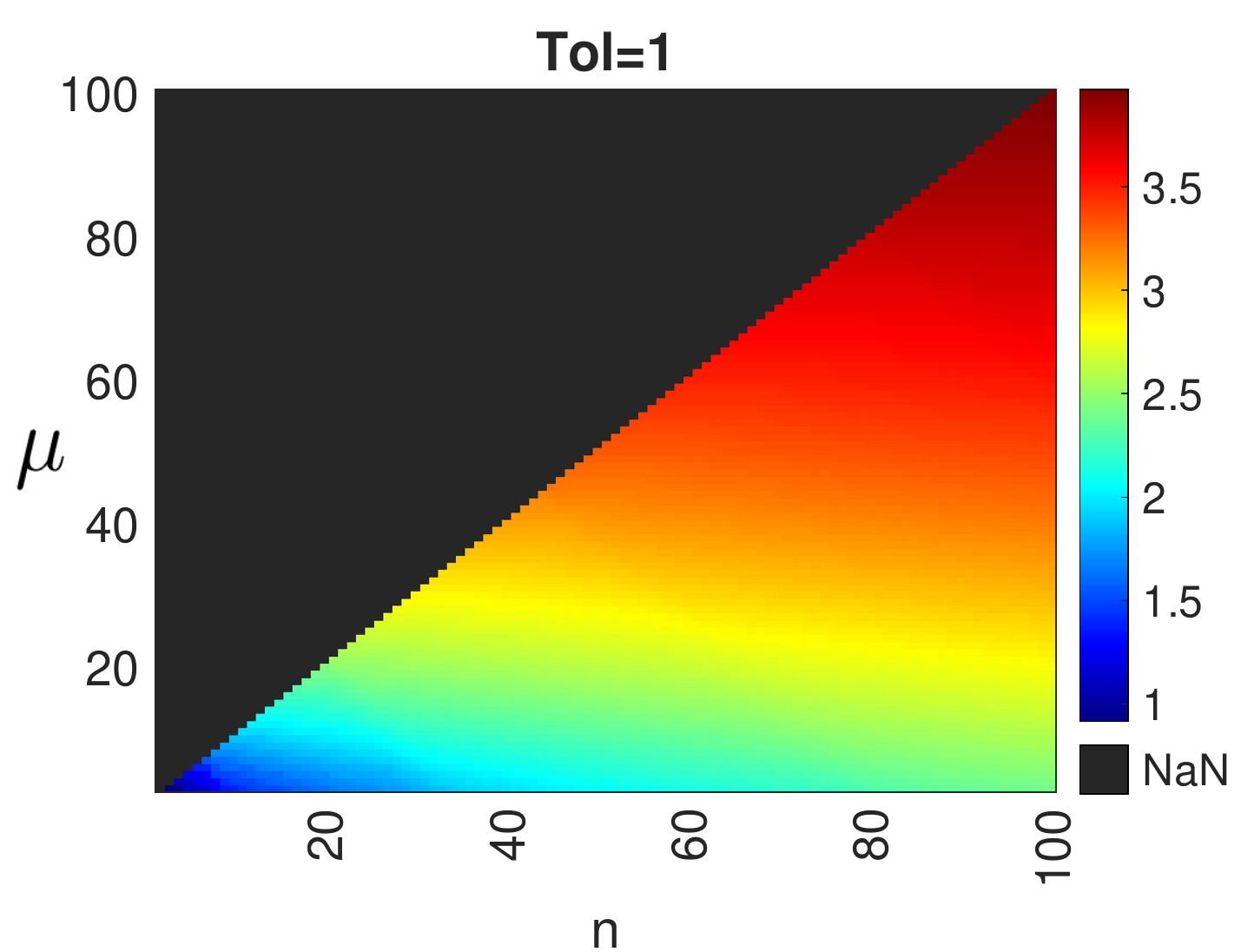}
\includegraphics[scale=0.425]{./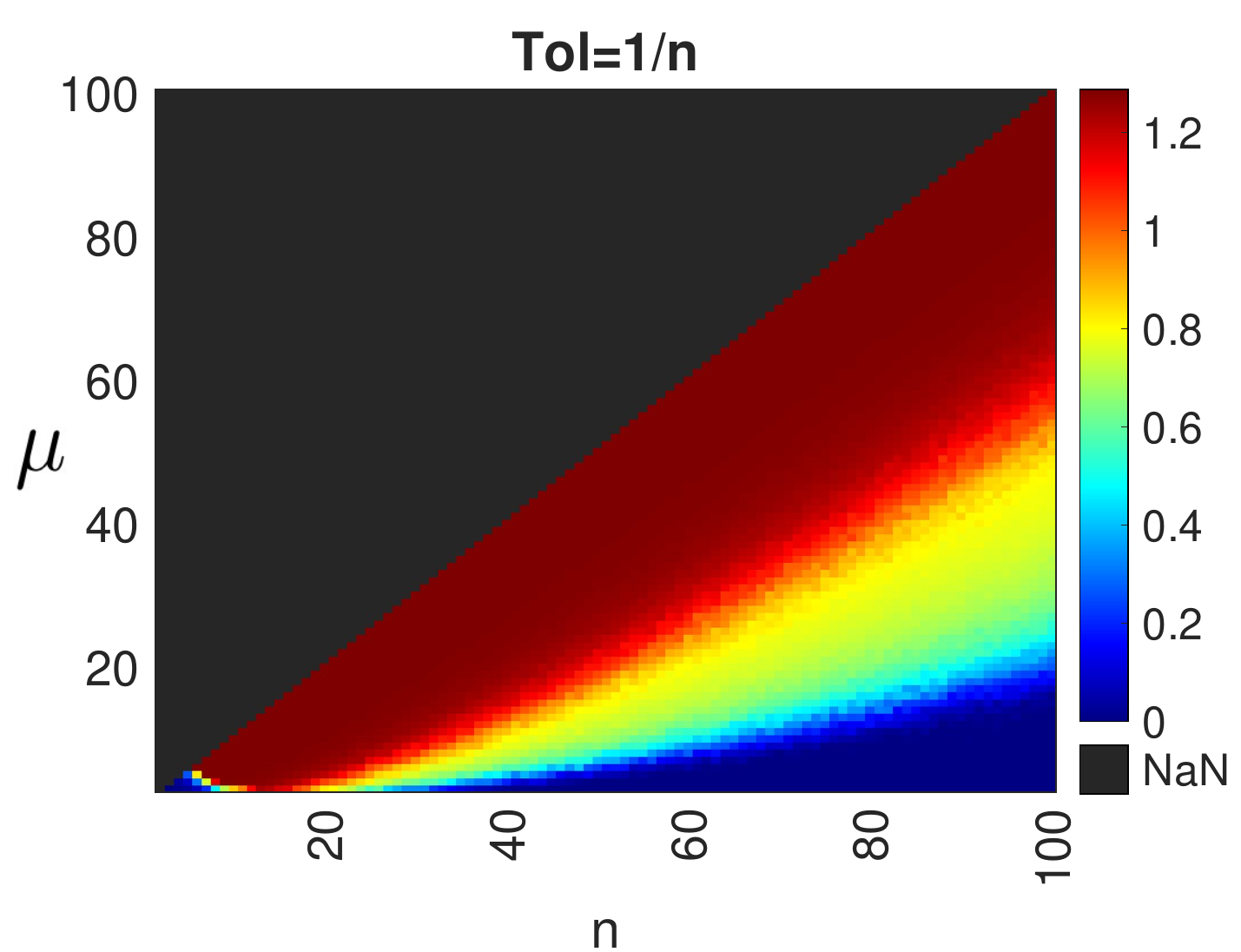}
\includegraphics[scale=0.425]{./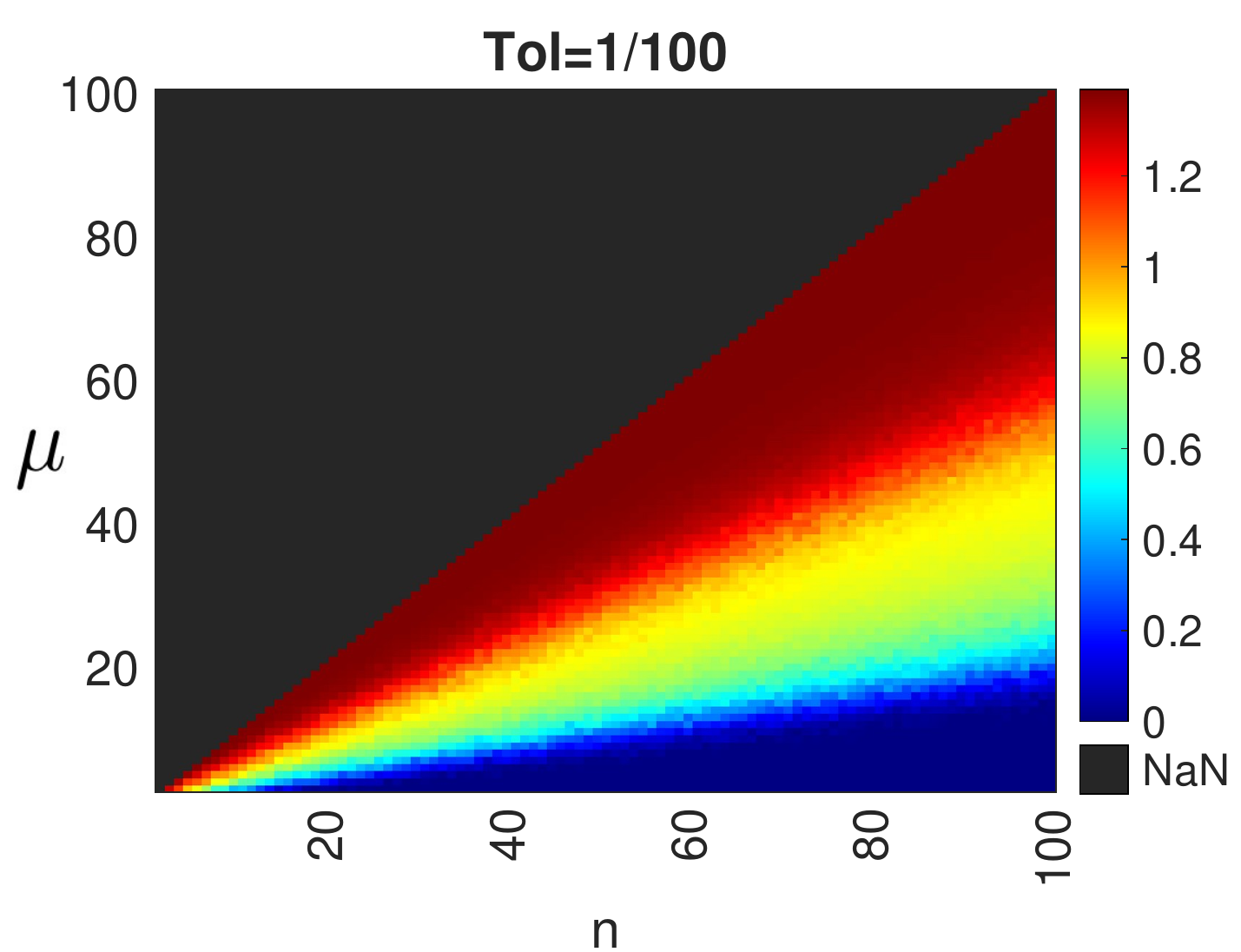}
\caption{Average results of the floating point-valued vector $v$ experiments. Each pixel at position $(n,\mu)$ represents the value of $\log(\alpha+t^*_{L_v}-t^*_{R_v})-\log(\alpha)$ averaged over $100$ subexperiments. The $x$-axis and $y$-axis correspond to $n=1,\ldots,100$ and $\mu=1,\ldots,n$ respectively. The tolerance used to define $t^*$ are absolute error less than: (a) $\textrm{exp}(1)$ (b) $1$ (c) $1/n$ (d) $1/100$. Black pixels above the diagonal are when $\mu > n$.}
\label{fig:experiment_uniform}
\end{figure}

\subsection{Uniformly distributed floating point numbers}
We repeat the experiments of the above section with floating point vectors $v \in [0,1]^n$ with $M=1$. Figure~\ref{fig:experiment_uniform} displays the results of comparing $L_v(t)$ and $R_v(t)$ on vectors whose elements are sampled from the uniform distribution on $[0,1]$. As with the previous experiments, each pixel at coordinates $(n,\mu)$ represents the value of $\log(\alpha+t^*_{L_v}-t^*_{R_v})-\log(\alpha)$, where $\mydef{t^*}$ is the first value of $t$ to approximate $M$ up to a given absolute error. To model a significant $g_2$ gap, we constructed these vectors by choosing $n-\mu$ vectors uniformly from $[0,1]$, multiplying them by $(n-1)/n$ and appending them to a vector of length $\mu$ with coordinates all equal to $1$. The figures differ only in the absolute tolerance used to define $t^*$. The value $\alpha = \min(t^*_{L_v}-t^*_{R_v})+1$ offsets the results so that when the results are averaged and plotted logarithmically, the minimum difference remains $\log(1) = 0$. The subtraction of the $\log(\alpha)$ term then better illustrates the subexperiments where $L_v(t)$ outperforms $R_v(t)$. This adjustment is accounted for in the uniform distribution examples as there are select instances in which $L_v(t)$ performs better than $R_v(t)$ by converging at a lesser $t$ value, thus the difference is non-positive and less than $-1$. This occurs most notably in the experiments with tolerance $1/n$ and $1/100$ of Figure~\ref{fig:experiment_uniform}.

\subsection{Clustering floating point numbers}
We repeat a similar experiment with floating point numbers in the presence of noise. Our goal is to identify the scenarios where it is appropriate to apply Theorem \ref{thm:Rtbounds} heuristically.
Our setup is as follows. Suppose that $5$ measurements, with values in $[0,1]$, are to be taken, but the measuring device incurs some error $\pm \epsilon$. To rectify this, each measurement is performed $20$ times. Heuristically, one may choose to apply Theorem \ref{thm:Rtbounds} with the interpretation that $v$ consists of $5$ numbers, each occurring with multiplicity $20$, with the goal of obtaining $\max(v)$ up to error $\epsilon$. In this case, Theorem \ref{thm:Rtbounds} specializes to
$t>\left(\frac{4\cdot g}{\epsilon }\right)^{\frac{1}{g}}$ where $g$ is the gap between the top two \emph{true} measurements.

After fixing $\epsilon$ and $g$, we model such a situation by the following procedure.
\begin{enumerate}
\item Pick $5$ true measurements $w_1,\ldots,w_5 \in [0,1]$ by setting $w_5=1$, $w_4=1-g$ and $w_1,w_2$ and $w_3$, sampled uniformly at random from $[0,1-g]$.
\item For each $w_i$, sample $20$ numbers uniformly from $[w_i-\epsilon, w_i+\epsilon]$. Collect all $100$ numbers in $v$.
\item Evaluate $R_v(t^*)$ for $t^* = \left( \frac{4g}{\epsilon}\right)^{\frac 1 g}$ to obtain the absolute error $\textrm{err}_{v}=|1-R(t^*)|$
\end{enumerate}
For each pair $(g,\epsilon)$, where $g=0.01,\ldots,1$ and $\epsilon=0,\ldots,1$, we repeat the above procedure $500$ times and average the error obtained in step $3$. Additionally, we deem an approximation a \emph{success} if the error is smaller than $\epsilon$. The two figures in Figure \ref{fig:clusterheatmap} display, for each pair $(g,\epsilon)$, the average error and number of successes.

\begin{figure}[htpb]
\includegraphics[scale=0.425]{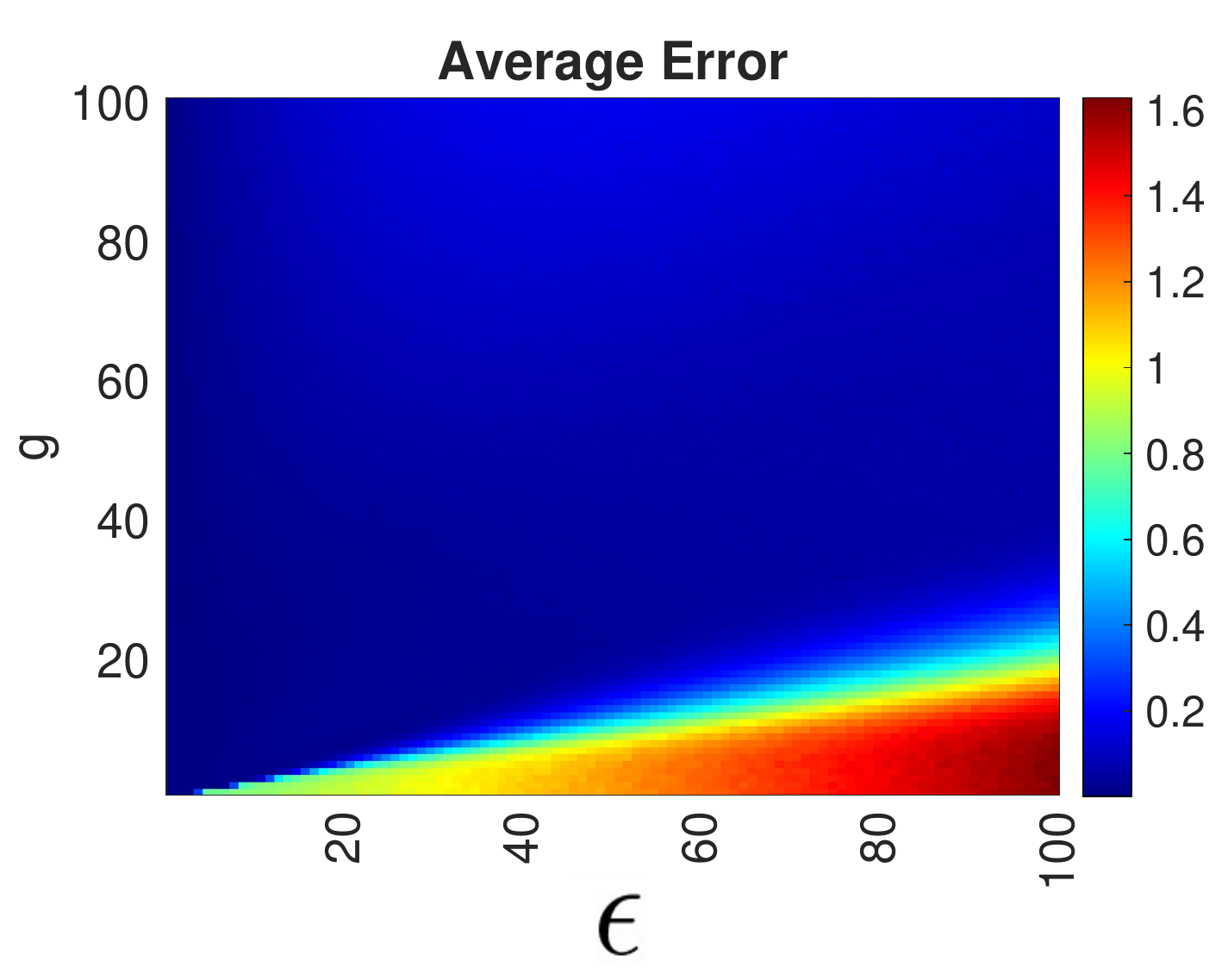}
\includegraphics[scale=0.425]{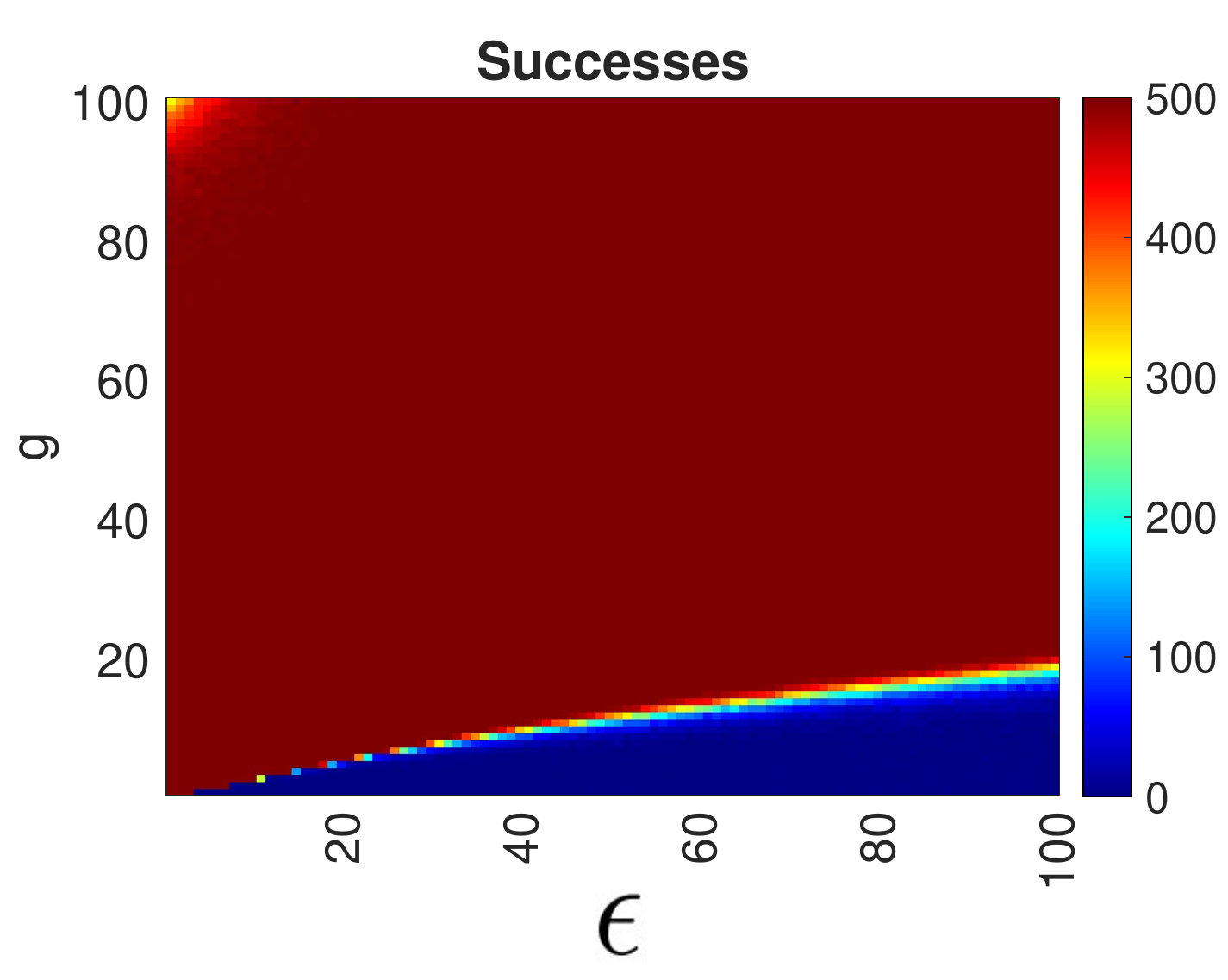}
\caption{For each $(g,\epsilon)$ and for $500$ tries, (left) average error of $R_v(t^*)$ for $t^*$ from Theorem~\ref{thm:Rtbounds} (right) number of approximations $R_v(t^*)$ within $\epsilon$ for $t^*$ from Theorem \ref{thm:Rtbounds}.}
\label{fig:clusterheatmap}
\end{figure}

As indicated by the experiments summarized in Figure \ref{fig:clusterheatmap}, a small gap $g$ and large $\epsilon$ produces the largest errors with the fewest numbers of successes, as expected. Interestingly, a large gap and small $\epsilon$, corresponding to the upper left corner of the figures also impacts the effectiveness of the heuristic. The large gap size means that $w_2,\ldots,w_5$ are all chosen within a small interval $[0,1-g]$, and we suspect that this cluster behaves like the value $\frac{1-g}{2}$ appearing with  high multiplicity. Additionally, the small $\epsilon$ value means it is difficult to achieve a ``success'' by having absolute error less than $\epsilon$.

For these noisy experiments, there are two natural interpretations of what should be considered $N$ and~$\mu$. The first is that $N$ should be $5$, the number of true measurements, whereas $\mu$ should be $1$. The second interpretation is that $N$ should be $20\cdot 5 = 100$, the true length of $v$ while $\mu$ should be $20$, the size of the top cluster. In the $R_v(t)$ case, these distinctions cancel out in the bound provided by Theorem \ref{thm:Rtbounds}. In the~$L_v(t)$ case, however, these interpretations give drastically different bounds when applying Theorem~\ref{thm:Ltbounds}. The later interpretation often yields such enormous $t$ bounds that an application of
that result is not useful.
In Figure~\ref{fig:LtCluster}, we display the results of an experiment using the former interpretation.
It did not happen that the $L_v(t)$ approximation with the interpreted bound
from Theorem~\ref{thm:Ltbounds}
achieved the expected accuracy.
This suggests that the later interpretation, despite its lack of utility, is likely more appropriate. Our experiments also showcase the advantage of using the $R_v(t)$ approximation over $L_v(t)$, especially in noisy situations with high (approximate) multiplicity.

\begin{figure}
\includegraphics[scale=0.5]{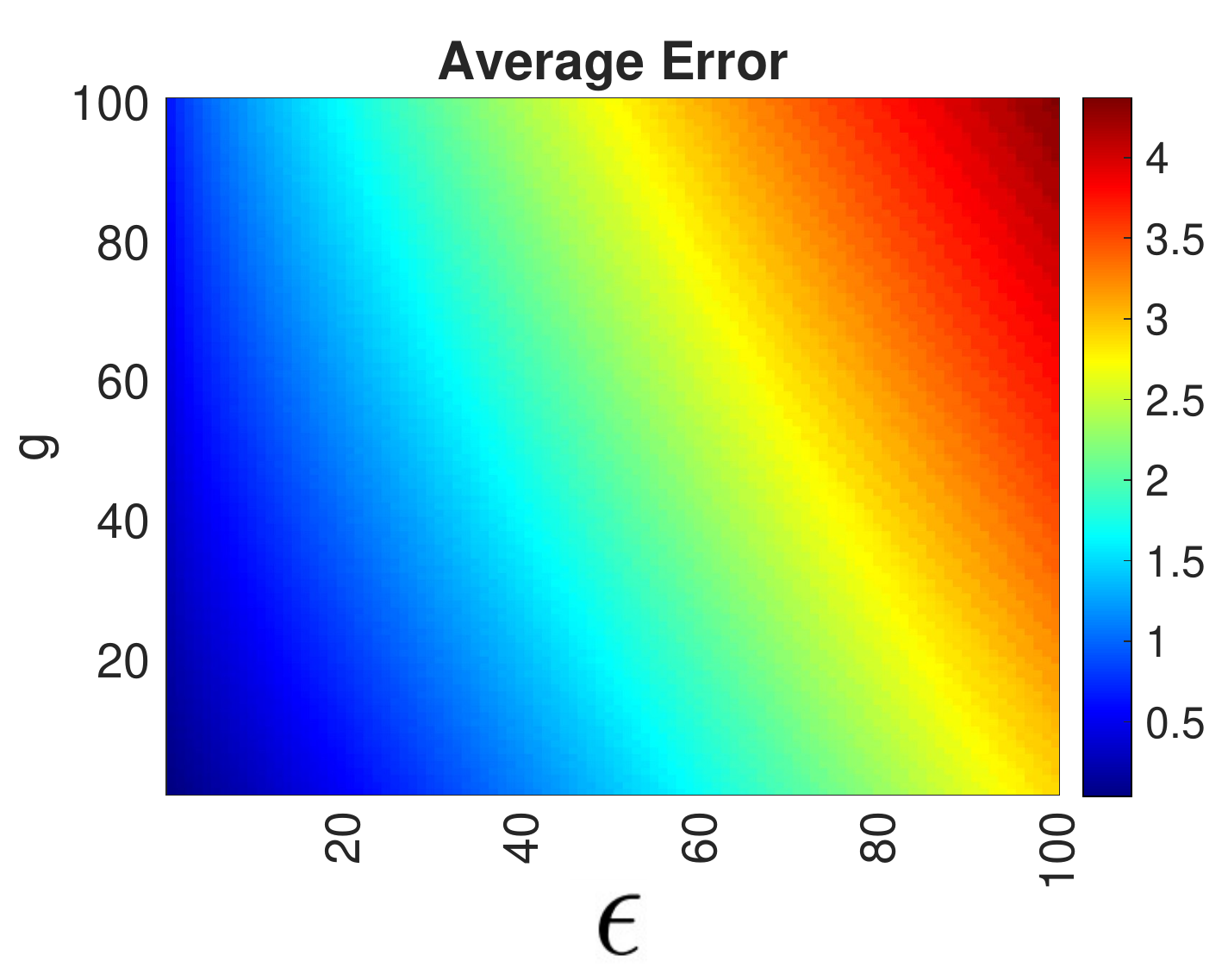}
\caption{For each $(g,\epsilon)$, over $500$ tries, the average error of $L_v(t^*)$ from Theorem~\ref{thm:Ltbounds}.}
\label{fig:LtCluster}
\end{figure}

\section{Max-convolution and applications}
\label{sec:MaxCon}

One way to use smooth approximations of
the maximum function is to approximate
the max-convolution of two vectors~\cite{Serang2015}.
To that end, consider
two integer vectors $a=(a_0,\ldots,a_n)$ and $ b=(b_0,\ldots,b_n)$.
The classical convolution problem asks to determine the vector of \mydef{convolution coefficients} $\mydef{a \star b}$ where \begin{equation}
\label{eq:convolutioncoefficients}(a \star b)_k=\sum_{i=0}^k a_i \cdot b_{k-i}=\sum_{i=\max(0,k-n)}^{\min(k,n)}a_i\cdot b_{k-i}.
\end{equation} We remark that the middle description is sufficient if one takes $a_i=0$ and $b_i=0$ whenever they are undefined. With the same input, the problem of max-convolution, \textbf{MAXCON}, asks for the vector $c$ of \mydef{max-convolution coefficients}, where
\begin{equation}
\label{eq:maxconvolutioncoefficients}\mydef{c_k} = \max_{\max(0,k-n) \leq i \leq \min(k,n)} (a_i+b_{k-i}).
\end{equation}
These coefficients can be obtained via \eqref{eq:convolutioncoefficients} by replacing the operations $(\cdot,+)$ with $(+,\max)$, respectively. The form $c_k=\max_{0 \leq i \leq n}(a_i+b_{k-i})$ may be used if one replaces undefined $a_i$ and $b_i$ with $-\infty$. Equivalently, through constructing $$\mydef{A_t(x)}=\sum_{i=0}^n t^{a_i}x^i, \quad \quad \mydef{B_t(x)} = \sum_{i=0}^n t^{b_i}x^i \in \mathbb{Q}(t)[x],$$ the problem of \textbf{MAXCON} asks for the largest exponents in $t$ appearing in the coefficients of $$A_t(x) \cdot B_t(x) = \sum_{k=0}^{2n} \sum_{i=0}^k t^{a_i+b_{k-i}}x^{k}.$$
Setting $\mydef{v^{(k)}}=((a_i+b_{k-i})~|~ \max(0,k-n)\leq i \leq \min(k,n))$,   we rewrite this as
$$A_t(x)\cdot B_t(x) = \sum_{k=0}^{2n} F_{v^{(k)}}(t)x^k.$$
{For fixed $t$, the values of $F_{v^{(k)}}(t)$ are \emph{classical} convolution coefficients $$((t^{a_0},\ldots,t^{a_n}) \star (t^{b_0},\ldots,t^{b_n}))_k$$ which can be computed with $O(n\log(n))$ operations, e.g., using the fast Fourier transform (FFT) \cite{FastMultiply}. Applying $\log_t$ provides an $O(n\log(n))$ routine for evaluating $L_{v^{(k)}}(t)$, whereby
with Theorem~\ref{thm:Ltbounds}, this process computes $\max(v^{(k)})=c_k$ when evaluated at a sufficiently large value of $t$. We summarize this discussion in the following
quasi-linear time algorithm.}

\begin{algorithm}[!htpb]
\SetKwIF{If}{ElseIf}{Else}{if}{then}{elif}{else}{}%
\DontPrintSemicolon
\SetKwProg{MaxCon}{MaxCon}{}{}
\LinesNotNumbered
\KwIn{
 Two integer vectors $a=(a_0,\ldots,a_n)$ and $b=(b_0,\ldots,b_m)$.
}
\KwOut{The max-convolution coefficients $c=(c_0,\ldots,c_{n+m})$}
\nl Choose $t^*$ satisfying the bounds of Theorem \ref{thm:Ltbounds} (e.g., $t^*\geq\max(n,m)$+1) \;
\nl Compute $\alpha = ((t^{*})^{a_0},\ldots,(t^{*})^{a_n})$ and $\beta = ((t^{*})^{b_0},\ldots,(t^{*})^{b_m})$\;
 \nl Compute $\ell(t^*)=\alpha \star \beta$\;
 \nl Apply $\lfloor \log_{t^*}(\cdot) \rfloor$ component-wise to $\ell(t^*)$ to obtain $c=\lfloor\log_{t^*}(\ell(t^*))\rfloor$\;
 \nl \Return{} $c$\;
\caption{MaxCon \label{algo:MaxConL}}
\end{algorithm}

\begin{example}\label{ex:MaxCon1}
Consider applying
Algorithm \ref{algo:MaxConL} to compute the max-convolution coefficients of the vectors $$a=(3,1,2,4,1,2), \quad \quad b=(5,3,0,4).$$ Taking $t^*=6$, we obtain
$$\alpha = (216,6,36,1296,6,36), \quad \quad \beta = (7776,216,1,1296)$$
The classical convolution of $\alpha$ and $\beta$ is
{\footnotesize{$${\ell(6)=\alpha \star \beta = (1679620, 93312, 281448, 10365400, 334404, 329184, 1687400, 7812, 46656
)}
$$}}
which under $\log_6$ evaluates to
{\footnotesize{
 $$\log_6(\ell(6)) = (8, 6.38685, 7.00301, 9.01571, 7.09923, 7.09045, 8.00258, 5.00258, 6)
.$$}}
 Finally, by applying $\lfloor \cdot \rfloor$, we obtain the max-convolution coefficents
 $$c = (8,6,7,9,7,7,8,5,6).$$

For completeness and interpretation of $c$, we provide $A_t(x),B_t(x) \in \Z[t][x]$ below along with their product:
$$A_t(x) = t^3x^0+t^1x^1+t^2x^2+t^3x^3+t^1x^4+t^2x^5, \quad \quad B_t(x) = t^5x^0+t^3x^1+t^0x^2+t^4x^3,$$
\begin{align*}
A_t(x)\cdot B_t(x) =
t^8x^0+2t^6x^1+(t^7+t^4+t^3)x^2+(t^9+t^7+t^5+t)x^3+(t^7+t^6+t^5+t^2)&x^4 \\ +~(t^7+t^6+2t^4)x^5+(t^8+t^5+t)x^6+(t^5+t^2)x^7+t^6&x^8.\end{align*}
\end{example}

The key to Algorithm \ref{algo:MaxConL} lies in the ability to evaluate $F_{v^{(k)}}(t)$ via the fast Fourier transform in $O(n\log(n))$ by interpreting these values as classical convolution coefficients. A subsequent application of $\log_t$ turns this into an evaluation of $L_{v^{(k)}}(t)=\log_t(F_{v^{(k)}}(t))$ whereby one may apply Theorem \ref{thm:Ltbounds}. Similarly, with the aim to apply Theorem \ref{thm:Dtbounds}, one may use the approximation of Proposition \ref{prop:ApproxR} to wrap the ability to evaluate~$F_{v^{(k)}}(t)$ into an algorithm which may require a smaller $t$ evaluation.
\vspace{0.2 in}

\begin{algorithm}[H]
\SetKwIF{If}{ElseIf}{Else}{if}{then}{elif}{else}{}%
\DontPrintSemicolon
\SetKwProg{MaxCon}{MaxCon}{}{}
\LinesNotNumbered
\KwIn{
 Two integer vectors $a=(a_0,\ldots,a_n)$ and $b=(b_0,\ldots,b_m)$.
}
\KwOut{The max-convolution coefficients $c=(c_0,\ldots,c_{n+m})$}
\nl Choose $t^*$ and $\alpha^*$ satisfying the bounds of Theorem \ref{thm:Dtbounds} (e.g., $\alpha^* > 1$ and $t^*>\textrm{max}(e,n-1,m-1)$)  \;
\nl Compute $\alpha = ((t^{*})^{a_0},\ldots,(t^{*})^{a_n})$ and $\beta = ((t^{*})^{b_0},\ldots,(t^{*})^{b_m})$ \;
\nl Compute $\alpha' = ((\alpha^* t^{*})^{a_0},\ldots,(\alpha^* t^{*})^{a_n})$ and $\beta' = ((\alpha^* t^{*})^{b_0},\ldots,(\alpha^* t^{*})^{b_m})$\;
 \nl Compute $\ell(t^*)=\alpha \star \beta$ \;
 \nl Compute $\ell'(t^*)=\alpha' \star \beta'$ using FFT\;
 \nl Apply $\log_{\alpha^*}\left(\ell'(t^*)/\ell(t^*)\right)$ component-wise to obtain $c = \lceil \log_{\alpha^*}\left(\ell'(t^*)/\ell(t^*)\right)\rceil$\;
 \nl \Return{} $c$\;
\caption{MaxCon - using $D_{v^{(k)}}(t^*,\alpha^*)$ \label{algo:MaxConD}}
\end{algorithm}

\begin{remark}
Algorithms \ref{algo:MaxConL} and \ref{algo:MaxConD}, paired with their corresponding bounds from Section \ref{sec:Approximations}, give algorithms whose output constitute mathematical proofs provided that the convolution coefficients computed via FFT are exact. Otherwise, the error introduced by the $\star$ operation must be bounded by $1/2$, $\delta$ should be taken to be at most $1/2$ in the relevant theorems, and a two-sided rounding procedure should be applied rather than the ceiling or floor function.
\end{remark}

\begin{example}

We apply Algorithm \ref{algo:MaxConD} to the
vectors in Example~\ref{ex:MaxCon1}
using $t^*=6$ and $\alpha^* = e$.
The values of $\ell(6)$ and $\ell'(6) = \ell(e\cdot 6)$ are
{\tiny{
\begin{align*}
\ell(6) & = (1679620, 93312, 281448, 10365400, 334404, 329184, 1687400, 7812, 46656
),\\
\ell'(6)& = (  5006864730.36308,
    37644747.57839,
   307062197.51625,
81968557661.46344,\\
  & \quad \quad   326963800.35823,
   325950992.03191,
  5008018807.39795,
     1154326.73120,
    18822373.78920)
\end{align*}
}}
so that $c=\lceil \log(\ell'(6)/\ell(6))\rceil$ gives
$$c = \lceil (8.0,
 6.0,
 6.99486,
 8.97562,
 6.88525,
 6.89789,
 7.99561,
 4.99561,
 6.0)\rceil =
 (8, 6, 7, 9, 7, 7, 8, 5, 6).
 $$
We remark that if we use $t^*=2$ and $\alpha^*= 1.05$, which do not necessarily meet the bounds of Theorem~\ref{thm:Dtbounds}, we
obtain the following results:
{\small{
\begin{align*}
\ell(2) &= (256,
 128,
 152,
 674,
 228,
 224,
 290,
  36,
  64), \\
  \ell'(2) &= \ell(1.05 \cdot 2) =   (378.22859,
  171.53224,
  208.81795,
 1017.32991,
  311.12599,\\
  & \quad \quad  \quad \quad \quad \quad  \quad \,\,
  304.77118,
  421.16960,
   45.25101,
   85.76612), \\
   \log_{1.05}(\ell'(2)&/\ell(2)) = ( 8.0,
 6.0,
 6.50915,
 8.43831,
 6.37121,
 6.31101,
 7.64815,
 4.68754,
 6.0).
\end{align*}
}}
A final application of $\lceil \cdot \rceil$ obtains the correct integers $c$ for the max-convolution coefficients of $a$ and $b$.
\end{example}

We conclude with two applications
of max-convolution.

\subsection{Maximum Consecutive Subsums Problem} Given a single vector $v = (v_1,\ldots,v_n)\in\mathbb{R}^n$, the problem of determining the largest consecutive sum $\sum_{i=1}^k v_{j_k+i}$
for each $k=1,\dots,n$
is known as the \emph{Maximum Consecutive Subsums Problem} (\textbf{MCSP}).
As outlined in \cite[\S~7.1]{CMWW}, \textbf{MCSP} directly reduces to an instance of \textbf{MAXCON} as follows. Taking $a,b \in \mathbb{R}^n$ to be $a_k = -\sum_{i=1}^k v_i$ and $b_{n-k} = \sum_{i=1}^k v_i$, the max-convolution coefficient $c_{n-k}$ describes the largest sum of $k$ consecutive entries of $v$.
\begin{example}
For $v=(1, 4, 2, 3, 8, 1, 1, 5, 6, 7, 5)\in\mathbb{Z}^{11}$, we have
\begin{align*}a&=(-1, -5, -7, -10, -18, -19, -20, -25, -31, -38, -43), \\ b&=(43, 38, 31, 25, 20, 19, 18, 10, 7, 5, 1).
\end{align*}
The max-convolution of $a$ and $b$ is
$$c = (42, 38, 36, 33, 28, 24, 23, 18, 13, 8, 0, -1, -2, \ldots).$$
For example, this shows that the largest sum
 of $2$ and $5$ consecutive entries of $v$ is $13$ and $24$ obtained by $6+7$ and $1+5+6+7+5$, respectively.
By convention, one may choose to prepend $c_0=\sum_{i=1}^n v_i$ to $c$ so as to include the subsum of $n$ consecutive integers in the output as well.
\end{example}

We remark that even though Algorithms \ref{algo:MaxConL} and \ref{algo:MaxConD} are written for integer input, the algorithms work for floating point input as well, subject to different bounds (see Theorems \ref{thm:Ltbounds} and \ref{thm:Dtbounds}). When using Algorithm~\ref{algo:MaxConL}, the output is an upper bound for the true max-convolution coefficients, subject to any error introduced by the FFT subroutine. Figure \ref{fig:MCSP} shows the magnitude of error on the $100$ outputs of a random MCSP problem on a vector $v\in[0,1]^{n}$
for $n=100$ and $n=1000$
with each coordinate selected uniformly
at random.

\begin{figure}
\includegraphics[scale=0.45]{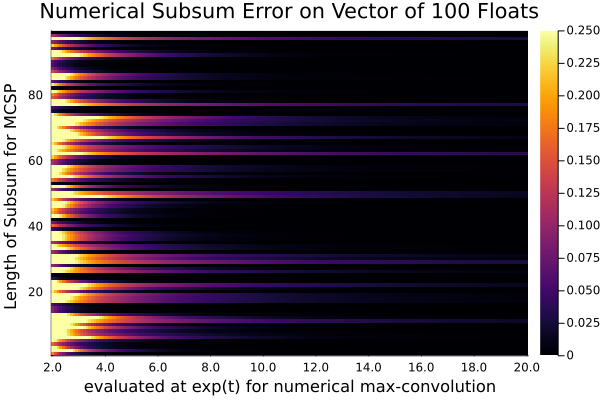}
\includegraphics[scale=0.45]{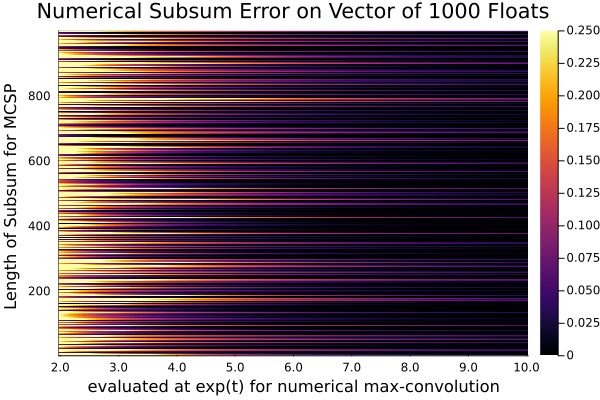}
\caption{Heatmap of the absolute errors of the numerical computation of \textbf{MCSP}. The numerical computations were performed on a vector of length $100$ (top) and $1000$ (bottom) with floating-point entries uniformly chosen in $[0,1]$. The vertical axis indicates the output error on subsums of length $k$ after evaluating at $\textrm{exp}(t)$~(horizontal~axis).
}
\label{fig:MCSP}
\end{figure}

\subsection{Service Curve Constraints}
Convolution algorithms are integral in network calculus where systems model the data flow between networks~\cite{NetworkCalc}. The incoming data is described by a monotonic input function~$R(T)$, given in bits per second. The outgoing data (after a time delay) is described by the output function,~$R^*(T)$, also in bits per second.
The function $R^*(T)$ is constrained by \textit{service} constraints that state for any window of time, additional data outputted is bounded.
The curves formed by these constraints  are the result of a min-convolution between the service curve and input function $R(T)$~\cite{NetworkCalc}.
That is, a system with an input function $R(T)$ has an output function $R^*(T)$ that will lie in the area bounded below by a service curve $\beta(T)$ and above by a maximum service curve $\gamma(T)$ such that
\begin{equation}
 \inf_{s\leq T}\{ R(T) + \beta(T-s)\} \leq R^*(T) \leq R(s) + \gamma(T-s), \hspace{5mm} s\leq T
\label{eq:serviceConstr}
\end{equation}
Note that the input and output functions admit no subscript to avoid confusion with the ratio function~$R_v(t)$. Additionally, we define the time variable for the service curve to be $T$ rather than $t$ which is the variable base used for the \textbf{MAXCON} algorithms.

Although the prior focus was on
the maximum, it is very simple to
reformulate everything to instead compute the minimum. That is,
we take ${t\rightarrow \infty}$
when converging to the maximum
while one can take ${t\rightarrow 0^+}$ to converge to the minimum.

As an example, we sought to recreate~\cite[Fig.~5.1]{NetworkCalc} using Algorithm~\ref{algo:MaxConD} to compute the discrete min-convolution of the input function and service curves.
To complete this recreation, we first
fit a polynomial curve to $R(T)$
from that plot.  In particular,
we used the fitted sextic polynomial
\[
R(T) = 1.6738T-0.7492T^2-0.08694T^3+0.1085T^4
-0.01101T^5-0.001579T^6+0.0002085T^7.
\]
The service curves are defined as $\beta(T) = T$ and $\gamma(T) = \begin{dcases}
0 & \hbox{if~} T \leq 3\\
T-3 & \hbox{if~} T > 3,
\end{dcases}$
which corresponds with a 3 second time delay.
To create a discrete problem,
we evaluated these functions at equally spaced points.

\begin{figure}[htb]
\includegraphics[scale=0.63]{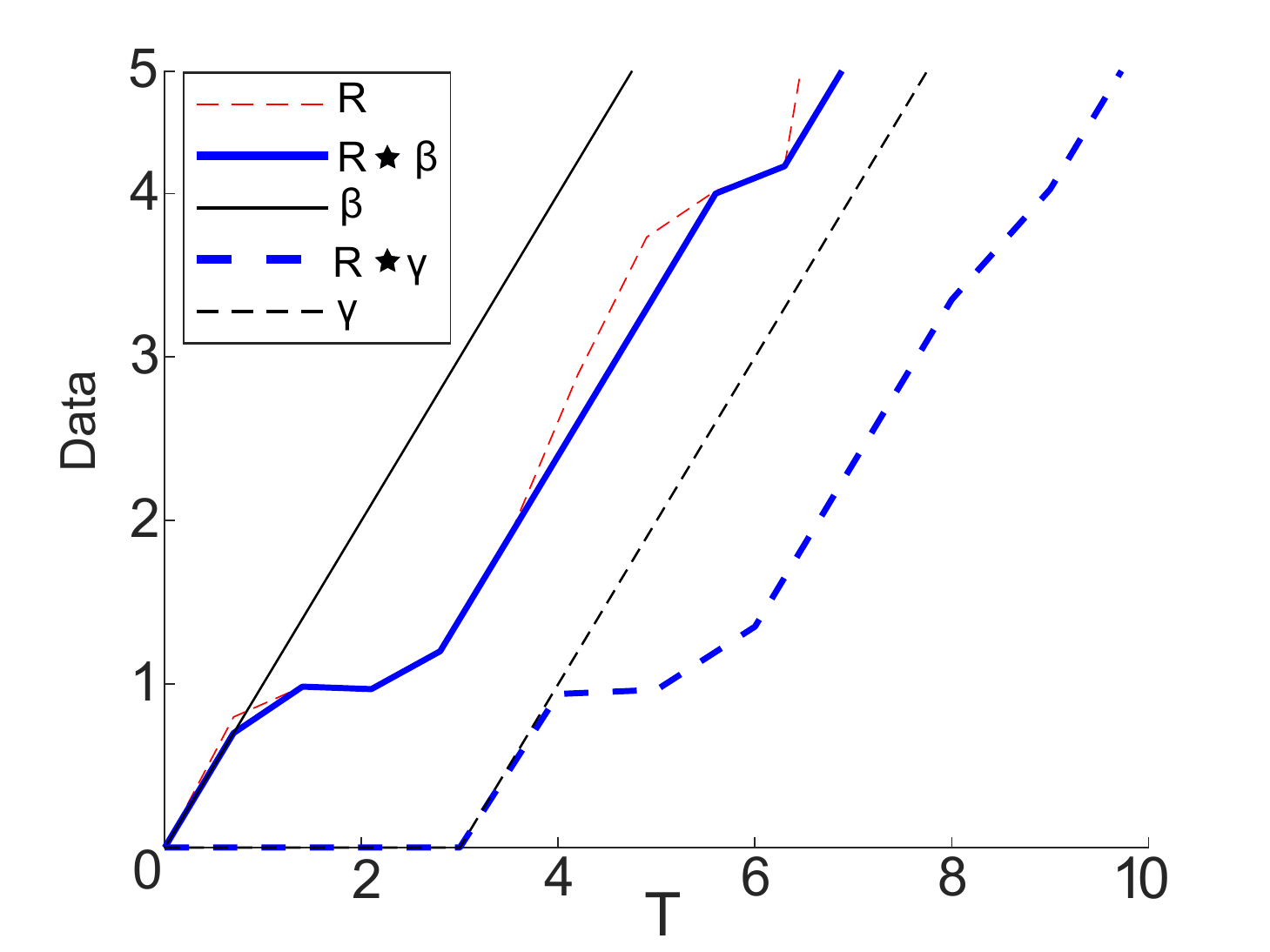}
\includegraphics[scale=0.45]{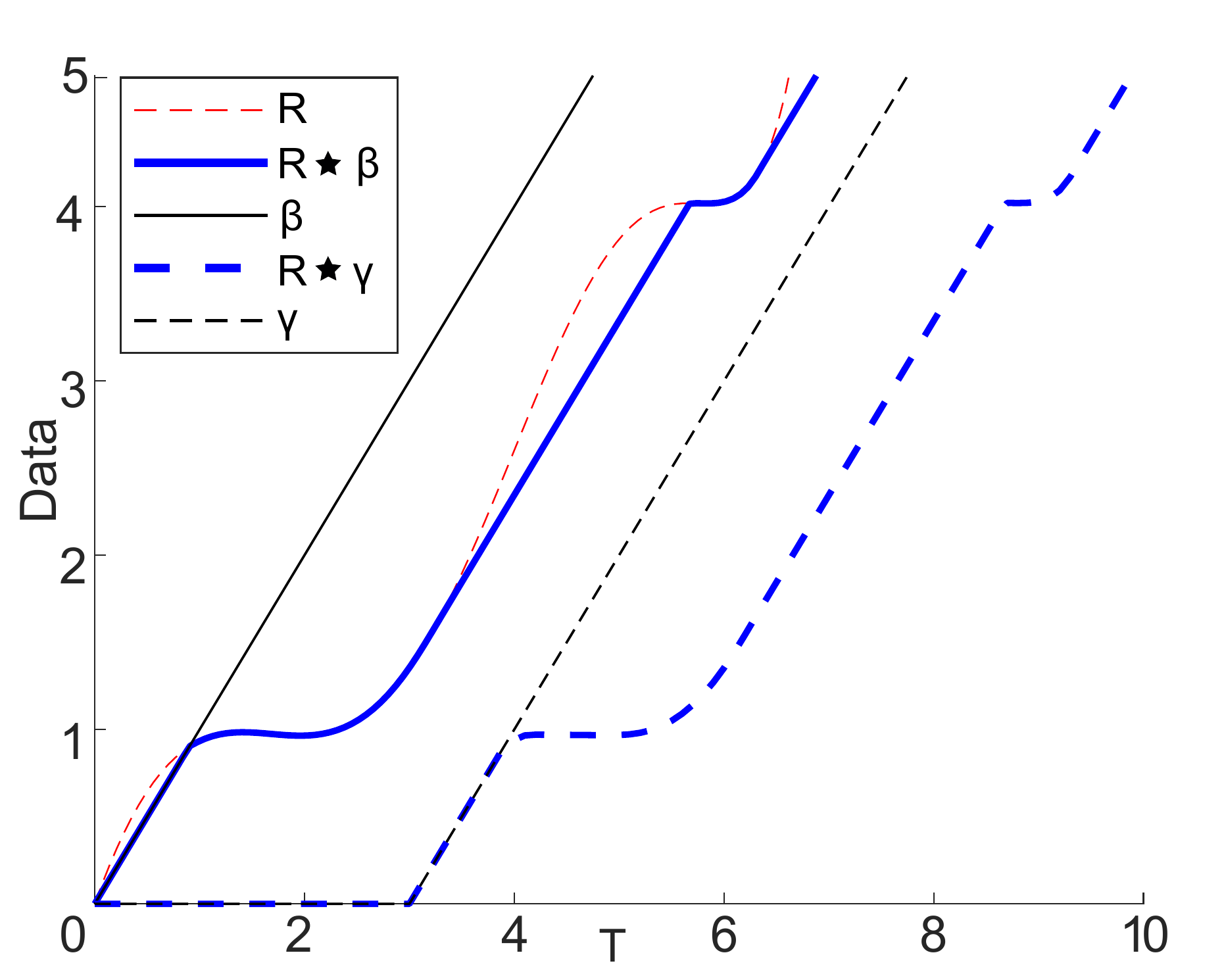}
\caption{Discretized service curves computed using Algorithm~\ref{algo:MaxConD}.
The curve $R(T)$ and service constraints $\gamma(T)$ and $\beta(T)$ are discretized into 10 (top) and 100 (bottom) equally spaced points.}
\label{fig:ServiceCurveEx}
\end{figure}

We apply Algorithm~\ref{algo:MaxConD} in the floating-point case, i.e., without rounding, for the computations
with $\alpha = 1.01$ and $t = \left(\frac{1}{(n-1)}\right)^{\frac{1}{0.04}}$.
Figure~\ref{fig:ServiceCurveEx} shows the results of our min-convolution using 10 and 100 discretized points
to compute the corresponding 
bounds on $R^*(T)$ given in~\eqref{eq:serviceConstr}.

\begin{figure}[htpb]
\includegraphics[width=0.49\textwidth]{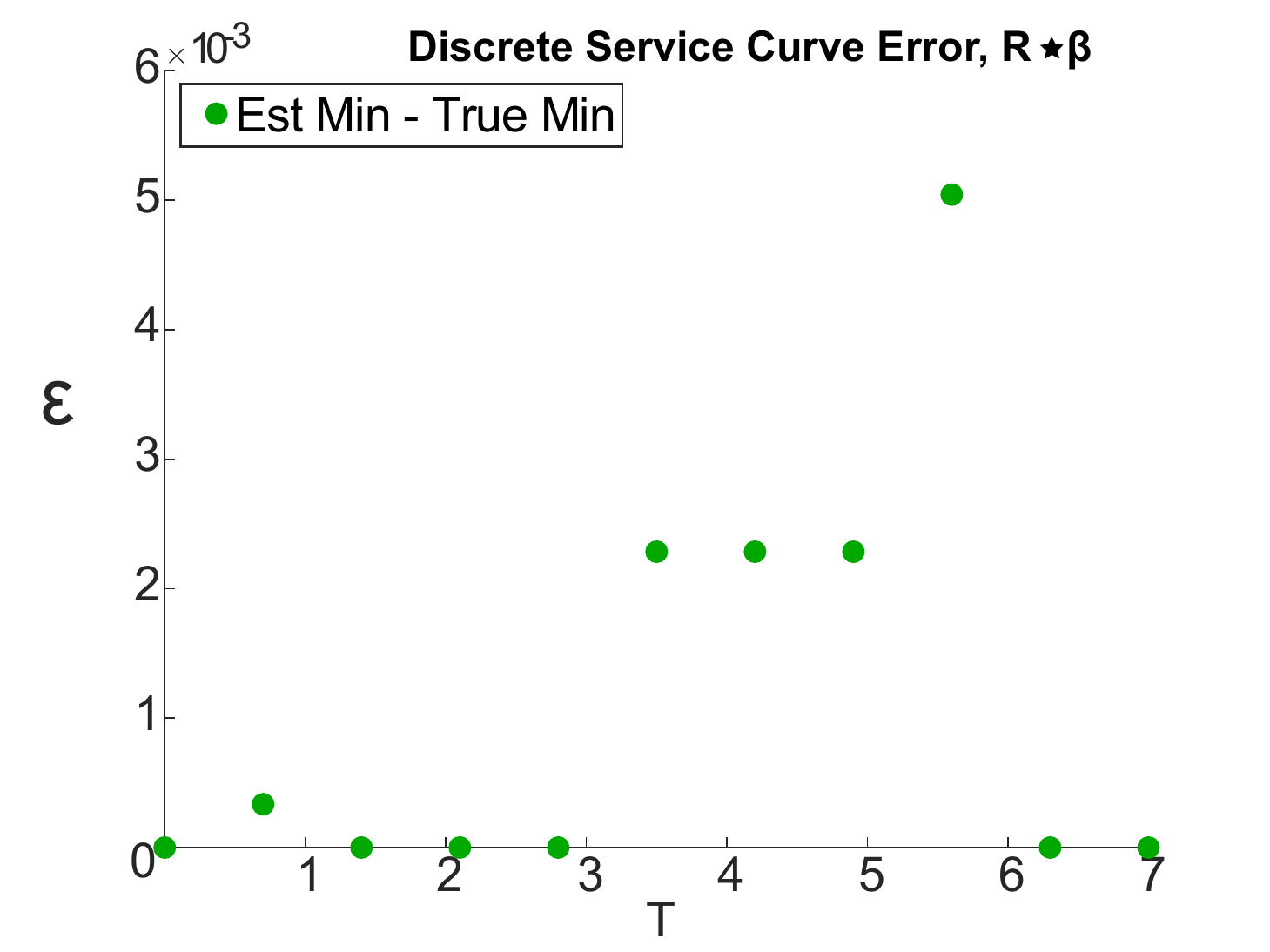}
\includegraphics[width=0.49\textwidth]{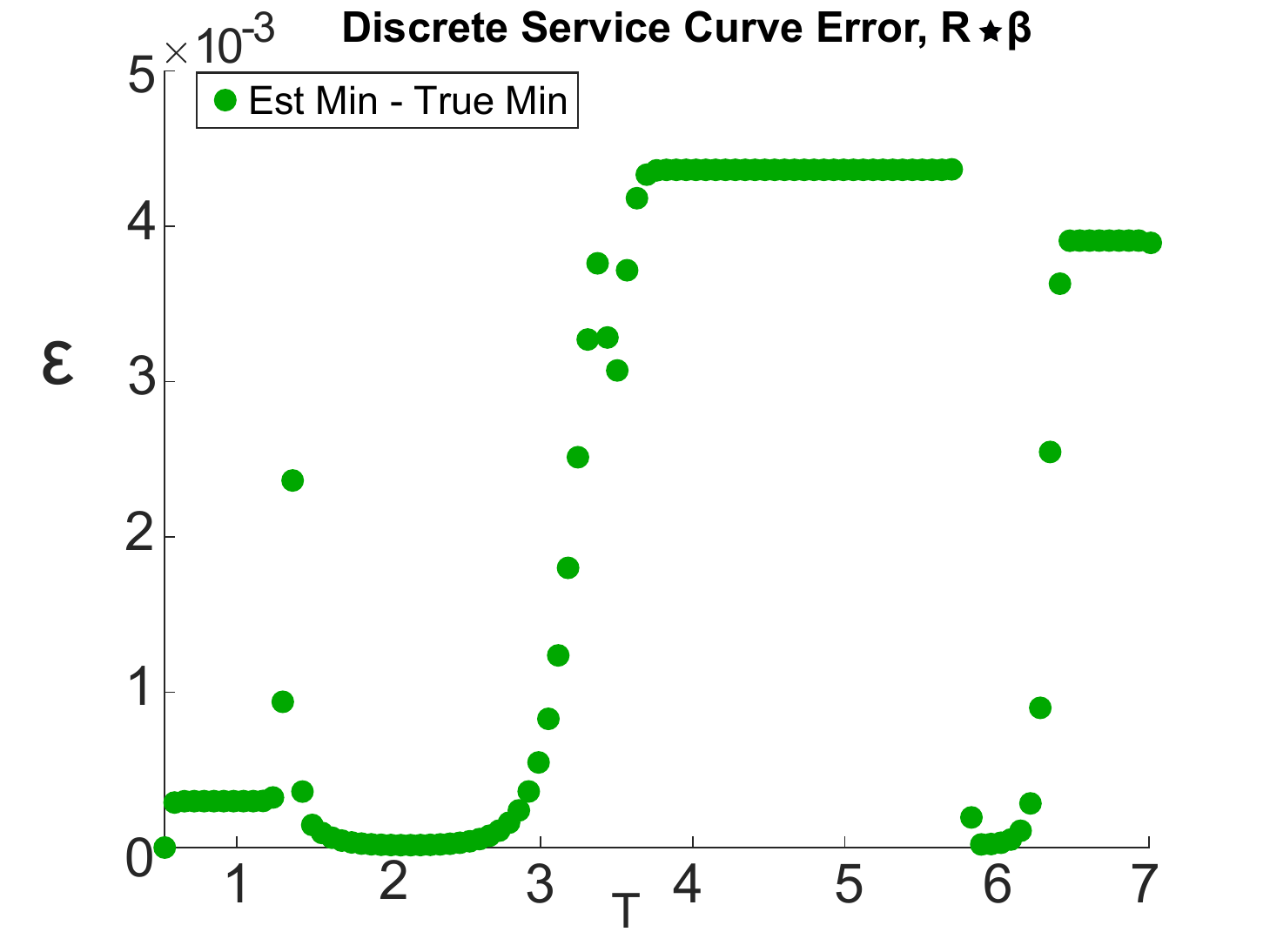}
\caption{Plots error of the computed points minus the actual points of the minimum convolution between the lines $R(T)$ and $\beta(T)$ for (left) 10 and (right) 100 discretized points.}
\label{fig:BetaErr}
\end{figure}

\begin{figure}[htbp]
\includegraphics[width=0.49\textwidth]{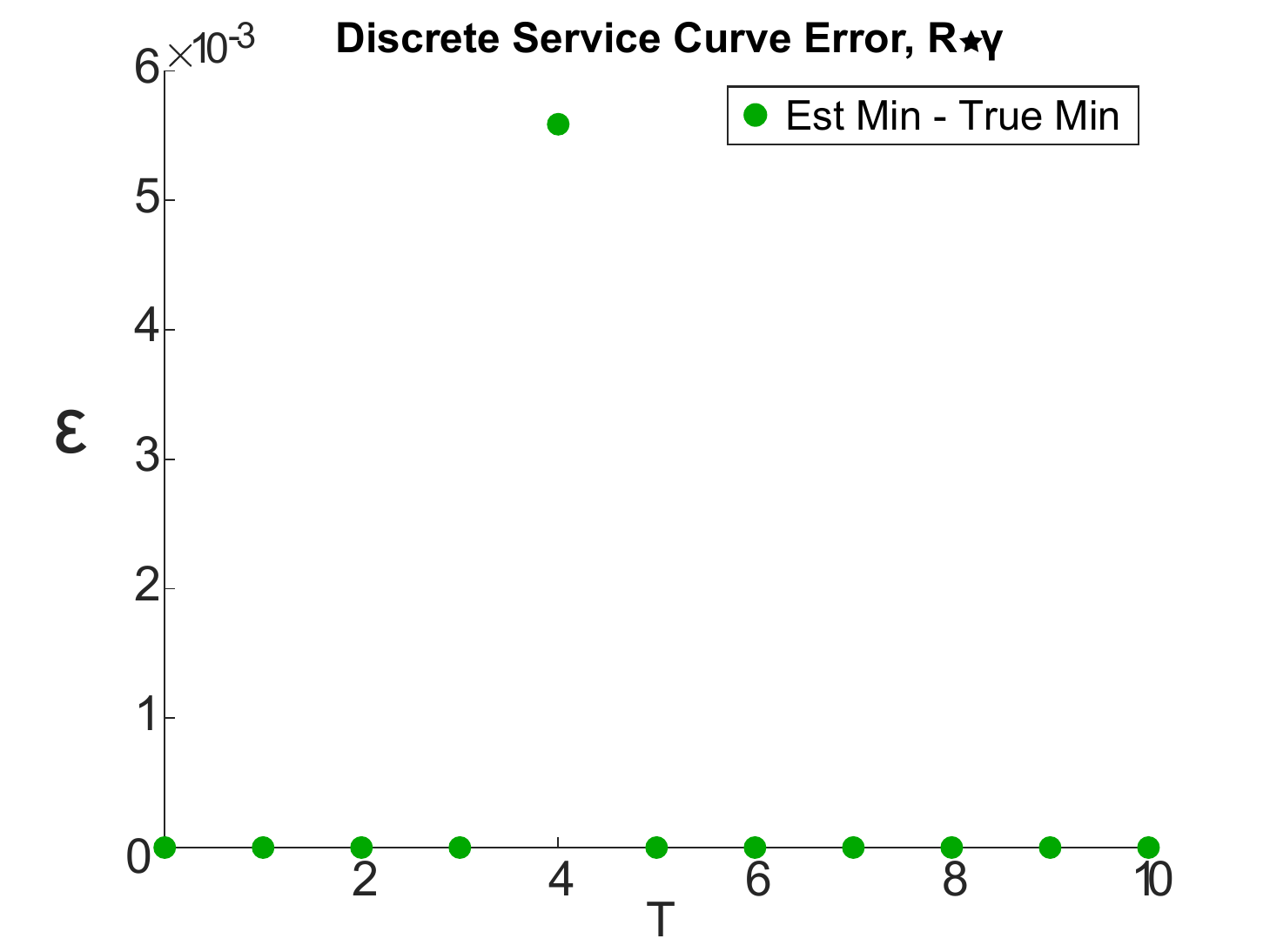}
\includegraphics[width=0.49\textwidth]{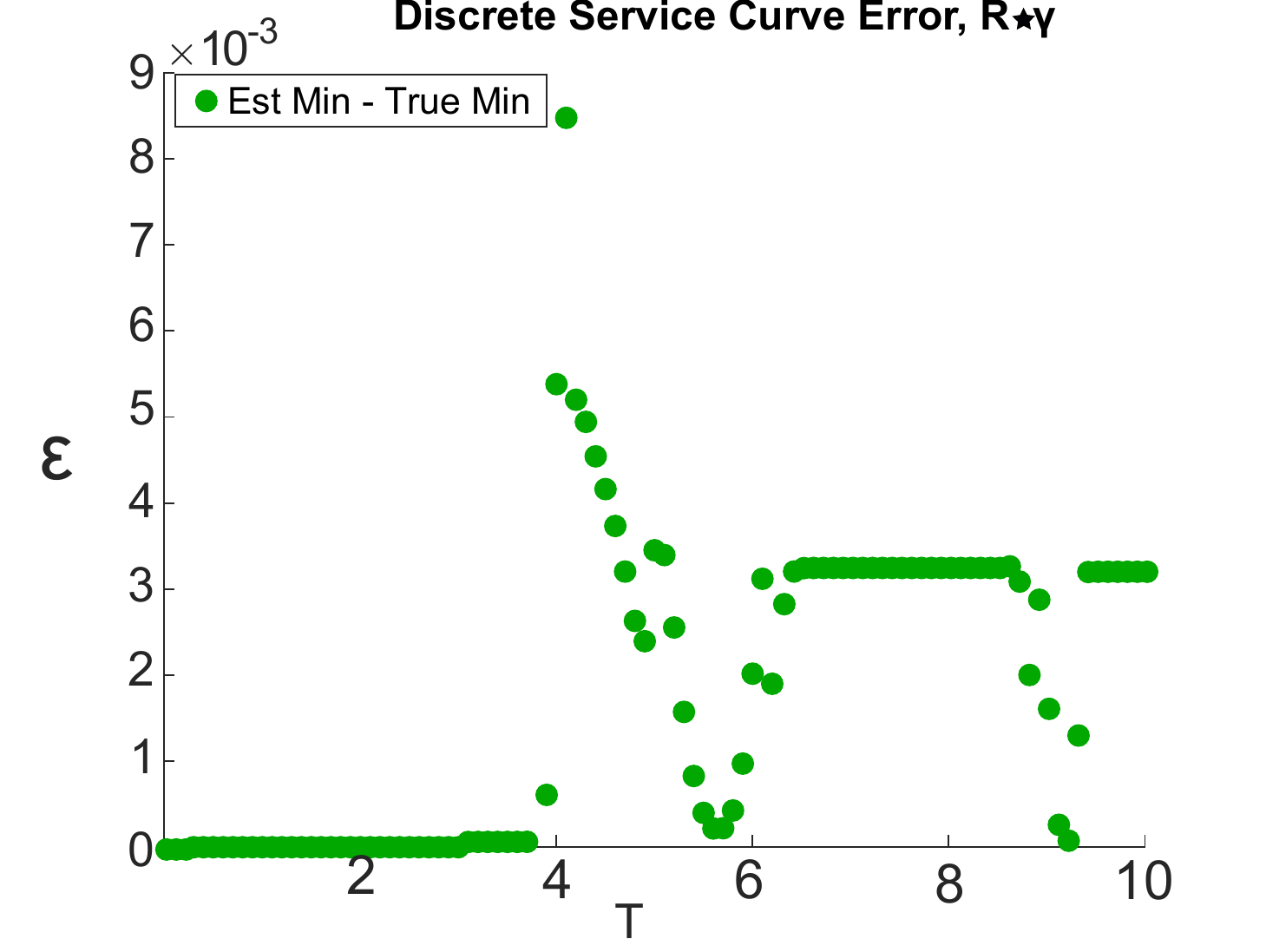}
\caption{Plots error of the computed points minus the actual points of the minimum convolution between the lines $R(T)$ and $\gamma(T)$ for (left) 10 and (right) 100 discretized points.}
\label{fig:GammaErr}
\end{figure}

One can see that even with numerical discretion, the resulting curves in Figure~\ref{fig:ServiceCurveEx} exhibit satisfactory behavior in recreating the bounds of \cite[Fig.~5.1]{NetworkCalc}. Even for 10 discretization points, the computation captures the essential behavior of the convolution.  Furthermore,
using 100 discretization points 
better captures sharp transitions
as well as flatter regions 
of the service curve bounds.

Note that this experiment employs two separate convolutions. We compared our estimated values to the actual minimums computed via brute force, i.e., 
we computed the exact minima 
by computing the bounds given in Equation~\ref{eq:serviceConstr} for each $T_i$, $i = 1, \dots, N$ where N is the number of discretized points. Figures~\ref{fig:BetaErr}
and~\ref{fig:GammaErr} display the error between the computed points via Algorithm~\ref{algo:MaxConD} 
and the actual points for the min-convolution between $R(T)$ and $\beta(T)$,
and $R(T)$ and $\gamma(T)$, respectively.
When using a time delay of $3$ seconds,
non-zero values first occur when $T>3$
resulting in nearly zero error before then.
Both discretizations have errors 
on the order of $10^{-3}$ or smaller. Additionally, the errors are nonnegative which highlights that our method slightly overestimates the values.

\begin{table}[!htbp]
\begin{center}
\begin{tabular}{|d{5.2}|d{4.7}|d{4.7}|}
\hline
\multicolumn{1}{|c|}{Number of Discrete Points} & \multicolumn{1}{|c|}{$D_{v}(t,\alpha)$)} & \multicolumn{1}{|c|}{Brute Force}\\
\hline
10 & 0.0003841 &  0.002666\\
50 & 0.0003392 & 0.006488\\
100 & 0.0005358 & 0.01067\\
500 & 0.001053 & 0.04643\\
1000 & 0.001647 & 0.1114\\
5000 & 0.01183 &  1.3957\\
10000 &  0.02939 &  10.5240\\
100000 & 1.1738 & 1546.2364\\
\hline
\end{tabular}
\caption{Time, in seconds, to calculate the min-convolution between $R(T)$ and $\beta(T)$.}
\label{tb:convTimesBeta}
\end{center}
\end{table}

\begin{table}[!htbp]
\begin{center}
\begin{tabular}{|d{5.2}|d{4.7}|d{4.7}|}
\hline
\multicolumn{1}{|c|}{Number of Discrete Points} & \multicolumn{1}{|c|}{$D_{v}(t,\alpha)$} & \multicolumn{1}{|c|}{Brute Force}\\
\hline
10 & 0.0001468 &  0.002490\\
50 & 0.0001085 & 0.005987\\
100 & 0.0001927 & 0.007235\\
500 & 0.0007843 & 0.03623\\
1000 & 0.001421 & 0.1315\\
5000 & 0.006734 &   1.5642\\
10000 &  0.01953 &  5.2610\\
100000 &  0.7965 & 1239.4949\\
\hline
\end{tabular}
\caption{Time, in seconds, to calculate the min-convolution between $R(T)$ and $\gamma(T)$.}\label{tb:convTimesGamma}
\end{center}
\end{table}

Tables~\ref{tb:convTimesBeta} 
and~\ref{tb:convTimesGamma}
compare the time in seconds for one application of Algorithm~\ref{algo:MaxConD} and a brute force computation between $R(T)$ and $\beta(T)$,
and $R(T)$ and $\gamma(T)$, respectively,
using a single processor.
The difference between quasi-linear and
quadratic time algorithms 
becomes apparent as $n$ grows.
Note that utilizing Algorithm~\ref{algo:MaxConL} produced
similar error and computational time.
\newpage

\bibliographystyle{abbrv}
\bibliography{ref}

\begin{thebibliography}{10}

\bibitem{Softmax1}
K.~Asadi and M.~L. Littman.
\newblock An alternative softmax operator for reinforcement learning.
\newblock In {\em Proceedings of the 34th International Conference on Machine
  Learning - Volume 70}, ICML'17, page 243–252. JMLR.org, 2017.

\bibitem{Bergman}
G.~Bergman.
\newblock The logarithmic limit-set of an algebraic variety.
\newblock {\em Trans. Am. Math. Soc.}, 157:459--469, 1971.

\bibitem{BHH}
P.~Blanchard, D.~J. Higham, and N.~J. Higham.
\newblock {Accurately computing the log-sum-exp and softmax functions}.
\newblock {\em IMA Journal of Numerical Analysis}, 41(4):2311--2330, 08 2020.

\bibitem{CMWW}
M.~Cygan, M.~Mucha, K.~Wundefinedgrzycki, and M.~W\l{}odarczyk.
\newblock On problems equivalent to (min,+)-convolution.
\newblock {\em ACM Trans. Algorithms}, 15(1), 2019.

\bibitem{DeWolff13}
T.~DeWolff and F.~Schroeter.
\newblock The boundary of amoebas.
\newblock {\em arXiv Preprint: 1310.7363}, 2013.

\bibitem{FPT2000}
M.~Forsberg, M.~Passare, and A.~Tsikh.
\newblock Laurent determinants and arrangements of hyperplane amoebas.
\newblock {\em Adv. Math}, 151:45--70, 2000.

\bibitem{Softmax2}
W.~Liu, Y.~Wen, Z.~Yu, and M.~Yang.
\newblock Large-margin softmax loss for convolutional neural networks.
\newblock In M.~Balcan and K.~Weinberger, editors, {\em INTERNATIONAL
  CONFERENCE ON MACHINE LEARNING, VOL 48}, volume~48 of {\em Proceedings of
  Machine Learning Research}, 2016.
\newblock 33rd International Conference on Machine Learning, New York, NY, JUN
  20-22, 2016.

\bibitem{MacSturm}
D.~Maclagan and B.~Sturmfels.
\newblock {\em Introduction to {T}ropical {G}eometry}, volume 161 of {\em
  Graduate Studies in Mathematics}.
\newblock American Mathematical Society, Providence, RI, 2015.

\bibitem{FastMultiply}
R.~T. Moenck.
\newblock Practical fast polynomial multiplication.
\newblock In {\em Proceedings of the Third ACM Symposium on Symbolic and
  Algebraic Computation}, SYMSAC '76, page 136–148, New York, NY, USA, 1976.
  Association for Computing Machinery.

\bibitem{NielsenSun}
F.~Nielsen and K.~Sun.
\newblock Guaranteed bounds on information-theoretic measures of univariate
  mixtures using piecewise log-sum-exp inequalities.
\newblock {\em Entropy}, 18(12), 2016.

\bibitem{PfeufferSerang2016}
J.~Pfeuffer and O.~Serang.
\newblock A bounded p-norm approximation of max-convolution for sub-quadratic
  bayesian inference on additive factors.
\newblock {\em J. Mach. Learn. Res.}, 17:36:1--36:39, 2016.

\bibitem{Serang2015}
O.~Serang.
\newblock A fast numerical method for max-convolution and the application to
  efficient max-product inference in bayesian networks.
\newblock {\em Journal of computational biology : a journal of computational
  molecular cell biology}, 22, 01 2015.

\bibitem{TrapezoidRule}
L.~N. Trefethen and J.~A.~C. Weideman.
\newblock The exponentially convergent trapezoidal rule.
\newblock {\em SIAM Review}, 56(3):385--458, 2014.

\bibitem{NetworkCalc}
A.~Van~Bemten and W.~Kellerer.
\newblock Network calculus: A comprehensive guide, 10 2016.

\bibitem{SmoothOpt}
I.~Zang.
\newblock A smoothing-out technique for min-max optimization.
\newblock {\em Math. Programming}, 19(1):61--77, 1980.

\bibitem{SmoothOpt2}
G.~Zhao, Z.~Wang, and H.~Mou.
\newblock Uniform approximation of min/max functions by smooth splines.
\newblock {\em Journal of Computational and Applied Mathematics},
  236(5):699--703, 2011.
\newblock The 7th International Conference on Scientific Computing and
  Applications, June 13–16, 2010, Dalian, China.

\end{thebibliography}

\end{document}